\documentclass{article}

\RequirePackage{amsthm,amsmath,amsfonts,amssymb}
\RequirePackage[authoryear]{natbib}
\RequirePackage{graphicx}

\usepackage[usenames,dvipsnames]{xcolor}
\definecolor{mygreen}{rgb}{.02,.60,.30}

\usepackage{hyperref}
\hypersetup{pdfpagemode=FullScreen,  
	colorlinks=true,
	citecolor=Bittersweet,
	linkcolor=Bittersweet,
	urlcolor=Bittersweet
}

\usepackage{listings}
\numberwithin{equation}{section}
\theoremstyle{plain}

\newtheorem{thm}{Theorem}[section]
\newtheorem{lem}{Lemma}[section]
\newtheorem{prop}{Proposition}[section]
\newtheorem{co}{Corollary}[section]
\theoremstyle{remark}

\newcommand{\Epred}{E_n^{\mathrm{pred}}}

\begin{document}

\title{Stopping on the last success with unknown odds: asymptotic minimax optimality of the plug-in rule}


			\author{Davy Paindaveine\footnote{Davy.Paindaveine@ulb.be}}
\date{Universit\'{e} libre de Bruxelles}

\maketitle

\begin{abstract}
We study the last-success problem for sequential Bernoulli trials in the homogeneous setting where \(X_1,\ldots,X_n\) are i.i.d. \(\mathrm{Bernoulli}(p)\), with unknown \(p\in(0,1)\). For known \(p\), Bruss' sum-the-odds theorem gives an optimal threshold rule with win probability \(V_n(p)\); for unknown \(p\), the odds driving this threshold must be learned online from the same sequence on which one is trying to stop. We analyze the resulting statistical decision problem over all \(p\)-blind rules, and write \(W_n(p)\) for the win probability of the natural plug-in odds rule. Our main result is an exact asymptotic minimax theorem: for any \(p_0\in(0,\tfrac12)\), the limit of \(\sqrt n\,\inf_\pi\sup_{p\in[p_0,1)}\{V_n(p)-W_n^\pi(p)\}\), where the infimum is over all possibly randomized \(p\)-blind rules, is \(C_\star=\tfrac12\sup_{u>0}u\Phi(-u)=0.08498\ldots\), with \(\Phi\) denoting the standard normal distribution function. The same constant is attained by the plug-in rule, which is therefore asymptotically minimax optimal. The result is local in nature: at each transition point \(p=1/k\), where the oracle threshold jumps, the deficit has an exact local minimax constant proportional to \(\gamma_k=(1-\tfrac1k)^{k-2}\{k^{-1}(1-k^{-1})\}^{1/2}\), and the global least favourable point is \(k=2\). Thus the root-\(n\) barrier is caused not by estimating \(p\) itself, but by the discontinuity of the oracle action. We also quantify the price of sample splitting: estimating \(p\) on an initial fraction \(a\) of the horizon and then freezing the estimate is rate-optimal but inflates the sharp constant by \(1/\sqrt a\). Finally, in sparse regimes \(p=p_n\to0\) with \(np_n\to\infty\), the plug-in rule is asymptotically oracle-optimal, and the critical window \(p\asymp1/n\) is a genuine barrier: no \(p\)-blind rule can converge uniformly to the oracle win probability over all \(p\in(0,1)\).
\end{abstract}

%
%



\section{Introduction}
\label{sec:intro}

Optimal stopping problems lie at the interface of probability and sequential
decision theory: decisions must be taken online, on the basis of the
observations revealed so far, with no access to future outcomes; we refer to
\cite{PeskirShiryaev2006} for a broad treatment. A canonical
illustration is the \emph{parking problem} of \cite{MacQueenMiller1960}; see
\citet[Section~2.5]{FergusonBook} for a textbook treatment. A driver travels
along a one-way street towards her destination, each parking place being free
with probability~$p$, independently across places. Places are inspected one at a
time and, at each free place, she must decide irrevocably whether to park or to
drive on, a place that has been passed being lost forever. In the original
formulation, the driver minimizes the expected distance between the place she
takes and her destination, and an optimal rule is a threshold rule: drive on
until a prescribed number of places from the destination, then take the first
free place; see \cite{Tamaki1988} for the variant in which U-turns are allowed.
Here we consider the natural variant in which no reward function is
specified: the driver simply wishes to park at the \emph{last free
place}---the best outcome achievable on the realized configuration. Writing~$X_t$
for the indicator that place~$t$ is free, this is exactly the problem of stopping
on the last success in a sequence of Bernoulli trials.

This \emph{last-success problem}, in which one observes independent Bernoulli
trials~$X_1,\ldots,X_n$ with possibly distinct success probabilities
$p_1,\ldots,p_n$ and wishes to stop exactly on the final success, is one of the
most classical instances of optimal stopping. When these success probabilities
are known, it admits an elegant and complete solution through Bruss'
\emph{sum-the-odds} theorem \citep[Theorem~1]{Bruss2000}: summing the odds
$p_t/(1-p_t)$ backwards from~$t=n$ until the total first reaches one determines a
threshold~$s_n$, and stopping at the first success (if any) occurring at or
after~$s_n$ is optimal.
The sum-the-odds theorem, which also provides a closed-form expression for the
corresponding optimal win probability, has become a cornerstone of the
last-success literature. A large body of work---of which we cite only a few representative
references---develops extensions and refinements of the model and of the
resulting optimal threshold rules, including sharper bounds and variants of the
theorem \citep{Bruss2003,Ferguson2011,GrauRibas2020NoteLastSuccess},
Markov-dependent trials \citep{HsiauYang2002}, multiple-choice formulations in
which one is allowed up to $\ell$ stopping chances
\citep{AnoKakinumaMiyoshi2010}, versions in which one aims to stop on any of the
last $\ell$ successes \citep{Tamaki2010}, problems in which exactly $k$ among
the last $\ell$ successes should be selected \citep{MatsuiAno2017}, and stopping
on the $\ell$th last success \citep{BrussPaindaveine2000}. Further variations
include the group-interview secretary variant of
\cite{HsiauYang2000NaturalVariation}, trapping the ultimate success
\citep{GnedinDerbazi2021TrappingUltimateSuccess}, and random observation times
\citep{GnedinDerbazi2025RandomObsTimes}.
A common feature of all of this work is that the success probabilities are
treated as \emph{known} to the decision maker.

In most applications, however, the success probabilities are not available, so
that the oracle rule cannot be implemented. This turns the last-success problem
into a genuinely \emph{statistical} decision problem: the odds that drive the
optimal threshold must be estimated from the very same sequence on which one is
trying to stop, and there is no separate training phase. In his survey of the odds
theorem, \citet[Section~2.1]{Dendievel2012} describes this unknown-odds case as
an open problem, and ``an important one regarding applications, for it is often
closer to reality than the model in which we assume that we know the
parameters''. It has nonetheless received surprisingly little attention. A notable contribution is
\cite{BrussLouchard2009}, which studies an odds-type algorithm based on
sequential updating and plug-in/empirical-odds ideas. More recent work
investigates variants in which the decision maker receives auxiliary
information, for instance $m$ preliminary samples from each Bernoulli
distribution \citep{YoshinagaKawase2024}; in the special case $m=1$, this
setting connects, via a reduction that treats the no-success scenario as a win,
to the adversarial-order single-sample secretary problem of
\cite{NutiVondrak2023}, whose upper bound shows the resulting guarantee to be
best possible. They also show that no policy can guarantee a winning
probability of exactly~$1/e$---the value that Bruss' rule guarantees when the
odds are known---from any finite number of samples. These
works nonetheless leave the core statistical questions open: the
sample-augmented model of \cite{YoshinagaKawase2024} relies on side information
absent from the standard online observation model, while
\cite{BrussLouchard2009}, though it develops and analyzes sequential-updating
rules, provides neither sharp finite-horizon worst-case comparisons to the
oracle nor an identification of the regimes in which oracle approximation is
possible---let alone a decision-theoretic account of whether any oracle-free
rule can be optimal.

\subsection{Setting and the plug-in rule}
\label{sec:setting}

The present paper takes up precisely this decision-theoretic viewpoint. We
consider the last-success problem in the minimal-information model in which only
the sequential Bernoulli outcomes are observed. In this model, it is of course
impossible to estimate the full collection of success probabilities without
structural assumptions, so we work in the homogeneous setting in which
$X_1,\ldots,X_n$ are i.i.d.\ $\mathrm{Bernoulli}(p)$ for some unknown
$p\in(0,1)$ (we write $\mathbb P_p$ for the corresponding probability measure).
This is the canonical such assumption---it is the one under which the parking
problem was described above---and arguably the case to settle
first.\footnote{More generally, \cite{BrussLouchard2009} considers
$p_t=f_tp\in[0,1]$ for $t=1,\ldots,n$, with known coefficients~$f_t$ and a single
unknown~$p$. We expect the rate results of Section~\ref{sec:oracle_bounds} to
extend to that framework under mild regularity assumptions on~$(f_t)$. The sharp
constants of Section~\ref{sec:constants} are more delicate: in the homogeneous
case, the oracle threshold depends on~$p$ only through~$\lceil1/p\rceil$, and it
is this discrete structure that generates the constants, so that these would have
to be recomputed for a general sequence~$(f_t)$.} If $p$ were known, the sum-the-odds theorem states
that an optimal rule in this homogeneous setting is the threshold rule stopping at the first success time
$t\ge s_n(p)$ (if any), where
\begin{equation}
\label{eq:sn_homo}
s_n(p)
:=
\max\{1,\ n-m(p)+1\},
\qquad
m(p):=\bigg\lceil \frac1p\bigg\rceil-1,
\end{equation}
with associated oracle win probability
\begin{equation}
\label{WinOracle}
V_n(p)
=
\mathbb P_p\bigl[\textstyle\sum_{t=s_n(p)}^{n}X_t=1\bigr]
=
(n-s_n(p)+1)\,p\,(1-p)^{\,n-s_n(p)}.
\end{equation}
An oracle-free rule must be \emph{$p$-blind}: it may depend on the observed data
(and possibly on additional internal randomization) but not on $p$. For such a
rule $\pi$, with associated stopping time $\tau_n^\pi$, we write
\begin{equation}
\label{eq:Wpi_def}
W_n^\pi(p)
:=
\mathbb P_p\bigl[\tau_n^\pi\le n,\ X_{\tau_n^\pi}=1,\
X_{\tau_n^\pi+1}=\cdots=X_n=0\bigr]
\end{equation}
for its win probability under $\mathbb P_p$, i.e.\ the probability that $\pi$
stops exactly on the last success. The canonical $p$-blind rule is the
\emph{plug-in} (odds) rule $\hat\pi$, which replaces $p$ online by the running
empirical estimate $\hat p_t:=S_t/t$, where $S_t:=\sum_{i=1}^{t}X_i$, and applies
the oracle decision with $p=\hat p_t$ at each time $t$. It is easy to check that it corresponds to the
stopping time
\begin{equation}
\label{StoppingTimePlugin}
\hat\tau_n
:=
\inf\Bigl\{t\in\{1,\dots,n\}:\ X_t=1
\ \text{ and }\
\bigl(\hat p_t<\tfrac{1}{n-t+1}\ \text{ or }\ t=n\bigr)\Bigr\},
\end{equation}
with $\inf\varnothing:=+\infty$; we abbreviate its win probability as
$W_n(p):=W_n^{\hat\pi}(p)$. Our goal is to compare $W_n(p)$ with the oracle
benchmark $V_n(p)$ and, more broadly, to quantify the intrinsic limits of
oracle-freeness over the class of \emph{all} $p$-blind rules.

Before proceeding, we show a basic yet important structural feature of the plug-in rule (throughout, the plug-in rule refers to the rule based on the stopping time~$\hat\tau_n$  in~(\ref{StoppingTimePlugin})). On~$\{\hat\tau_n\le n-1\}$, we have 
\[
X_{\hat\tau_n}=1
\quad (\textrm{so } S_{\hat\tau_n}\ge1)
\quad
\textrm{ and } 
\quad
\frac{1}{n-\hat\tau_n+1}>\hat p_{\hat\tau_n}=\frac{S_{\hat\tau_n}}{\hat\tau_n}\ge \frac{1}{\hat\tau_n}
,
\]
which, since~$\hat\tau_n$ takes integer values, 
implies that
\begin{equation}
\label{noearlystop}
\hat\tau_n\ \ge\ \Big\lceil\frac{n}{2}\Big\rceil+1
\qquad\text{almost surely.}
\end{equation}
This “second-half” constraint will play a key role in our analysis: in particular, it guarantees that when the plug-in rule stops,
it does so based on a relatively stable estimate of~$p$.

Figure~\ref{Fig1} illustrates the resulting performance: its left panel plots the
plug-in win probability~$W_n(p)$ together with the oracle benchmark~$V_n(p)$
of~(\ref{WinOracle}) for~$n=5,10,30$, and its right panel the deficit
$V_n(p)-W_n(p)$. The deficit is small over a wide range of~$p$, even at the
moderate horizons considered here. The kinks at~$p=1/k$ reflect the
non-differentiability of~$V_n(p)$ at these points: as~$p$ decreases
through~$1/k$, the number~$m(p)$ of observations on which the oracle rule acts
jumps from~$k-1$ to~$k$. The plug-in win probability, by contrast, is smooth---it
is in fact a polynomial in~$p$, as follows from the dynamic-programming recursion
of Appendix~\ref{secExactFiniteHorizon}, through which it is
computed exactly here.

\begin{figure}[h!]
\hspace*{-9mm}
\includegraphics[width=1.075\textwidth]{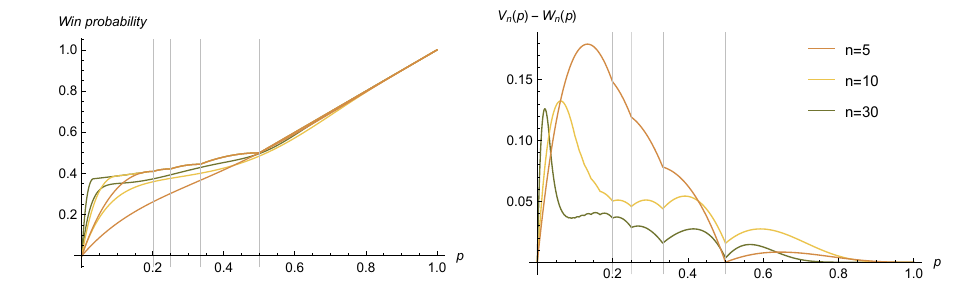}
\caption{%
(Left panel:) plug-in win probability $W_n(p)$ and oracle win probability
$V_n(p)$ as functions of $p\in(0,1)$, for $n\in\{5,10,30\}$ (for each $n$, the
upper curve is $V_n(p)$, since $V_n(p)\ge W_n(p)$). (Right panel:) the deficit
$V_n(p)-W_n(p)$ for $n\in\{5,10,30\}$. Vertical gray lines mark the
non-differentiability points of $V_5(p)$, i.e.\ $p=1/k$ with $k=2,3,4,5$.}
\label{Fig1}
\end{figure}

\subsection{Contributions}
The central contribution of this paper is that the plug-in rule is
\emph{asymptotically minimax optimal}, with an exact constant. Fix
$p_0\in(0,\tfrac12)$,\footnote{The restriction $p_0<\tfrac12$ is necessary: for
$p\ge\tfrac12$ the oracle stops only at the terminal step~$n$ and is matched by a
trivial \mbox{$p$-blind} rule, so no non-trivial lower bound can hold
on~$[p_0,1)$ for $p_0\ge\tfrac12$.} let $\Phi$ denote the standard normal
distribution function, and set
\begin{equation}
\label{eq:intro_Cstar}
C_\star
:=
\frac12\sup_{u>0}u \Phi(-u)
=
0.08498\ldots
.
\end{equation}
We prove that
\begin{equation}
\label{eq:intro_sharp}
\lim_{n\to\infty}\sqrt n\,
\sup_{p\in[p_0,1)}\bigl(V_n(p)-W_n(p)\bigr)
\ =\
C_\star
\ =\
\lim_{n\to\infty}\sqrt n\,
\inf_\pi\sup_{p\in[p_0,1)}\bigl(V_n(p)-W_n^\pi(p)\bigr)
,
\end{equation}
the infimum being taken over all (possibly randomized) \mbox{$p$-blind} rules;
the left-hand equality is Theorem~\ref{thm:plugin_sharp} and the right-hand one
is Theorem~\ref{thm:minimax_sharp}. In words, the worst-case oracle deficit of
the plug-in rule and the minimax risk of the problem share the \emph{same} exact
asymptotic constant: no oracle-free rule, however sophisticated and even allowing
randomization, can improve on the simple empirical-odds prescription---not even
by a constant factor.

Reaching~(\ref{eq:intro_sharp}) requires first settling the rate, which is the
object of Section~\ref{sec:oracle_bounds}. We show there that
\begin{equation}
\label{eq:intro_minimax}
\inf_\pi\ \sup_{p\in[p_0,1)}\bigl(V_n(p)-W_n^\pi(p)\bigr)
\ \asymp\
\frac{1}{\sqrt n},
\end{equation}
the upper bound being attained by the plug-in rule
(Theorem~\ref{thm:finite_oracle_ineq_p0}) and the matching lower bound holding
for \emph{every} \mbox{$p$-blind} rule (Theorem~\ref{thmLowerBound}); in
particular, the root-$n$ rate is an intrinsic feature of the unknown-$p$
last-success problem rather than an artifact of the plug-in choice. The two
levels of description are complementary rather than redundant: the
finite-horizon, pointwise bounds behind~(\ref{eq:intro_minimax}) are the ones
that apply at a fixed~$n$, that cover every $p_0\in(0,1)$, and that feed the
later sparse-regime analysis, whereas~(\ref{eq:intro_sharp}) describes the worst
case in the limit.

We are not aware of a previous exact asymptotic minimax analysis for a
sequential optimal-stopping problem of this type, and
achieving it raises a difficulty that is specific to stopping problems. In
standard estimation, a concentration inequality $|\hat p-p|\le\varepsilon$
translates, through smoothness of the target functional, into an
$O(\varepsilon)$ risk bound. Here, by contrast, the quantity of interest---a
win or a loss---is not a smooth functional of $\hat p_t$ but a \emph{global}
event determined by the entire trajectory of the stopping rule: the rule wins
only if its stopping time lands exactly on the last success. Moreover, the oracle
win probability $V_n(p)$ is only piecewise smooth, with kinks at the transition points
$p=1/k$ where the optimal threshold~$s_n(p)$ jumps. Near such a point, a small
error in $\hat p_t$ can push the plug-in rule across the threshold
$1/(n-t+1)$ and make it act on the ``wrong'' horizon. The crux of the analysis
is therefore to quantify precisely how a Hoeffding-type deviation bound on
$\hat p_t$ propagates to the win probability of the induced stopping rule. We
show that the deficit $V_n(p)-W_n(p)$ decays exponentially for every fixed $p$,
but that the worst case over $[p_0,1)$ is governed by parameters lying at
distance of order $1/\sqrt n$ from a transition point $1/k$---close enough that
no $p$-blind rule can reliably tell which side of the threshold it is
on.\footnote{\label{fn:transition}It may seem surprising that the least favourable parameters are not
the transition points themselves, since these are precisely where a
\mbox{$p$-blind} rule is most likely to misjudge on which side of the threshold
$p$ lies. The reason is that the cost of misjudging vanishes there: at $p=1/k$,
the oracle is indifferent between stopping on the first success among the last
$k-1$ observations and doing so among the last $k$, both thresholds achieving the
oracle win probability~$V_n(p)$, so that either choice is harmless. We comment
further on this after Theorem~\ref{thm:finite_oracle_ineq_p0}.} This
is exactly where the root-$n$ barrier originates, and the matching lower bound
turns this indistinguishability into a quantitative loss via a two-point
(Le~Cam--Pinsker) argument at $p=\tfrac12\pm\tfrac{h}{\sqrt n}$. In fact, the
analysis is local at each transition point separately: Proposition~\ref{prop:local_at_1k}
shows that, on the local scale $p=\tfrac1k+u\sigma_k/\sqrt n$---where
$\sigma_k$ denotes the standard deviation of a Bernoulli variable with success
probability~$\tfrac1k$---the plug-in rule has limiting risk
$\gamma_k|u|\Phi(-|u|)$, for an explicit constant~$\gamma_k$ given in
Section~\ref{sec:constants}, and that the corresponding local minimax constant is
$\gamma_k\sup_{u>0}u\Phi(-u)=2\gamma_k C_\star$. The global constant is
the maximum of these over~$k$, attained at~$k=2$; the exact constant is thus not
a phenomenon attached to $p=\tfrac12$, but the largest term in a complete local
theory of the oracle discontinuities.

We complement these core results along three further axes.

\begin{itemize}

\item[(a)] \emph{The price of sample splitting.}
Section~\ref{sec:splitting} studies the natural alternative that estimates~$p$ on
an initial fraction~$a$ of the horizon, then applies the sum-the-odds rule with
this frozen estimate over the remaining times. Such rules admit a closed-form
win probability, remain minimax rate-optimal, and satisfy the exact analogue
of~(\ref{eq:intro_minimax}) with~$n$ replaced by~$an$
(Theorem~\ref{thm:oracle_split}). Comparing them with the plug-in rule, however,
is a matter of exact constants and is therefore beyond the reach of such rate
statements: it is the sharp asymptotics of Theorem~\ref{thm:split_sharp}, which
produce the very same constant~$C_\star$ on the~$1/\sqrt{an}$ scale, that make
the worst-case deficit of~$\pi_a$ larger by the exact factor~$1/\sqrt a$
(Corollary~\ref{co:ratio}). Sequential updating is thus strictly preferable, and
the value of the discarded information is quantified exactly.
\vspace{1mm}

\item[(b)] \emph{Sparse regimes and maximal uniform convergence.}
In the sparse regime where $p=p_n\to0$ with $np_n\to\infty$, we prove that the
plug-in rule is asymptotically oracle-optimal, in the sense that
$V_n(p_n)-W_n(p_n)\to0$, together with an explicit rate for the deficit
(Theorem~\ref{thm:plugin-eff-sparse}); here the crude Hoeffding control used in
the non-sparse regime is replaced by a variance-sensitive martingale deviation
inequality. Combining the sparse and non-sparse bounds yields a uniform
convergence statement on $(0,\tilde p_n]\cup[p_n,1)$ whenever $n\tilde p_n\to0$
(Theorem~\ref{thm:uniform-etan}), which we show cannot be extended to the
critical window $p\asymp1/n$.
\vspace{1mm}

\item[(c)] \emph{A global barrier.}
We prove that \emph{no} sequence of oracle-free (possibly randomized) rules can
converge uniformly to the oracle win probability over all $p\in(0,1)$
(Theorem~\ref{thm:no_uniform_abs}), identifying a genuine and rule-independent
barrier to global uniform oracle approximation. Together with~(b), this shows
that the plug-in rule already achieves the largest uniform convergence any oracle-free
rule can achieve, so that its failure to be uniform over the whole of~$(0,1)$
reflects the hardness of the problem rather than a defect of the rule. Combined with the minimax optimality
of Section~\ref{sec:constants}, this makes the plug-in rule a thoroughly
satisfactory answer to the unknown-$p$ last-success problem: it is optimal where
optimality is possible, and where it fails, so does everything else.
\vspace{1mm}

\end{itemize}

For the sake of completeness, Appendix~\ref{secExactFiniteHorizon} also provides an exact
finite-horizon analysis of the plug-in rule.
Theorem~\ref{thm:exact_Wn_plugin} gives a
representation of~$W_n(p)$ through a dynamic-programming recursion, yielding
the~$O(n^2)$ evaluation scheme on which Figure~\ref{Fig1} rests. The
unknown-$p$ formulation moreover exhibits finite-horizon decision-theoretic
obstructions of independent interest---a strict separation~$W_n(p)<V_n(p)$ for
all~$n\ge6$ and~$p\in(0,1)$, and the nonexistence of a uniformly optimal
\mbox{$p$-blind} rule, even allowing randomization.

\subsection{Organization}

Section~\ref{sec:oracle_bounds} settles the rate: an oracle inequality for the plug-in rule and a matching minimax lower bound show that, away from
sparsity, the best achievable worst-case deficit is of exact order~$1/\sqrt n$. Section~\ref{sec:constants} sharpens both
into exact constants and proves that the plug-in rule is asymptotically minimax
optimal. Section~\ref{sec:splitting} quantifies the cost of the sample-splitting
alternative. Section~\ref{sec:sparse} treats the sparse regime, the maximal
uniform-convergence result, the $p\asymp1/n$ barrier, and the global
impossibility. Section~\ref{sec:conclu} wraps up and provides perspectives for future research.
The appendices contain the complementary results and all the proofs.
Appendix~\ref{secExactFiniteHorizon} develops the exact finite-horizon analysis
of the plug-in rule, and Appendix~\ref{secAuxiliaryResults} gathers the auxiliary
results together with the finite-horizon decision-theoretic obstructions.
Appendices~\ref{appProofsRates}--\ref{secProofsSection6} then collect the proofs
of Sections~\ref{sec:oracle_bounds}--\ref{sec:sparse}, in that order.


\section{Minimax rate optimality away from sparsity}
\label{sec:oracle_bounds}

At a fixed horizon, no
\mbox{$p$-blind} rule can be uniformly optimal: the dominance partial order on
such rules---where~$\pi$ dominates~$\pi'$ whenever $W_n^\pi(p)\ge W_n^{\pi'}(p)$
for all~$p\in(0,1)$---has no greatest element for any~$n\ge2$, even if one allows
randomization (Theorem~\ref{thm:no_uniform_opt}). The natural objective is
therefore to control the oracle gap~$V_n(p)-W_n^\pi(p)$ uniformly over
$p$ in a meaningful range, rather than to seek a rule that is exactly optimal at
every~$p$ at once.
We first consider the non-sparse regime $p\in[p_0,1)$, with $p_0>0$ fixed, and
discuss what can be achieved in terms of rate. The following result establishes a
sharp oracle inequality for the plug-in rule there, with worst-case rate of
order $1/\sqrt{n}$. Its proof is where the deviation
bounds on~$\hat p_t$ are propagated to the win probability---the step
identified in Section~\ref{sec:intro} as the crux of the analysis: it proceeds by
introducing a \emph{predictable switch time} at which the rule's threshold
crosses~$p$, and by comparing the conditional and unconditional win probabilities
on either side of it. 

\begin{thm}
\label{thm:finite_oracle_ineq_p0}
Fix $p_0\in(0,1)$. Let
$
M_0:=m(p_0)=\lceil \frac{1}{p_0}\rceil-1$
and 
$$
\mathcal B_{p_0}
:=
\bigg\{\frac{1}{k}: k=2,\dots,M_0+1\bigg\}
.
$$
Let $\Delta_p:=\mathrm{dist}(p,\mathcal B_{p_0})=\min\{|p-q|:q\in \mathcal B_{p_0}\}$. Then, (i) there exist positive constants $C_1(p_0),C_2(p_0)$, and $c(p_0)$ such that for all $n\ge 2$ and all $p\in[p_0,1)$,
\begin{equation}
\label{eq:p0_pointwise_distance}
V_n(p)-W_n(p)
\le 
C_1(p_0)
\Delta_p
e^{-c(p_0)n\Delta_p^2}
 +
C_2(p_0)
e^{-c(p_0)n}
.
\end{equation}
(ii) For~$p_0>\frac{1}{2}$, there exist positive constants $C(p_0),c(p_0)$ such that for all $n\ge 2$,
\begin{equation}
\label{eq:p0_uniform_exp_term}
\sup_{p\in[p_0,1)}\bigl(V_n(p)-W_n(p)\bigr)
\le 
C(p_0)e^{-c(p_0)n}
,
\end{equation}
whereas for~$p_0\leq \frac{1}{2}$,  there exists a positive constant~$C(p_0)$ such that for all $n\ge 2$,
\begin{equation}
\label{eq:p0_uniform_two_term}
\sup_{p\in[p_0,1)}\bigl(V_n(p)-W_n(p)\bigr)
\le 
\frac{C(p_0)}{\sqrt{n}}
.
\end{equation}
\end{thm}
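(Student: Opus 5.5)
The plan is to reduce the plug-in rule, on a high-probability event, to an oracle-type threshold rule with a random window. Then two costs are compared: the cost of using the wrong window, and the probability of using it.

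\textbf{Step 1: the deterministic picture.} For a threshold rule acting on the last $j$ observations, the win probability is $f_p(j):=j\,p(1-p)^{j-1}$. Write $m:=m(p)$, so that $p\in[\tfrac1{m+1},\tfrac1m)$. The map $j\mapsto f_p(j)$ is unimodal with maximum $V_n(p)=f_p(m)$, and $f_p(m)=f_p(m+1)$ exactly when $p=\tfrac1{m+1}$. Differentiating in $p$, I expect
\[
|f_p(m)-f_p(m\pm1)|\le C(p_0)\,\bigl|p-q_\pm\bigr|,
\]
where $q_+=\tfrac1{m+1}$ and $q_-=\tfrac1m$ are the adjacent transition points. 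Both of these lie in $\mathcal B_{p_0}$ whenever they matter. Hence using a window adjacent to $m$ costs at most $C\Delta_p$, while any other window costs at most $1$.

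\textbf{Step 2: the predictable switch time.} At time $t$ with $j=n-t+1$ observations remaining, the plug-in stops at a success iff $(S_{t-1}+1)/t<1/j$. This condition is $\mathcal F_{t-1}$-measurable, which makes it a predictable activation indicator. Let $\sigma$ be the first $t$ at which it holds. By (\ref{noearlystop}), $\sigma\ge\lceil n/2\rceil+1$.

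Fix $\delta=\delta(p_0)>0$ smaller than half the minimal gap between consecutive points of $\{1/k:k\le M_0+2\}$. Define the good event
\[
A:=\bigl\{|\hat p_t-p|<\delta \text{ for all } t\ge n/2\bigr\}.
\]
Note that $1/(M_0+2)<p_0$, so windows $j\ge M_0+2$ are also separated from $p$ by a positive gap. On $A$, the activation indicator can only change within a window of size $m$ or $m\pm1$. Moreover, once active the rule stays active, so the plug-in behaves exactly as a threshold rule with window $n-\sigma+1$. The reason is that activation past the relevant $1/j$ cannot be undone by a single increment of $\hat p$ of size $\le 1/t=O(1/n)$ before the next threshold $1/(j-1)$ is reached.

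By Hoeffding's maximal inequality (for instance via a union bound over $t\ge n/2$), $\mathbb P_p(A^c)\le C_2e^{-cn}$. Conditioning on $\mathcal F_{\sigma-1}$ and using independence of future trials gives
\[
V_n-W_n\le \mathbb E\bigl[f_p(m)-f_p(n-\sigma+1)\bigr]+C_2e^{-cn}\le C\Delta_p\,\mathbb P_p\bigl(n-\sigma+1\neq m\bigr)+C_2e^{-cn}.
\]

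\textbf{Step 3: probability of the wrong window.} The event $\{n-\sigma+1=m+1\}$ requires $\hat p_{n-m}$, up to an $O(1/n)$ correction, to lie below $\tfrac1{m+1}$ although $p\ge\tfrac1{m+1}$, a deviation of size $\approx p-\tfrac1{m+1}$. Symmetrically, $\{n-\sigma+1=m-1\}$ requires a deviation of size $\approx\tfrac1m-p$. Since $t\ge n/2$, Hoeffding gives $e^{-cn\Delta_p^2}$ for each of these events.

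The $O(1/n)$ shift is the delicate point. I would absorb it either by bounding $C\Delta_p\cdot1$ when $\Delta_p\lesssim 1/\sqrt n$, noting that $e^{-cn\Delta_p^2}$ is then of order one, or by using $(\Delta_p-C/n)^2\ge\Delta_p^2/2-C'/n^2$. Together with Step 2 this yields (\ref{eq:p0_pointwise_distance}).

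\textbf{Step 4: the uniform bounds.} For part (ii), when $p_0>\tfrac12$ we have $M_0=1$ and $\mathcal B_{p_0}=\{\tfrac12\}$, so $\Delta_p\ge p_0-\tfrac12>0$ on $[p_0,1)$ and (\ref{eq:p0_pointwise_distance}) is exponentially small. When $p_0\le\tfrac12$, the elementary bound $\sup_{x>0}xe^{-cnx^2}=(2ecn)^{-1/2}$ gives (\ref{eq:p0_uniform_two_term}).

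\textbf{Main obstacle.} I expect the hardest part to be Step 2: showing rigorously that on $A$ the activation set is an interval $[\sigma,n]$ despite the non-monotonicity of $\hat p_t$. If this fails, my fallback is a direct comparison. I would write the win event as a sum over the last success time. Then I would bound the discrepancy with the oracle by the probability that the activation indicator at the two boundary times $n-m$ and $n-m+1$ disagrees with the oracle's choice, times the $O(\Delta_p)$ conditional loss. This sidesteps global monotonicity.
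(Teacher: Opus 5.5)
Your proposal is correct. Its skeleton matches the paper's:
\begin{itemize}
\item a Hoeffding good event on the second half of the horizon;
\item reduction, on that event, to a threshold rule whose window lies in $\{m-1,m,m+1\}$;
\item the exact gaps $f_p(m)-f_p(m\pm1)$ as costs;
\item Hoeffding for the wrong-window probabilities;
\item $\sup_{x\ge0}xe^{-cnx^2}$ at the end.
\end{itemize}

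Where you differ, usefully, is in how predictability is obtained. The paper defines its switch time through $\mathbb I[\hat p_t<r_t]$, which involves $X_t$, so that time is not predictable. It therefore needs several extra pieces: the events $H_t=\{|\hat p_{t-1}-r_t|\le 1/t\}$ with their own tail bound in Lemma~\ref{lem:Etails}; the event $E_n^{\mathrm{pred}}$ on which predictability is restored; a bias term $(q_t-p)(\lambda_t-\mu_t)$ comparing conditional and unconditional win probabilities; and a separate bound on the contribution $G_n$ of the complement of $E_n^{\mathrm{pred}}$.

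Your observation that, on $\{X_t=1\}$, the stopping condition reads $(S_{t-1}+1)/t<r_t$ makes the activation indicator $\mathcal F_{t-1}$-measurable from the outset. If $\tilde W$ is the win event of the threshold rule started at $\sigma$, then $\mathbb P_p[\tilde W]=\mathbb E_p[f_p(n-\sigma+1)]$ exactly. Moreover $W_n(p)\ge\mathbb P_p[\tilde W]-\mathbb P_p[A^c]$, since the two rules coincide on $A$. All of the machinery above then disappears. The paper itself switches to your formulation in Step~2 of the proof of Theorem~\ref{thm:plugin_sharp}, writing stopping at $t_-$ as $\{X_{t_-}=1,\ S_{t_--1}\le b_--1\}$. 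The only thing the paper's route buys is working with the indicator exactly as written in~(\ref{StoppingTimePlugin}).

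The obstacle you fear is not one. If $(S_{t-1}+1)/t<1/j$, then $(S_t+1)/(t+1)\le(S_{t-1}+2)/(t+1)<1/j+1/(t+1)\le 1/(j-1)$ whenever $t+1\ge j(j-1)$. So activation is deterministically monotone on the critical window once $n$ is large in terms of $M_0$. Meanwhile $A$ excludes activation for $j\ge m+2$ and forces it for $j\le m-1$. (The paper instead gets monotonicity from a margin $\delta_0$ equal to a third of the smallest gap.)

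One step must be corrected, although the final bound survives: a wrong adjacent window does not cost $O(\Delta_p)$. Window $m+1$ costs $K_-(p)\Delta_-$ and window $m-1$ costs $K_+(p)\Delta_+$, each proportional to the distance to its \emph{own} transition point. When $p$ sits next to $\frac1{m+1}$, the cost of window $m-1$ is of order one. The repair is that $\Delta_-+\Delta_+=1/\{m(m+1)\}$, so the larger distance is at least $1/\{2M_0(M_0+1)\}$. Its Hoeffding probability is then $e^{-cn}$, and that term goes into $C_2(p_0)e^{-c(p_0)n}$, exactly as in the paper's bound on $F_n$.

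Similarly, paths off $A$, on which $\sigma$ can be arbitrary, must be charged to $\mathbb P_p[A^c]$ rather than to $C\Delta_p$.

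Finally, the $O(1/n)$ shift only works against you for $\{\sigma=n-m+2\}$. For $\{\sigma=n-m\}$ one has $S_{n-m-1}-(n-m-1)p<-(n-m)\Delta_--(1-p)$, so Hoeffding applies with no correction.
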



In Figure~\ref{Fig2}, the left panel illustrates how the deficit $V_n(p)-W_n(p)$ depends on the horizon when $p$ is fixed. The curves become eventually close to linear in~$n$ on the logarithmic $y$-axis used there, which is consistent with an exponential decay of the deficit for fixed~$p$ (with a rate that can vary substantially with~$p$). The right panel shows the dependence on~$n$ of the worst-case deficit over~$p\in[p_0,1)$ for $p_0\in\{0.2,0.3,0.4\}$  and suggests a root-$n$ scaling, as the graphs of~$n\mapsto  \sqrt{n}\,\sup_{p\in[p_0,1)}(V_n(p)-W_n(p))$ appear to stabilize as~$n$ grows.
This motivates the $1/\sqrt{n}$ oracle upper bound~(\ref{eq:p0_uniform_two_term}) and the matching minimax lower bounds proved in Theorem~\ref{thmLowerBound} below. Overall, the figure highlights a marked gap between pointwise and uniform behavior: although the deficit decays much faster for any fixed~$p$, the worst-case deficit on~$[p_0,1)$ is governed by a genuinely hardest region that enforces the root-$n$ rate.

The pointwise bound~(\ref{eq:p0_pointwise_distance}) locates that region, and completes the discussion opened in Footnote~\ref{fn:transition} of the Introduction. Its two factors pull in opposite directions. The factor~$\Delta_p$ is the cost of misjudging on which side of a boundary the parameter lies: as~$p$ approaches a point of~$\mathcal B_{p_0}$, the two adjacent thresholds become equally good---at~$p=1/k$ both are oracle-optimal---so that an incorrect decision costs nothing. The factor~$e^{-c(p_0)n\Delta_p^2}$, on the other hand, is the probability of such a misjudgement, and it decays as~$p$ moves away from~$\mathcal B_{p_0}$ and the empirical odds separate the two sides. Parameters much closer to~$\mathcal B_{p_0}$ than~$1/\sqrt n$ may therefore be misclassified, but harmlessly so, whereas parameters much farther are classified correctly with overwhelming probability; the deficit is largest where the two effects balance. Maximizing~$\Delta\mapsto\Delta e^{-c(p_0)n\Delta^2}$ over~$\Delta\ge0$ locates this balance at~$\Delta=1/\sqrt{2c(p_0)n}$ and gives the maximal value~$1/\sqrt{2ec(p_0)n}$: this is where the~$1/\sqrt n$ rate of~(\ref{eq:p0_uniform_two_term}) comes from.

\begin{figure}[h!]
\hspace*{-2mm}\includegraphics[width=1.02\textwidth]{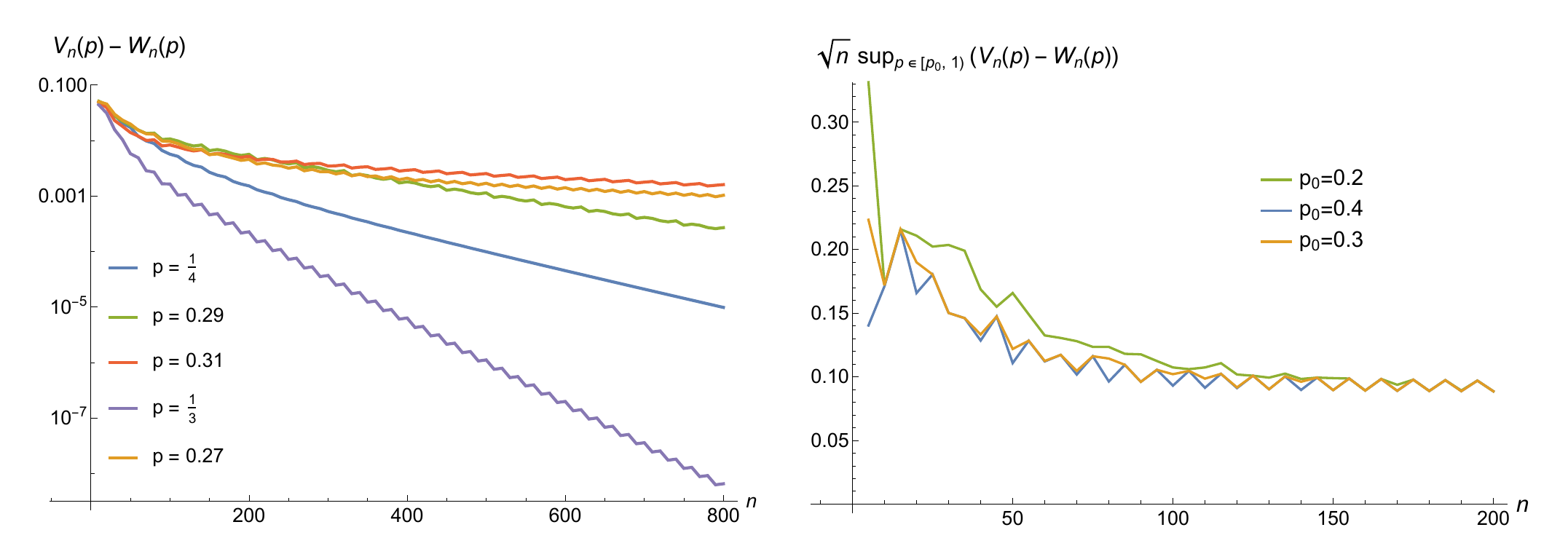}
%
\caption{
(Left panel:) deficit $V_n(p)-W_n(p)$ as a function of $n$, evaluated at the horizons $n=10,20,\ldots,800$, for several fixed values of~$p$ between the boundary points~$p=1/3$ and~$p=1/4$ (logarithmic $y$-axis). (Right panel:) $\sqrt{n}$-scaled worst-case deficit as a function of $n$, for $p_0\in\{0.2,0.3,0.4\}$.
}
\label{Fig2}
\end{figure}


Theorem~\ref{thm:finite_oracle_ineq_p0} shows that, under the mild condition~$p_0\leq \frac{1}{2}$, the worst-case deficit of the plug-in rule in the non-sparse regime converges to zero at rate~$1/\sqrt{n}$. For~$p_0< \frac{1}{2}$, we complement this result with a matching minimax lower bound showing that no (possibly randomized) \mbox{$p$-blind} rule can exhibit a faster rate.

\begin{thm}
\label{thmLowerBound}
For any $p_0\in(0,\tfrac12)$, there exists a positive constant $C(p_0)$ such that, for all~$n$ large enough,
\begin{equation}
\label{worb}
\inf_\pi \sup_{p\in[p_0,1)}\bigl(V_n(p)-W_n^\pi(p)\bigr)\ge \frac{C(p_0)}{\sqrt n},
\end{equation}
where the infimum is over all (possibly randomized) \mbox{$p$-blind} rules and where $W_n^\pi(p)$ denotes the win probability of~$\pi$.
\end{thm}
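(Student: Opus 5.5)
We will prove the bound with a two-point (Le~Cam) argument at the transition point $p=\tfrac12$, as anticipated in the Introduction. Set $p_\pm:=\tfrac12\pm h/\sqrt n$ for a fixed $h>0$; these lie in $[p_0,1)$ for $n$ large, since $p_0<\tfrac12$. The plan has three steps: compute the oracle at $p_\pm$, reduce any $p$-blind rule to a terminal choice between the two oracle actions, and then invoke indistinguishability of $\mathbb P_{p_+}$ and $\mathbb P_{p_-}$.

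\textbf{Step 1: the oracle at $p_\pm$.} For $p_+>\tfrac12$ we have $m(p_+)=1$, so the oracle stops only at time $n$ and $V_n(p_+)=p_+$. For $p_-\in(\tfrac13,\tfrac12)$ we have $m(p_-)=2$, so the oracle uses the last two trials and $V_n(p_-)=2p_-(1-p_-)$. This exceeds $p_-$ by $p_-(1-2p_-)=p_-\cdot 2h/\sqrt n$. Symmetrically, at $p_+$ the two-step rule loses $p_+(2p_+-1)\asymp 2h/\sqrt n\cdot\tfrac12$ relative to the last-step rule.

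\textbf{Step 2: reducing to a terminal choice.} Condition on $\mathcal F_{n-2}$. The continuation problem from time $n-1$ on is tiny. On the event that the rule has not stopped by time $n-2$, it faces two sub-cases. If it has stopped earlier, then even in the most favourable case its win probability is at most $\max_p V$ for that stop. We show that any stop before $n-1$ is dominated up to an $O(1)$ loss, or rather cannot help near $p=\tfrac12$. Concretely, stopping at a success at time $t\le n-2$ wins with probability $(1-p)^{n-t}\le p^2$, which is at most $\approx\tfrac14<V_n(p_\pm)\approx\tfrac12$. So such stops incur a constant deficit, which only helps the lower bound.

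At time $n-1$, a rule not yet stopped that sees $X_{n-1}=1$ must choose between stopping, which wins w.p.\ $1-p$, and continuing, which wins w.p.\ $p$. Let $\phi$ be its $\mathcal F_{n-1}$-measurable (randomized) probability of stopping. Up to the nonnegative early-stop loss, we then get
\[
V_n(p_-)-W_n^\pi(p_-)\ \ge\ c\,\tfrac{h}{\sqrt n}\,\mathbb E_{p_-}[1-\phi],\qquad V_n(p_+)-W_n^\pi(p_+)\ \ge\ c\,\tfrac{h}{\sqrt n}\,\mathbb E_{p_+}[\phi],
\]
for an absolute $c>0$. To justify this we must write $W$ exactly as a sum of the contributions of early stops, stops at $n-1$, and stops at $n$, and compare with the oracle's value. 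The key observation is that $\phi$ depends only on $X_1,\dots,X_{n-1}$, while $X_{n-1},X_n$ are independent of the past. The step I expect to be most delicate is making this decomposition clean when the rule mixes early stopping with late stopping. What I would try first is to define $\phi$ as the conditional probability of stopping at time $\le n-1$ and bound the win probability of any stop at time $\le n-1$ by $1-p$ times the probability of such a stop.

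\textbf{Step 3: indistinguishability.} Since $\phi\in[0,1]$ is measurable with respect to data of law $\mathbb P^{\otimes (n-1)}_{p_\pm}$,
\[
\mathbb E_{p_-}[1-\phi]+\mathbb E_{p_+}[\phi]\ \ge\ 1-\mathrm{TV}\bigl(\mathbb P_{p_+}^{\otimes(n-1)},\mathbb P_{p_-}^{\otimes(n-1)}\bigr).
\]
By Pinsker, this TV distance is at most $\sqrt{\tfrac{n-1}{2}\mathrm{KL}}$, and $\mathrm{KL}(p_+\|p_-)\asymp 16h^2/n$. Hence the TV distance is at most $C h$, which is at most $\tfrac12$ for small fixed $h$. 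Then the maximum over $p_\pm$ of the deficit is at least half their sum, which is at least $c h/(4\sqrt n)$. This yields~(\ref{worb}) with $C(p_0)$ in fact independent of $p_0$.
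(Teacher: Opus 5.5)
Your overall plan works, but Step~2, the step you flag, is not right as set up, and the fallback you propose would break it.

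The two displayed deficit bounds hold only if $\phi$ is the stopping decision \emph{on the event} $\{X_{n-1}=1\}$, i.e.\ a function of $(X_1,\ldots,X_{n-2},U)$ alone. If $\phi$ may depend on $X_{n-1}$, for instance $\phi=\mathbb P[\tau_n^\pi\le n-1\mid X_1,\ldots,X_{n-1}]$ as you suggest, the bound at $p_-$ is false. The $p_-$-oracle rule has zero deficit at $p_-$, yet $\mathbb E_{p_-}[1-\phi]=\mathbb P_{p_-}[X_{n-1}=0]=1-p_-$: continuing after a failure at $n-1$ is the correct action and costs nothing. For the same reason your ``key observation'' is the wrong way round. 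What you need is that $\phi$ does \emph{not} depend on $X_{n-1}$, so that events $\{X_{n-1}=1\}\cap\{\ldots\}$ factorize.

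A definition that works is $D:=\{\tau_n^\pi\le n-2\}$ and $\phi:=\mathbb I[D]+\mathbb I[D^c]\,\psi$. Here $\psi(X_1,\ldots,X_{n-2},U)\in\{0,1\}$ indicates whether the rule stops at $n-1$ when $X_{n-1}=1$.
\begin{itemize}
\item At $p_+$: the win probability conditional on $\sigma(X_1,\ldots,X_{n-1},U)$ is always $\le p_+$, and is $\le 1-p_+$ on $\{\tau_n^\pi\le n-1\}$. Hence $V_n(p_+)-W_n^\pi(p_+)\ge(2p_+-1)\mathbb P_{p_+}[\tau_n^\pi\le n-1]\ge(2p_+-1)\bigl(\mathbb P_{p_+}[D]+p_+\mathbb E_{p_+}[\mathbb I[D^c]\psi]\bigr)\ge(2p_+-1)p_+\mathbb E_{p_+}[\phi]$.
\item At $p_-$: switching to ``stop'' on $\{X_{n-1}=1,\tau_n^\pi\ge n\}$ (Lemma~\ref{LemmaLowerBound}(i)) gives $V_n(p_-)-W_n^\pi(p_-)\ge(1-2p_-)\mathbb P_{p_-}[X_{n-1}=1,\tau_n^\pi\ge n]=(1-2p_-)p_-\mathbb E_{p_-}[1-\phi]$. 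The equality uses independence of $X_{n-1}$ and $(X_1,\ldots,X_{n-2},U)$.
\end{itemize}
This is your Step~2 with $c=2p_-$, and Step~3 then goes through verbatim on $(X_1,\ldots,X_{n-2},U)$.

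With this convention the constant early-stop loss is never needed: early stops are dropped at $p_-$ and charged at rate $2p_+-1$ at $p_+$. If you instead put early stops on the ``continue'' side, you do need that loss at $p_-$. Then the correct bound is $(1-p)^{n-t}\le(1-p)^2$, not $p^2$, which is false at $p_-$ for $t=n-2$. You also need that on $D^c$ the conditional win probability is at most the two-trial value $\max\{p,2p(1-p)\}=V_n(p)$.

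With this repair your argument is a somewhat cleaner variant of the paper's. The paper proves the same two deficit bounds (Lemma~\ref{LemmaLowerBound}) but never forms a single test. Instead it splits on whether $\mathbb P_{p_n^-}[\tau_n^\pi\le n-2]\ge\frac14$ and charges early stops at $p_n^+$ in the first case. In the second case it uses the identity $p_n^-(1-\mathbb P_{p_n^-}[D])=\mathbb P_{p_n^-}[A]+\mathbb P_{p_n^-}[X_{n-1}=1,\tau_n^\pi\ge n]$ and moves $\mathbb P[A]$ and $\mathbb P[D]$ between the two hypotheses via the TV bound.

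Folding early stops into $\{\phi=1\}$ replaces that case analysis with a single application of $\mathbb E_{p_-}[1-\phi]+\mathbb E_{p_+}[\phi]\ge1-\mathrm{TV}$. This is the template the paper itself uses for the sharp local bound in Proposition~\ref{prop:local_at_1k}(ii). There early stops sit on the ``continue'' side, which is why the paper needs the extra step $q_n\to0$. What the paper's route buys in exchange is that it only manipulates probabilities of $\sigma(X_1,\ldots,X_{n-1},U)$-measurable events, so no conditional decision function has to be introduced.
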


Note that for~$p\in[\frac{1}{2},1)$, we have~$V_n(p)=p=W_n^{\pi^{\rm last}}(p)$, where~$\pi^{\rm last}$ is the~\mbox{$p$-blind} rule that never stops before time~$n$ and stops on time~$n$ if~$X_n=1$. Consequently, for any~$p_0\geq \frac{1}{2}$ and any~$n$, 
\[
\inf_\pi \sup_{p\in[p_0,1)}\bigl(V_n(p)-W_n^\pi(p)\bigr)
=
0
,
\]
so that no non-trivial lower bound exists for~$p_0\geq \frac{1}{2}$.

Two features of Theorem~\ref{thm:finite_oracle_ineq_p0} will matter repeatedly below and are not
captured by any worst-case rate. First, it is a genuine \emph{finite-horizon} statement: it holds for
every~$n\ge2$, with constants depending on~$p_0$ only---in particular, neither
on~$n$ nor on~$p$---rather than being a statement about a limit. Second, it is \emph{pointwise} in~$p$: at any
fixed~$p$, the bound~(\ref{eq:p0_pointwise_distance}) shows that the deficit decays exponentially
fast in~$n$. Both features are used in Section~\ref{sec:sparse}.



\section{Sharp constants and asymptotic minimax optimality}
\label{sec:constants}

Section~\ref{sec:oracle_bounds} provides matching bounds of order~$1/\sqrt n$ for the worst-case oracle deficit over~$[p_0,1)$: an upper bound for the plug-in rule and, in the minimax sense, a lower bound for the whole class of \mbox{$p$-blind} rules. Rates, however, may fail to discriminate between procedures: any rule matching~(\ref{eq:p0_uniform_two_term}) up to a constant is rate-optimal, and Theorem~\ref{thmLowerBound} leaves open the possibility that some cleverer rule improves substantially upon~$\hat\pi$. This section removes that indeterminacy by computing the constants exactly. We show that the worst-case deficit of the plug-in rule admits an exact asymptotic constant~$C_\star$ (Theorem~\ref{thm:plugin_sharp}), and that~$C_\star$ is also the exact minimax constant (Theorem~\ref{thm:minimax_sharp}); the plug-in rule is therefore asymptotically minimax optimal, and not merely rate-optimal. Throughout, we write
\begin{equation}
\label{eq:gdef}
g(\ell):=\ell p\,(1-p)^{\ell-1}
\end{equation}
for the win probability of the threshold rule that stops at the first success among the last~$\ell$ Bernoulli trials, so that the sum-the-odds theorem states that~$V_n(p)=g(m(p))=\max_{\ell\ge1}g(\ell)$ whenever~$n\ge m(p)$.

\begin{thm}
\label{thm:plugin_sharp}
Fix~$p_0\in(0,\frac12)$ and let
\begin{equation}
\label{eq:Cstar}
C_\star
:=
\frac{1}{2}\sup_{u>0}
u\Phi(-u)
=
0.08498\ldots
,
\end{equation}
where~$\Phi$ denotes the standard normal distribution function; the supremum is attained at the unique positive root~$u_\star=0.7517\ldots$ of~$\Phi(-u)=u\varphi(u)$, with~$\varphi:=\Phi'$. Then,
\begin{equation}
\label{eq:plugin_sharp}
\lim_{n\to\infty}
\ \sqrt n
\sup_{p\in[p_0,1)}\bigl(V_n(p)-W_n(p)\bigr)
\ =\
C_\star
,
\end{equation}
and, for any~$[p_0,1)$-valued sequence~$(p_n)$, one has~$\sqrt n\bigl(V_n(p_n)-W_n(p_n)\bigr)\to C_\star$ if and only if~$\sqrt n\,|p_n-\tfrac12|\to\tfrac{u_\star}{2}$.
\end{thm}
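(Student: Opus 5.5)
The plan is to localize the deficit near the transition points with Theorem~\ref{thm:finite_oracle_ineq_p0}(i), compute the exact local limit in each window, and show that the $k=2$ window dominates.

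\emph{Localization.} By (\ref{eq:p0_pointwise_distance}), $\sqrt n(V_n(p)-W_n(p))\le C_1\sqrt n\Delta_p e^{-cn\Delta_p^2}+o(1)$ uniformly in $p\in[p_0,1)$. The right-hand side tends to $0$ uniformly over $\{p:\sqrt n\Delta_p\ge A\}$ as $A\to\infty$. So it suffices to study, for each of the finitely many $k\in\{2,\dots,M_0+1\}$, the windows $p=\tfrac1k+u\sigma_k/\sqrt n$ with $|u|\le A'$, where $\sigma_k^2=k^{-1}(1-k^{-1})$. In each window we prove the uniform local limit
\[
\sqrt n\bigl(V_n(p)-W_n(p)\bigr)=\gamma_k|u|\Phi(-|u|)+o(1).
\]

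\emph{Local computation.} Near $1/k$ the oracle acts on the last $k-1$ or last $k$ trials, according to the side of $1/k$ on which $p$ lies. By (\ref{StoppingTimePlugin}), at time $t=n-k+1$ the plug-in rule stops on a success iff $\hat p_t<1/k$. At all earlier times it stops only if $\hat p_t<1/(k+1)$, and at later times it stops on any success iff it has not stopped before, up to events of exponentially small probability.

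By Hoeffding's inequality the earlier times contribute $O(e^{-cn})$. So up to exponentially small error, $W_n(p)$ equals $g(k)$ or $g(k-1)$ according to the decision at time $n-k+1$. This decision depends on $S_{n-k}+X_{n-k+1}$, and conditioning on $X_{n-k+1}=1$ shifts $\hat p_t$ by only $O(1/n)$.

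By a Berry--Esseen bound, uniform in $|u|\le A'$, the wrong decision has probability $\Phi(-|u|)+o(1)$. This step needs some care: the relevant sum is $S_{n-k}$, which is independent of the last $k$ trials. Its cost is $|g(k)-g(k-1)|$, which vanishes at $p=1/k$ and has derivative there equal to $\pm(1-\tfrac1k)^{k-2}$. Hence the cost is $(1-\tfrac1k)^{k-2}|u|\sigma_k/\sqrt n\,(1+o(1))$, which gives the local limit $\gamma_k|u|\Phi(-|u|)$.

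For $k=2$ this is transparent. The cost is $|p(1-2p)|$ and $\gamma_2=\tfrac12$, so the limit is $\tfrac12|u|\Phi(-|u|)$.

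\emph{Maximization and characterization.} The quantity $\sup_u\gamma_k|u|\Phi(-|u|)=2\gamma_kC_\star$ is maximized over $k$ at $k=2$. Indeed $\gamma_2=\tfrac12$, while $\gamma_k$ for $k\ge3$ is smaller ($\gamma_3\approx0.31$), and $\gamma_k$ is decreasing. Combining the localization step with the finitely many uniform local limits yields (\ref{eq:plugin_sharp}).

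For the ``if and only if'', I argue by compactness along subsequences. If $\sqrt n(V_n(p_n)-W_n(p_n))\to C_\star$, then $p_n$ must eventually lie in the $k=2$ window, since every other region has $\limsup$ strictly below $C_\star$. Within that window, $\sqrt n|p_n-\tfrac12|=|u_n|/2$. Every limit point $u$ of $(u_n)$ must satisfy $|u|\Phi(-|u|)=2C_\star$, which forces $|u|=u_\star$ by uniqueness of the maximizer. The converse follows directly from the uniform local limit.

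\emph{Main obstacle.} The hard part is making the local expansion \emph{uniform} in $u$ and rigorous. Concretely, I must show that all other stopping opportunities, both the early times and the later ``stop at any success'' times, reproduce $g(k)$ or $g(k-1)$ up to $o(n^{-1/2})$ uniformly. I would handle this by conditioning on $\mathcal F_{n-k}$ and using the second-half constraint (\ref{noearlystop}) together with Hoeffding's inequality, as in the proof of Theorem~\ref{thm:finite_oracle_ineq_p0}.
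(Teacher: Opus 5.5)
Your proposal is correct and follows essentially the paper's argument: on an exponentially likely event the plug-in rule departs from the oracle only through the single decision at time $n-k+1$ (the paper's $t_\pm$, indexed by cells rather than by boundary points), so the deficit is $|g(k)-g(k-1)|$ times a binomial misclassification probability for $S_{n-k}$, which Berry--Esseen turns into $\gamma_k|u|\Phi(-|u|)/\sqrt n$, followed by maximization over $u$ and $k$. The differences are organizational only. You localize to $O(n^{-1/2})$-windows by invoking the bound of Theorem~\ref{thm:finite_oracle_ineq_p0}(i) directly, whereas the paper re-runs Hoeffding on its exact representation of $D_\pm$. You also obtain the lower bound and both directions of the characterization from a uniform-in-$u$, two-sided version of Proposition~\ref{prop:local_at_1k}(i), whereas the paper uses Slud's inequality for the lower bound and the one-sided bound $\gamma_{k_n}u_n\Phi(-u_n)+2\varepsilon$ for necessity.
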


We outline here the mechanism of the proof, which will reappear in Section~\ref{sec:splitting}. Fix~$p\in[p_0,1)$, write~$m=m(p)$, and set
$$
\Delta_-:=p-\frac{1}{m+1}
\quad
\textrm{ and,  for }m\geq 2, 
\quad
\Delta_+:=\frac1m-p
,
$$
so that~$\Delta_p=\min\{\Delta_-,\Delta_+\}$ when~$m\ge2$, whereas~$\Delta_p=\Delta_-=p-\frac12$ when~$m=1$, the value~$\frac1m=1$ not belonging to~$\mathcal B_{p_0}$. The plug-in rule stops on a success at time~$t(<n)$ if and only if~$\hat p_t<1/(n-t+1)$, whereas the oracle does so if and only if~$p<1/(n-t+1)$; see~(\ref{StoppingTimePlugin}). These two prescriptions can therefore disagree only at times~$t(<n)$ for which~$p$ is close to~$1/(n-t+1)$, and since~$p$ lies in the cell~$[\frac{1}{m+1},\frac1m)$, there are exactly two such times:
$$
t_-:=n-m
\
(\textrm{where the plug-in rule may stop although the oracle would not})
,
$$
$$
t_+:=n-m+1
\
(\textrm{where the plug-in rule may fail to stop although the oracle would})
.
$$
(For~$m=1$, one has~$t_+=n$, where the terminal clause in~(\ref{StoppingTimePlugin}) makes the plug-in rule stop on a success, so that only~$t_-$ is critical.)
At every other time, the discrepancy between~$p$ and~$1/(n-t+1)$ is bounded below by a constant depending only on~$p_0$, so that the corresponding errors have exponentially small probability. Conditioning on~$X_{t_\pm}$ and using~(\ref{eq:gdef}), the losses attached to these two events are
\begin{equation}
\label{eq:g_gap_minus}
g(m)-g(m+1)
=
(m+1)p(1-p)^{m-1}
\Delta_-
=:K_-(p)
\Delta_-
\end{equation}
and
\begin{equation}
\label{eq:g_gap_plus}
g(m)-g(m-1)
=
mp(1-p)^{m-2}\Delta_+
=:K_+(p) \Delta_+
.
\end{equation}
The probability of an incorrect decision, in turn, satisfies
$$
\mathbb P_p\bigl[\pm(\hat p_{t_\pm}-p)\ge\Delta_\pm\bigr]
=
\Phi\Bigl(-\frac{\Delta_\pm\sqrt n}{\sqrt{p(1-p)}}\Bigr)
+
O\Big(\frac{1}{\sqrt{n}}\Big)
,
$$
the error term being provided by the Berry--Esseen theorem \citep[see, e.g.,][Chapter~5]{Petrov1995}. The central limit theorem would give~$\sqrt{t_\pm}$ rather
than~$\sqrt n$ in the argument of~$\Phi$; the two may be interchanged here
because~$t_\pm=n-O(1)$, the critical times lying within~$M_0$ of the horizon. This is the precise sense in which concentration of~$\hat p_t$ propagates to the win probability. 

Near a boundary, the deficit is therefore given, to leading order, by
$$
K_\pm(p)\,\Delta_\pm\,
\Phi\Bigl(-\frac{\Delta_\pm\sqrt{n}}{\sqrt{p(1-p)}}\Bigr)
,
$$
which has to be maximized over~$\Delta_\pm$ and over the boundary at which the worst case occurs. Writing~$\Delta_\pm=u\sqrt{p(1-p)}/\sqrt n$ turns this deficit into
$$
\frac{K_\pm(p)\sqrt{p(1-p)}}{\sqrt n}\ 
u
\Phi(-u)
,
$$
so that the first maximization amounts to maximizing~$u\Phi(-u)$ over~$u>0$: it produces the factor~$\sup_{u>0}u\Phi(-u)=2C_\star$ and confirms that the least favourable parameters lie at distance~$\Delta_\pm\asymp1/\sqrt n$ from the boundary. There remains to maximize the prefactor~$K_\pm(p)\sqrt{p(1-p)}$ over the boundaries~$\frac1k\in\mathcal B_{p_0}$. Since~$\Delta_\pm\to0$ there, this prefactor is evaluated at~$p=\frac1k$, where it equals
\begin{equation}
\label{eq:gammak}
\gamma_k
:=
\Bigl(1-\frac1k\Bigr)^{k-2}\sqrt{\frac1k\Bigl(1-\frac1k\Bigr)}
,
\qquad
\textrm{ with}
\quad
\max_{k\ge2}\gamma_k=\gamma_2=\frac12
.
\end{equation}
Note that the same value~$\gamma_k$ is obtained whether~$\frac1k$ is approached from above or from below: in the first case~$m(p)=k-1$ and the relevant coefficient is~$K_-(\frac1k)=(1-\frac1k)^{k-2}$, in the second~$m(p)=k$ and it is~$K_+(\frac1k)=(1-\frac1k)^{k-2}$. The maximization over~$k$ in~(\ref{eq:gammak}) is elementary. Collecting the two maximizations, the constant in~(\ref{eq:plugin_sharp}) is
$$
\biggl(\, 
\max_{k\ge2}\gamma_k
\biggr)
\biggl(\, 
\sup_{u>0}u\Phi(-u)
\biggr)
=
\frac12
\times
2C_\star
=
C_\star
, 
$$
and the worst case occurs near~$p=\frac12$, where the Bernoulli variance is largest and the oracle hesitates between using the last one and the last two observations.

The analysis behind Theorem~\ref{thm:plugin_sharp} is in fact local at each transition point separately, and this local form is worth isolating: it holds at \emph{every} point of~$\mathcal B_{p_0}$, and it is what will yield the minimax constant. For~$\frac1k\in\mathcal B_{p_0}$ and~$u\in\mathbb R$, write
\begin{equation}
\label{eq:localseq}
\qquad
p_{n,k}(u):=\frac1k+\frac{u\sigma_k}{\sqrt n}
,
\qquad 
\textrm{with }
\
\sigma_k:=\sqrt{\tfrac1k\bigl(1-\tfrac1k\bigr)}
,
\end{equation}
so that~$u$ measures the distance to~$\frac1k$ in units of the local standard deviation, and recall~$\gamma_k$ from~(\ref{eq:gammak}).

\begin{prop}
\label{prop:local_at_1k}
Fix~$p_0\in(0,1)$ and~$\frac1k\in\mathcal B_{p_0}$. Then,
\begin{itemize}
\item[(i)] for every~$u\in\mathbb R$,
\begin{equation}
\label{eq:local_plugin_risk}
\lim_{n\to\infty}\sqrt n\,\bigl(V_n\bigl(p_{n,k}(u)\bigr)-W_n\bigl(p_{n,k}(u)\bigr)\bigr)
=
\gamma_k |u| \Phi(-|u|)
;
\end{equation}
\item[(ii)] for every~$c>0$ and every sequence~$(\pi_n)$ of (possibly randomized) \mbox{$p$-blind} rules,
\begin{eqnarray}
\lefteqn{
\hspace{-30mm}
\lim_{c\to\infty}\ \liminf_{n\to\infty}\ \sup_{|u|\le c}\ \sqrt n\,\bigl(V_n\bigl(p_{n,k}(u)\bigr)-W_n^{\pi_n}\bigl(p_{n,k}(u)\bigr)\bigr)
}
\nonumber
\\[2mm]
& &
\hspace{-25mm}
 \ge
\gamma_k\sup_{u>0}u\Phi(-u)
=
2\gamma_kC_\star
,
\label{eq:local_minimax_1k}
\end{eqnarray}
a bound that the plug-in rule attains, by~(\ref{eq:local_plugin_risk}).
\end{itemize}
\end{prop}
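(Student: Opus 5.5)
The plan is to reduce both parts to what happens at the single critical time $t^*:=n-k+1$. For $p$ slightly above $\frac1k$ we have $m(p)=k-1$ and $t^*=t_-$; for $p$ slightly below $\frac1k$ we have $m(p)=k$ and $t^*=t_+$. So the two sides of the transition point $\frac1k$ are decided by the same stopping decision, taken at a success at time $t^*$. At every other time $t<n$, the gap between $p_{n,k}(u)$ and $\frac1{n-t+1}$ is bounded below by a constant depending only on $p_0$ and $k$.

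\emph{Part (i).} I take $u>0$; the case $u<0$ is symmetric, and for $u=0$ the bound~(\ref{eq:p0_pointwise_distance}) gives a deficit of order $1/n$. Write $p=p_{n,k}(u)$, so that $\Delta_-=u\sigma_k/\sqrt n$ and $\Delta_+$ is of order one.
\begin{itemize}
\item[1.] By Hoeffding's inequality, as in the proof of Theorem~\ref{thm:finite_oracle_ineq_p0}, the events on which the plug-in rule departs from the oracle prescription at some time other than $t^*$ have exponentially small probability.
\item[2.] Outside this event, the plug-in and oracle rules differ exactly on $\{X_{t^*}=1,\ \hat p_{t^*}<\frac1k\}$. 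On that event the plug-in rule stops, whereas the oracle continues and plays the last $k-1$ trials.
\item[3.] Conditioning on $X_{t^*}=1$ and on the past, as in~(\ref{eq:g_gap_minus}), the conditional loss integrates to $K_-(p)\Delta_-$. Conditioning on $X_{t^*}=1$ shifts $\hat p_{t^*}$ only by $O(1/n)$.
\item[4.] Hence $\sqrt n\,(V_n-W_n)=\sqrt n\,K_-(p)\Delta_-\,\mathbb P_p[\hat p_{t^*}<\tfrac1k]+o(1)$.
\item[5.] By Berry--Esseen, and since $t^*=n-O(1)$, we get $\mathbb P_p[\hat p_{t^*}<\frac1k]\to\Phi(-u)$. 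Also $K_-(p)\to(1-\frac1k)^{k-2}$, so $\sqrt n\,K_-(p)\Delta_-\to\gamma_k u$.
\end{itemize}
This gives~(\ref{eq:local_plugin_risk}).

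\emph{Part (ii).} I first want a lower bound on the deficit valid for every, possibly randomized, rule. I would use a performance-difference (dynamic programming) identity. For the oracle value function, $V_n(p)-W_n^\pi(p)$ equals the expected sum, over the times visited by $\pi$, of the regret of $\pi$'s action relative to the oracle's optimal action at that state. All these regrets are nonnegative.
\begin{itemize}
\item Stopping on a success at a time $t\neq t^*$ where the oracle continues costs at least a constant $c_0>0$, uniformly for $|u|\le c$.
\item At $t^*$, stopping when $u>0$ costs $K_-(p)\Delta_-$, and continuing when $u<0$ costs $K_+(p)\Delta_+$. Both equal $(1+o(1))\gamma_k|u|/\sqrt n$, by the computation following~(\ref{eq:gammak}).
\end{itemize}
Let $\phi_n$ be the rule's conditional probability of stopping at a success at $t^*$; it is a function of $X_1,\dots,X_{t^*-1}$ and the internal randomization. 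Then
\[
\sqrt n\,(V_n-W_n^{\pi_n})\ \ge\ \sqrt n\,c_0\,\mathbb P[\text{stop before }t^*]\ +\ p\,\gamma_k|u|\,\mathbb E_p\bigl[\mathbf 1\{u>0\}\phi_n+\mathbf 1\{u<0\}(1-\phi_n)\bigr]\ -\ o(1).
\]
If the probability of stopping before $t^*$ is not $O(1/\sqrt n)$, the first term already makes the bound trivially true. Otherwise we may treat $t^*$ as reached with probability $1-o(1)$; the factor $p$ is the probability that $X_{t^*}=1$, and it is independent of $\phi_n$.

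Now use local asymptotic normality. The binomial experiments $\{\mathbb P_{p_{n,k}(u)}:|u|\le c\}$ restricted to the first $t^*-1$ trials converge, in Le Cam's sense, to the Gaussian shift $\{N(u,1)\}$; here $\sqrt n$ and $\sqrt{t^*}$ are interchangeable. By the asymptotic representation theorem, along a subsequence $\phi_n$ converges to a test $\phi$ in the limit experiment. Bounding the supremum over $|u|\le c$ by the average under the two-point prior on $\pm u_\star$, with $c\ge u_\star$, the Bayes test is $\mathbf 1\{Z<0\}$ by symmetry and monotone likelihood ratio. Its Bayes risk is $u_\star\Phi(-u_\star)=\sup_{u>0}u\Phi(-u)$. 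Multiplying by the loss factor $\gamma_k$ gives~(\ref{eq:local_minimax_1k}) (with the factor $p\to\frac1k$ from $X_{t^*}=1$ carried along as in part (i)).

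The main obstacle is making the regret decomposition rigorous for arbitrary rules. One must check that the history dependence of the rule does not spoil the conditioning at $t^*$. One must also check that the regrets at non-critical times are bounded below uniformly in $n$ and in $|u|\le c$, so that early or late deviations can only increase the deficit. The passage to the Gaussian limit is standard once the bound has been written as a risk linear in $\phi_n$.
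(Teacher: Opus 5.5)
Your proposal is correct in substance and follows the paper's skeleton. Part (i) is the paper's argument: exponential control of every decision time except $t^*$, the exact factorization $K_\pm(p)\Delta_\pm\,\mathbb P[\cdot]$, then Berry--Esseen. Part (ii) uses the paper's three ingredients: no early stopping, a lower bound by the regret of the single decision at $t^*=t_k$, and a two-point Le~Cam argument in the Gaussian limit. The packaging differs in two places.

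\emph{Regret decomposition.} The paper proves Lemma~\ref{lem:switch_k} with an explicit one-step switched rule and treats early stopping separately (Step~1, $q_n\to0$). Your performance-difference identity gives both at once. Your ``main obstacle'' is in fact immediate: $(X_{t+1},\ldots,X_n)$ is independent of $\sigma(X_1,\ldots,X_t,U)$, so the oracle continuation value $V_{n-t}(p)$ does not depend on the history, and the telescoping holds for any randomized rule. The paper's $\tilde\pi$ is the one-step instance of this.

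\emph{Le~Cam step.} The paper stays at finite $n$, bounds $\psi_{p_n^+}+1-\psi_{p_n^-}\ge 1-\mathrm{TV}(Q_n^+,Q_n^-)$, and computes $1-\mathrm{TV}\to2\Phi(-h)$ directly from the log-likelihood CLT (Lemma~\ref{lem:LAN}). You instead extract a limiting test via the asymptotic representation theorem and solve the Bayes problem for $\mathcal N(\pm u_\star,1)$. This is equivalent but heavier (subsequences, representation theorem); the paper's route yields an explicit finite-$n$ inequality.

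Two small points. When bounding the cost of stopping before $t^*$ by $c_0$, include stops on failures; their regret is $V_{n-t}(p)\ge p$. At $u=0$, the bound~(\ref{eq:p0_pointwise_distance}) is exponentially small, not of order $1/n$, though either suffices.

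One bookkeeping error must be fixed, since this proposition is entirely about the constant: you count the factor $p$ twice. On both sides of $\frac1k$ one has $K_\pm(p)=kp(1-p)^{k-2}$, so $K_\pm(p)\Delta_\pm$ already contains $p=\mathbb P[X_{t^*}=1]$. The regret at the state $(t^*,X_{t^*}=1)$ that enters your identity is
\[
k(1-p)^{k-2}\bigl|p-\tfrac1k\bigr|=(1+o(1))\,k\gamma_k|u|/\sqrt n .
\]
Only after multiplying by $p\to\frac1k$ does this become $\gamma_k|u|/\sqrt n$. Your display multiplies $\gamma_k|u|$ by $p$ once more. If that factor is ``carried along'' as you suggest, the result is $\frac1k\gamma_k\sup_{u>0}u\Phi(-u)$, not~(\ref{eq:local_minimax_1k}). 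Correct the coefficient in front of $\mathbb E_p[\cdot]$ to $\gamma_k|u|(1+o(1))$, with no further factor $p$. Your two-point Bayes computation then gives exactly $\gamma_k u_\star\Phi(-u_\star)=2\gamma_kC_\star$.
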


Proposition~\ref{prop:local_at_1k} is the complete local picture. At each oracle discontinuity~$\frac1k$, the plug-in rule has the limiting local risk function~$u\mapsto\gamma_k|u|\Phi(-|u|)$, which is symmetric in~$u$, vanishes at~$u=0$ and as~$|u|\to\infty$, and peaks at~$|u|=u_\star$---the same~$u_\star$ at every~$k$, since the profile depends on the boundary only through the multiplicative factor~$\gamma_k$. That factor is thus the local difficulty of the~$k$th discontinuity, and~(\ref{eq:local_minimax_1k}) says that no \mbox{$p$-blind} rule can reduce it. This also identifies the least favourable sequences of Theorem~\ref{thm:plugin_sharp} as the maximizers~$u=\pm u_\star$ of the local profile at the winning boundary: since~$p_{n,2}(u)=\frac12+\frac{u}{2\sqrt n}$, the condition~$|u|=u_\star$ is exactly~$\sqrt n\,|p_n-\frac12|\to\frac{u_\star}{2}$.

While Part~(i) of Proposition~\ref{prop:local_at_1k} describes the deficit only along the local sequences~(\ref{eq:localseq}), and therefore falls short of Theorem~\ref{thm:plugin_sharp}, whose supremum over~$[p_0,1)$ calls for a control that is uniform in~$p$, Part~(ii) readily yields the following matching lower bound (by taking~$k=2$, for which~$2\gamma_2C_\star=C_\star$;
see Appendix~\ref{appProofsConstants} for details).

\begin{thm}
\label{thm:minimax_sharp}
Fix~$p_0\in(0,\frac12)$. Then, with~$C_\star$ as in~(\ref{eq:Cstar}),
\begin{equation}
\label{eq:minimax_sharp}
\lim_{n\to\infty}
\ \sqrt n\,
\inf_\pi\sup_{p\in[p_0,1)}\bigl(V_n(p)-W_n^\pi(p)\bigr)
\ =\
C_\star
,
\end{equation}
where the infimum is over all (possibly randomized) \mbox{$p$-blind} rules. Consequently, the plug-in rule is asymptotically minimax optimal, with the exact constant.
\end{thm}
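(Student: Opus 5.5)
The proof sandwiches the minimax quantity between the plug-in upper bound of Theorem~\ref{thm:plugin_sharp} and the local lower bound of Proposition~\ref{prop:local_at_1k}(ii) at $k=2$.

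\emph{Upper bound.} Since the plug-in rule is one particular \mbox{$p$-blind} rule,
\[
\inf_\pi\sup_{p\in[p_0,1)}\bigl(V_n(p)-W_n^\pi(p)\bigr)\le\sup_{p\in[p_0,1)}\bigl(V_n(p)-W_n(p)\bigr).
\]
Multiplying by $\sqrt n$ and invoking~(\ref{eq:plugin_sharp}) gives
\[
\limsup_{n\to\infty}\sqrt n\,\inf_\pi\sup_{p\in[p_0,1)}\bigl(V_n(p)-W_n^\pi(p)\bigr)\le C_\star .
\]

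\emph{Lower bound.} Since $p_0<\frac12$, we have $M_0\ge1$, so $\frac12\in\mathcal B_{p_0}$ and $k=2$ is admissible in Proposition~\ref{prop:local_at_1k}. Fix $c>0$. For all $n$ large, every $|u|\le c$ gives $p_{n,2}(u)=\frac12+\frac{u}{2\sqrt n}\in[p_0,1)$, so
\[
\sup_{p\in[p_0,1)}\bigl(V_n(p)-W_n^\pi(p)\bigr)\ge\sup_{|u|\le c}\bigl(V_n(p_{n,2}(u))-W_n^\pi(p_{n,2}(u))\bigr)
\]
for every rule $\pi$.

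For each $n$, pick a $\frac1n$-near-minimizer $\pi_n$ of the global worst-case deficit, i.e.
\[
\sup_{p}\bigl(V_n(p)-W_n^{\pi_n}(p)\bigr)\le\inf_\pi\sup_p\bigl(V_n(p)-W_n^\pi(p)\bigr)+\tfrac1n .
\]
Such a rule exists because the deficit is bounded. Applying~(\ref{eq:local_minimax_1k}) to the sequence $(\pi_n)$ then yields
\[
\liminf_{n\to\infty}\sqrt n\,\inf_\pi\sup_{p\in[p_0,1)}\bigl(V_n(p)-W_n^\pi(p)\bigr)\ge\liminf_{n\to\infty}\sup_{|u|\le c}\sqrt n\,\bigl(V_n(p_{n,2}(u))-W_n^{\pi_n}(p_{n,2}(u))\bigr).
\]
Here the $\frac1n$ slack vanishes after multiplication by $\sqrt n$, and the left-hand side does not depend on $c$. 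Letting $c\to\infty$ bounds it below by $2\gamma_2C_\star=C_\star$, because $\gamma_2=\frac12$.

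Combining the two bounds shows that the limit exists and equals $C_\star$. Together with~(\ref{eq:plugin_sharp}), this means the plug-in rule attains the minimax constant, which is the asserted asymptotic minimax optimality.

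\emph{Main point of care.} The only delicate step is passing from a $\liminf$ over a fixed sequence of rules $(\pi_n)$, as in Proposition~\ref{prop:local_at_1k}(ii), to the infimum over rules at each $n$. The near-minimizer device above handles this. The substantive difficulty, the two-point/local asymptotic argument, is already contained in Proposition~\ref{prop:local_at_1k}(ii), which we take as given.
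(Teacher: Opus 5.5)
Your proposal is correct and follows the paper's proof: the upper bound comes from Theorem~\ref{thm:plugin_sharp} because the plug-in rule is \mbox{$p$-blind}, and the lower bound from restricting to $p_{n,2}(u)$, $|u|\le c$, and applying Proposition~\ref{prop:local_at_1k}(ii) with $k=2$ and $\gamma_2=\frac12$. Your $\frac1n$-near-minimizer device simply spells out the step the paper compresses into ``$(\pi_n)$ being arbitrary''.
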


Theorem~\ref{thm:minimax_sharp} sharpens Theorem~\ref{thmLowerBound} from a rate statement into an exact constant: no oracle-free rule---however sophisticated, and even allowing randomi\-zation---can asymptotically outperform the simple empirical-odds prescription in the worst case over~$[p_0,1)$. The obstruction is moreover entirely local: it is already present in an arbitrarily small neighbourhood of~$\frac12$, so that not even a rule tailored to that single neighbourhood could do better.

Since Theorem~\ref{thm:minimax_sharp} follows from Proposition~\ref{prop:local_at_1k}(ii), it is natural to sketch the argument behind that part, which is where the constant is produced and which explains \emph{why} it takes the value it does. We describe it at~$k=2$, where the notation is lightest; the general case only replaces the decision time~$n-1$ by~$n-k+1$ and the factor~$\frac12$ by~$\gamma_k$.
Near the least favourable parameter~$p=\frac12$, all decisions but one are asymptotically clear-cut, and any rule is characterized by a single binary choice: whether to stop upon observing a success at time~$n-1$. Stopping wins with probability~$1-p$, continuing with probability~$p$, so the correct action is to stop if and only if~$p<\frac12$; taking the incorrect action costs~$|(1-p)-p|=|1-2p|$ in win probability, and the situation itself arises with probability~$p\to\frac12$. Along the local sequences~$p_n^\pm=\frac12\pm\frac{h}{2\sqrt n}$, the associated statistical experiments converge to the Gaussian shift experiment~$\mathcal N(\pm h,1)$, and the rule's choice becomes a test~$\varphi$ between them. Writing~$\beta(h):=\mathbb E_h[\varphi]$, the two local deficits are~$\frac{h}{2\sqrt n}\beta(h)$ and~$\frac{h}{2\sqrt n}(1-\beta(-h))$ up to negligible terms, so that Le~Cam's two-point bound gives

\begin{eqnarray*}
2\max\bigl\{\textrm{deficit}(p_n^+),\textrm{deficit}(p_n^-)\bigr\}
& \gtrsim &
\frac{h}{2\sqrt n}\bigl[\beta(h)+1-\beta(-h)\bigr]
\\[2mm]
& \ge &
\frac{h}{2\sqrt n}\Bigl[1-\mathrm{TV}\bigl(\mathcal N(h,1),\mathcal N(-h,1)\bigr)\Bigr]
\\[2mm]
& = &
\frac{h\,\Phi(-h)}{\sqrt n}
,
\end{eqnarray*}
and optimizing over~$h$ produces~$\gamma_2\sup_{h>0}h\Phi(-h)=2\gamma_2C_\star=C_\star$, which is~(\ref{eq:local_minimax_1k}) at~$k=2$. The bound is attained by the likelihood-ratio test cutting at the midpoint~$0$ of the two hypotheses, which is precisely what the plug-in rule implements, since it stops at~$n-1$ on a success if and only if~$\hat p_{n-1}<\frac12$. The two occurrences of the quantity~$\sup_{u>0}u\Phi(-u)$---in Theorem~\ref{thm:plugin_sharp} through the maximization of~$\Delta\Phi(-\Delta\sqrt n/\sigma)$, and here through a Gaussian testing bound---are therefore two faces of the same phenomenon.

We stress that Theorems~\ref{thm:plugin_sharp}--\ref{thm:minimax_sharp} do not render
Theorems~\ref{thm:finite_oracle_ineq_p0}--\ref{thmLowerBound} superfluous; the two pairs answer
different questions, and the results of this section in fact \emph{rest} on those of
Section~\ref{sec:oracle_bounds}. The present theorems are limit statements: they identify the
constant that governs the worst case as~$n\to\infty$, but they are silent at any fixed horizon,
they describe only the supremum over~$p$, and they require~$p_0<\frac12$. Theorem~\ref{thm:finite_oracle_ineq_p0},
by contrast, holds for every~$n\ge2$ and every~$p_0\in(0,1)$---including~$p_0>\frac12$, where the
deficit is exponentially small and~$C_\star$ plays no role---and its pointwise form quantifies the
deficit at each individual~$p$, which is what makes it usable as an input elsewhere: it is through
Theorem~\ref{thm:finite_oracle_ineq_p0} that the uniform convergence result of
Section~\ref{sec:sparse} and the~$p\asymp1/n$ barrier are obtained. Moreover, the proof of
Theorem~\ref{thm:plugin_sharp} relies on the exponential controls underlying
Theorem~\ref{thm:finite_oracle_ineq_p0} to discard all decision times but~$t_-$ and~$t_+$, and
Theorem~\ref{thmLowerBound}---a two-point argument requiring no local asymptotics, and valid on the
whole range where a non-trivial bound exists---remains the tool of choice whenever rate optimality
suffices, as for the sample-splitting rules of Section~\ref{sec:splitting}. The sharp constants of
this section should therefore be read as a refinement of Section~\ref{sec:oracle_bounds} along one
particular axis, namely the worst case over~$[p_0,1)$ with~$p_0<\frac12$ as~$n\to\infty$, and not as
a replacement for it.



\section{Sample-splitting rules: the price of freezing the estimate}
\label{sec:splitting}

Sections~\ref{sec:oracle_bounds}--\ref{sec:constants} have settled the behaviour of the plug-in rule: for~$p_0\in(0,\frac12)$, it is minimax rate-optimal on~$[p_0,1)$, and in fact asymptotically minimax optimal with the exact constant~$C_\star$. One may wonder how much of this owes to the specific sequential design of~$\hat\pi$. Indeed, $\hat\pi$ refreshes~$\hat p_t$ at every time~$t$, so that the quantity driving the stopping decision is itself a function of the trajectory on which the decision is taken; this entanglement between estimation and stopping is what made the proofs of Theorems~\ref{thm:finite_oracle_ineq_p0} and~\ref{thm:plugin_sharp} delicate, forcing us respectively to pass to a predictable version of~$\hat p_t$ and to show that only the two decision times~$t_\pm$ matter. It is therefore natural to examine the design in which the two operations are deliberately decoupled: spend an initial fraction of the horizon on estimation only, freeze the resulting estimate, and then run the oracle algorithm with that frozen value. This is the optimal-stopping analogue of sample splitting, arguably the first rule a statistician would write down, and hence a natural candidate to compete with~$\hat\pi$; freezing the estimate moreover removes the entanglement just described, which makes the resulting rule far easier to analyse. It does not compete, however: we show that it is rate-optimal but never constant-optimal, and Theorem~\ref{thm:minimax_sharp} allows us to quantify its minimax deficiency exactly.

Formally, fix~$a\in(0,1)$ and let
$$
\hat p^{(a)}:=\frac{S_{t_a}}{t_a}
,
\quad
\textrm{ with }
\ 
t_a:=\lfloor an\rfloor
.
$$
The rule~$\pi_a$ spends the times~$\{1,\ldots,t_a\}$ on estimation only and, from time~$t_a+1$ onward, applies the sum-the-odds algorithm with the \emph{frozen} estimate~$\hat p^{(a)}$; that is, $\pi_a$ is associated with the stopping time
\begin{equation}
\label{StoppingTimeSplit}
\tau_n^{(a)}
:=
\inf
\biggl\{
t\in\{t_a+1,\dots,n\}:\ X_t=1
\ \text{ and }\
\Bigl(
\hat p^{(a)}<\frac{1}{n-t+1}
\ \text{ or }\
t=n
\Bigr)
\biggr\}
,
\end{equation}
with $\inf\varnothing:=+\infty$. As for~$\hat\pi$, the terminal clause makes~$\pi_a$ stop at time~$n$ on a success if it has not stopped earlier. Clearly, $\pi_a$ is~\mbox{$p$-blind}. We write~$W_n^{\pi_a}(p)$ for its win probability, as defined in~(\ref{eq:Wpi_def}).

The decisive structural feature of~$\pi_a$ is that~$\hat p^{(a)}$ is~$\sigma(X_1,\ldots,X_{t_a})$-measurable, hence independent of the observations~$X_{t_a+1},\ldots,X_n$ on which the rule actually operates. Conditionally on~$\hat p^{(a)}$, the rule is therefore an \emph{oracle-type threshold rule with a deterministic threshold}, run on i.i.d.\ data independent of that threshold. Writing
\begin{equation}
\label{eq:La_def}
L_j
:=
\max\biggl\{1,\min\Bigl\{n-t_a,\ \Bigl\lceil\frac{t_a}{j}\Bigr\rceil-1\Bigr\}\biggr\}
,
\qquad
j=0,1,\ldots,t_a
,
\end{equation}
with the convention~$\lceil t_a/0\rceil:=+\infty$ (so that~$L_0=n-t_a$), the rule~$\pi_a$ will stop, when~$S_{t_a}=j$, at the first success (if any) in the terminal block~$\{n-L_j+1,\ldots,n\}$: the sum-the-odds prescription based on~$\hat p^{(a)}=j/t_a$ opens the window at time~$n-\lceil t_a/j\rceil+2$, the truncation at~$n-t_a$ enforces that no stopping occurs before~$t_a+1$, and the terminal clause guarantees a window of length at least one. This yields the compact representation
\begin{equation}
\label{eq:Wa_compact}
W_n^{\pi_a}(p)
=
\mathbb E_p[g(L_{S_{t_a}})]
,
\end{equation}
with~$g$ as in~(\ref{eq:gdef}). Expanding the expectation against the~${\rm Bin}(t_a,p)$ probability mass function of~$S_{t_a}$ turns~(\ref{eq:Wa_compact}) into a closed-form expression, evaluable in~$O(n)$ arithmetic operations and showing that~$p\mapsto W_n^{\pi_a}(p)$ is a polynomial; see Theorem~\ref{thm:exact_Wa}. This is in marked contrast with the dynamic-programming recursion required for~$\hat\pi$ (Theorem~\ref{thm:exact_Wn_plugin}).

We now turn to the oracle bound. Since~$V_n(p)=g(m(p))=\max_{\ell\ge1}g(\ell)$ whenever~$n\ge m(p)$---which, for~$p\in[p_0,1)$, holds as soon as~$n>M_0$, since~$m(p)\le M_0$ there---(\ref{eq:Wa_compact}) expresses the oracle deficit, for such~$n$, as
\begin{equation}
\label{eq:deficit_as_expectation}
V_n(p)-W_n^{\pi_a}(p)
=
\mathbb E_p[g(m(p))-g(L_{S_{t_a}})]
\ \ (\ge0)
,
\end{equation}
so that the deficit is entirely governed by the event that the frozen estimate falls in a different cell~$[\frac{1}{k+1},\frac1k)$ than~$p$ does. This is the mechanism announced in the introduction, in its simplest form: a concentration statement on~$\hat p^{(a)}$ propagates to the win probability through the loss~$g(m(p))-g(\ell)$ incurred by an incorrect cell. We have the following result.

\begin{thm}
\label{thm:oracle_split}
Fix~$p_0\in(0,1)$ and~$a\in(0,1)$, and let~$M_0$, $\mathcal B_{p_0}$ and~$\Delta_p$ be as in Theorem~\ref{thm:finite_oracle_ineq_p0}. Then, (i) there exist positive constants~$C_1(p_0)$, $C_2(p_0,a)$ and~$c(p_0,a)$ such that, for all~$n\ge2$ and all~$p\in[p_0,1)$,
\begin{equation}
\label{eq:split_pointwise}
V_n(p)-W_n^{\pi_a}(p)
\le
C_1(p_0)\,\Delta_p\,e^{-2\lfloor an\rfloor\Delta_p^2}
+
C_2(p_0,a)\,e^{-c(p_0,a)n}
.
\end{equation}
(ii) Consequently, for~$p_0\le\frac12$ there exists a positive constant~$C(p_0)$ such that, for all~$n$ large enough,
\begin{equation}
\label{eq:split_uniform}
\sup_{p\in[p_0,1)}\bigl(V_n(p)-W_n^{\pi_a}(p)\bigr)
\le
\frac{C(p_0)}{\sqrt{an}}
.
\end{equation}
\end{thm}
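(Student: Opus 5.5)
The argument starts from the representation~(\ref{eq:deficit_as_expectation}), valid once $n>M_0$; for the finitely many $n\le M_0$, the bound~(\ref{eq:split_pointwise}) holds trivially by enlarging $C_2(p_0,a)$, since the deficit is at most one. Fix $p\in[p_0,1)$ and write $m=m(p)$, so $p\in[\frac1{m+1},\frac1m)$. I split the expectation according to the value of $L:=L_{S_{t_a}}$, using three facts. First, $g(m)-g(L)=0$ when $L=m$. Second, the near-miss windows $L=m\pm1$ cost exactly the gaps~(\ref{eq:g_gap_minus})--(\ref{eq:g_gap_plus}). Third, every other $L$ costs at most $1$.

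The first step is to relate the events $\{L=\ell\}$ to deviations of $\hat p^{(a)}$. Take $n$ large enough that $n-t_a\ge n(1-a)-1>M_0+1$. Then, with $j=S_{t_a}$, we have $L_j\ge m+1$ iff $\lceil 1/\hat p^{(a)}\rceil-1\ge m+1$. This happens iff $\hat p^{(a)}\le \frac1{m+1}$ (including $j=0$), which forces $\hat p^{(a)}-p\le-\Delta_-$. Likewise, $L_j\le m-1$ iff $\hat p^{(a)}>\frac1m$, i.e.\ $\hat p^{(a)}-p>\Delta_+$. More precisely, $L=m+1$ requires $\hat p^{(a)}\in(\frac1{m+2},\frac1{m+1}]$, and $L\ge m+2$ requires $\hat p^{(a)}\le\frac1{m+2}$, i.e.\ a deviation of at least $p-\frac1{m+2}\ge\frac1{(m+1)(m+2)}\ge\frac1{(M_0+1)(M_0+2)}$, a constant depending on $p_0$ only. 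Symmetrically, $L\le m-2$ requires $\hat p^{(a)}>\frac1{m-1}$, a deviation of at least $\frac1{m(m-1)}\ge \frac1{M_0^2}$; for $m=1$ the event $L\le m-1$ is empty because of the terminal clause. Hoeffding's inequality for $S_{t_a}\sim{\rm Bin}(t_a,p)$ then gives
\[
V_n(p)-W_n^{\pi_a}(p)\le K_-(p)\Delta_-e^{-2t_a\Delta_-^2}+\mathbf 1_{\{m\ge2\}}K_+(p)\Delta_+e^{-2t_a\Delta_+^2}+2e^{-2t_a/(M_0+1)^4}.
\]
Here $K_\pm(p)\le M_0+1$. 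The last term is $\le C_2e^{-cn}$ with $c$ depending on $(p_0,a)$ since $t_a\ge an-1$.

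The main obstacle is passing from the two terms in $\Delta_\pm$ to a single term in $\Delta_p=\min\{\Delta_-,\Delta_+\}$ with coefficient $C_1(p_0)$ only. I would handle it as follows. The function $x\mapsto xe^{-2t_ax^2}$ is not monotone, so the larger of $\Delta_-,\Delta_+$ cannot simply be dominated by the smaller. However, the larger one is at least half the cell width, $\frac12\bigl(\frac1m-\frac1{m+1}\bigr)\ge\frac1{2M_0(M_0+1)}$. Its term is therefore at most $e^{-2t_a/(4M_0^2(M_0+1)^2)}$, which is absorbed into $C_2(p_0,a)e^{-c(p_0,a)n}$. For $m=1$, only $\Delta_-=\Delta_p$ appears. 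Care is needed at the bottom cell. When $m=M_0$, the point $\frac1{M_0+1}$ belongs to $\mathcal B_{p_0}$, so $\Delta_-$ is indeed a distance to $\mathcal B_{p_0}$. Also $\frac1{m+1}\in\mathcal B_{p_0}$ always, and $\frac1m\in\mathcal B_{p_0}$ for $m\ge2$, so $\Delta_p=\min\{\Delta_-,\Delta_+\}$ as stated in the text. This gives (i) with exponent $2\lfloor an\rfloor\Delta_p^2$.

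For (ii), with $p_0\le\frac12$, I maximize $\Delta\mapsto\Delta e^{-2t_a\Delta^2}$ over $\Delta\ge0$. The maximum is attained at $\Delta=(4t_a)^{-1/2}$ and gives the value $(4et_a)^{-1/2}$. Hence the supremum of the deficit is at most $C_1(p_0)(4e\lfloor an\rfloor)^{-1/2}+C_2e^{-cn}$. For $n$ large, $\lfloor an\rfloor\ge an/2$, and the exponential term is $\le(an)^{-1/2}$. Together these yield~(\ref{eq:split_uniform}) with a constant $C(p_0)$, for instance $C_1(p_0)+1$, depending only on $p_0$, as claimed; the dependence on $a$ sits only in the threshold "$n$ large enough".
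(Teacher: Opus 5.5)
Your proof is correct and follows the paper's argument essentially step by step: you decompose the deficit by the value of $L_{S_{t_a}}$, bound the neighbouring cells $m\pm1$ by the exact gaps $K_\pm\Delta_\pm$ times Hoeffding tails, bound all other cells (clipping and $\hat p^{(a)}=1$ included) by an exponentially small term, absorb the larger of $\Delta_\pm$ using the cell width, and maximize $\Delta e^{-2t_a\Delta^2}$ for (ii). The remaining slips are cosmetic and do not affect the bound. A few boundary inequalities are flipped; for instance, $\{L\le m-1\}=\{\hat p^{(a)}\ge\frac1m\}$, not $\{\hat p^{(a)}>\frac1m\}$. Also, the deviation $\frac{1}{(M_0+1)(M_0+2)}$ yields the exponent $2t_a/\{(M_0+1)(M_0+2)\}^2$ rather than $2t_a/(M_0+1)^4$.
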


Theorem~\ref{thm:oracle_split} has the same structure as Theorem~\ref{thm:finite_oracle_ineq_p0}, with one difference that is the whole point: the horizon~$n$ multiplying~$\Delta_p^2$ in the first exponent, and with it the resulting rate, has been replaced by the \emph{effective sample size}~$\lfloor an\rfloor$ on which the frozen estimate is built. The explicit constant in this exponent matters here: had it been left unspecified, $a$ could have been absorbed into it, whereas as stated it propagates to the~\mbox{$a$-free} constant in~(\ref{eq:split_uniform}). Since~$a$ is fixed, $\pi_a$ still attains the~$1/\sqrt n$ rate, so that, by Theorem~\ref{thmLowerBound}, it is minimax rate-optimal on~$[p_0,1)$ for every~$a\in(0,1)$. At the resolution of Section~\ref{sec:oracle_bounds}, then, sample splitting is indistinguishable from sequential updating. It is only at the resolution of Section~\ref{sec:constants} that the two designs separate, and the following result---the analogue for~$\pi_a$ of Theorem~\ref{thm:plugin_sharp}---shows that the inflation by~$1/\sqrt a$ in~(\ref{eq:split_uniform}) is genuine rather than an artifact of the proof.

\begin{thm}
\label{thm:split_sharp}
Fix~$p_0\in(0,\frac12)$ and~$a\in(0,1)$, and let~$C_\star$ and~$u_\star$ be as in Theorem~\ref{thm:plugin_sharp}. Then,
\begin{equation}
\label{eq:split_sharp}
\lim_{n\to\infty}
\ \sqrt{an}
\sup_{p\in[p_0,1)}\bigl(V_n(p)-W_n^{\pi_a}(p)\bigr)
\ =\
C_\star
,
\end{equation}
and, for any~$[p_0,1)$-valued sequence~$(p_n)$, one has~$\sqrt{an}\bigl(V_n(p_n)-W_n^{\pi_a}(p_n)\bigr)\to C_\star$ if and only if~$\sqrt{an}\,\bigl|p_n-\tfrac12\bigr|\to\tfrac{u_\star}{2}$.
\end{thm}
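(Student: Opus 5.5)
The plan is to work directly from the representation~(\ref{eq:deficit_as_expectation}). Since $\hat p^{(a)}$ is independent of the stopping phase, the deficit equals $\mathbb E_p[g(m(p))-g(L_{S_{t_a}})]$, and no predictable switch time is needed. Fix $p\in[p_0,1)$ and write $m=m(p)$.

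\emph{Step 1: reduction to the two adjacent cells.} For $n>M_0$ and $n$ large, we have $L_j=\lceil t_a/j\rceil-1=m(j/t_a)$ whenever $j/t_a\ge p_0/2$. Hence $g(L_{S_{t_a}})=g(m(\hat p^{(a)}))$ off an event of probability $e^{-c t_a}$. The estimate $\hat p^{(a)}$ falls outside the cells adjacent to that of $p$ only if $|\hat p^{(a)}-p|$ exceeds a gap bounded below by a constant depending on $p_0$. By Hoeffding's inequality with $t_a=\lfloor an\rfloor$, this contributes $O(e^{-c(p_0,a)n})$ uniformly in $p$, exactly as in the proof of Theorem~\ref{thm:oracle_split}. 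What remains is
\[
V_n(p)-W_n^{\pi_a}(p)=K_-(p)\Delta_-\,\mathbb P_p[\hat p^{(a)}<\tfrac{1}{m+1}]+K_+(p)\Delta_+\,\mathbb P_p[\hat p^{(a)}\ge\tfrac1m]+O(e^{-cn}),
\]
with $K_\pm$ as in~(\ref{eq:g_gap_minus})--(\ref{eq:g_gap_plus}). The second term is absent when $m=1$.

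\emph{Step 2: Berry--Esseen.} I will apply Berry--Esseen to ${\rm Bin}(t_a,p)$ uniformly over $p\in[p_0,1-\epsilon]$. The range $p$ near $1$ lies in the cell $m=1$ and is far from $\frac12$, so there Hoeffding suffices. This gives
\[
\mathbb P_p[\pm(\hat p^{(a)}-p)\ge\Delta_\pm]=\Phi\bigl(-\Delta_\pm\sqrt{t_a}/\sqrt{p(1-p)}\bigr)+O(1/\sqrt n).
\]
The $O(1/\sqrt n)$ error is multiplied by $K_\pm\Delta_\pm$. It is therefore negligible at scale $1/\sqrt n$ once $\Delta_\pm\to0$. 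When $\Delta_\pm\ge n^{-1/3}$, say, I will instead use the Hoeffding bound $\Delta e^{-2t_a\Delta^2}=o(n^{-1/2})$. The ties at lattice points of $\hat p^{(a)}$ are absorbed in the same error.

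\emph{Step 3: uniform maximization.} Substitute $\Delta_\pm=u\sqrt{p(1-p)}/\sqrt{t_a}$, so that each term is
\[
\frac{K_\pm(p)\sqrt{p(1-p)}}{\sqrt{t_a}}\,u\Phi(-u)+o(n^{-1/2})
\]
uniformly. For $\Delta_\pm\le n^{-1/3}$, $p$ is within $n^{-1/3}$ of some $\frac1k$, and by continuity $K_\pm(p)\sqrt{p(1-p)}=\gamma_k+o(1)$. Only one of the two terms is non-negligible at a time, since $\Delta_-+\Delta_+$ is bounded below. Taking the supremum gives $\sqrt{t_a}\sup_p(\cdot)\to\max_k\gamma_k\cdot\sup_u u\Phi(-u)=\tfrac12\cdot2C_\star=C_\star$, using~(\ref{eq:gammak}). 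Since $\sqrt{t_a}/\sqrt{an}\to1$, this yields~(\ref{eq:split_sharp}). The lower bound is attained along $p_n=\frac12\pm u_\star/(2\sqrt{an})$.

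\emph{Step 4: the characterization of the extremal sequences.} I will argue by subsequences. If $\sqrt{an}(V_n-W_n^{\pi_a})(p_n)\to C_\star$, then $p_n$ must approach a point $\frac1k$ with $\gamma_k=\frac12$. We need $\gamma_k<\frac12$ strictly for $k\ge3$, which is the elementary maximization in~(\ref{eq:gammak}). Moreover, the normalized distance must converge to a maximizer of $u\Phi(-u)$, which is unique by Theorem~\ref{thm:plugin_sharp}. This gives $\sqrt{an}|p_n-\frac12|\to u_\star/2$, using $\sigma_2=\frac12$. The converse follows from the Step 3 expansion.

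The main obstacle is Step 3's uniformity: making the Berry--Esseen and continuity errors uniform over $p$, including near the cell boundaries. This needs the two-regime split of $\Delta$; everything else is bookkeeping.
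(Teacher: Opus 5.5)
Your proposal is correct and follows essentially the paper's proof. It uses the same reduction of~(\ref{eq:deficit_as_expectation}) to the two adjacent-cell terms $K_\mp(p)\Delta_\mp\,\mathbb P_p[\cdot]$ plus an exponentially small remainder, Hoeffding far from $\mathcal B_{p_0}$, Berry--Esseen with Lipschitz continuity of $K_\pm(p)\sigma_p$ near it, maximization over $k$ and $u$, and the same argument via $\gamma_k<\gamma_2$ and uniqueness of $u_\star$ for the extremal sequences. The differences are minor: your $n^{-1/3}$ cutoff replaces the paper's $\Lambda(\varepsilon)/\sqrt t$ split, and your two-sided Berry--Esseen expansion gives the lower bound where the paper uses Slud's inequality. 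The one imprecision is harmless: on $\{\hat p^{(a)}=1\}$, which is not exponentially rare for $p$ near $1$, one has $L_{t_a}=1\neq m(1)=0$, yet your displayed two-term decomposition is still exact up to $O(e^{-cn})$.
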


Theorem~\ref{thm:split_sharp} is the exact analogue, for~$\pi_a$, of Theorem~\ref{thm:plugin_sharp}: the very same constant~$C_\star$ appears, and the very same parameters---those lying on either side of~$\frac12$, at distance~$u_\star/2$ from it once rescaled---are least favourable; only the effective sample size differs, being~$\lfloor an\rfloor$ instead of~$n$, which is precisely what makes that distance larger for~$\pi_a$. The proof, given in Appendix~\ref{secProofsSampleSplit}, follows the scheme outlined after Theorem~\ref{thm:plugin_sharp}, but is markedly simpler: by~(\ref{eq:deficit_as_expectation}) the deficit of~$\pi_a$ is a \emph{single} binomial expectation, so that a one-dimensional normal approximation suffices, whereas for~$\hat\pi$ one had first  to show that only the two decision times~$t_\pm$ matter. In both cases, Hoeffding's inequality---which suffices for the rates in Theorems~\ref{thm:finite_oracle_ineq_p0} and~\ref{thm:oracle_split}---is too lossy in the polynomial factor, and the parameter space is split at distance~$\Lambda/\sqrt n$ from~$\mathcal B_{p_0}$: beyond that distance a Hoeffding bound already contributes~$\le\varepsilon/\sqrt n$ for~$\Lambda=\Lambda(\varepsilon)$ large, while within it a Berry--Esseen bound applies, its~$O(1/\sqrt n)$ error being multiplied by a loss of order~$1/\sqrt n$ and hence negligible.

Combining Theorems~\ref{thm:plugin_sharp} and~\ref{thm:split_sharp} settles the comparison between the two designs.

\begin{co}
\label{co:ratio}
Fix~$p_0\in(0,\frac12)$ and~$a\in(0,1)$. Then,
\begin{equation}
\label{eq:constant_comparison}
\lim_{n\to\infty}
\frac{\sup_{p\in[p_0,1)}\bigl(V_n(p)-W_n^{\pi_a}(p)\bigr)}
      {\sup_{p\in[p_0,1)}\bigl(V_n(p)-W_n(p)\bigr)}
=
\frac{1}{\sqrt a}
\ >\ 1
.
\end{equation}
In particular, the worst-case deficit of every sample-splitting rule~$\pi_a$, $a\in(0,1)$, over~$[p_0,1)$ is asymptotically larger than that of the plug-in rule, by the factor~$1/\sqrt a$. Equivalently, in view of Theorem~\ref{thm:minimax_sharp}, no~$\pi_a$ with~$a<1$ is asymptotically minimax optimal, its asymptotic minimax deficiency being~$1/\sqrt a$.
\end{co}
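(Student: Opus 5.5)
The plan is to deduce the corollary directly from Theorems~\ref{thm:plugin_sharp} and~\ref{thm:split_sharp}, with no new probabilistic input. Write $D_n:=\sup_{p\in[p_0,1)}(V_n(p)-W_n(p))$ and $D_n^{(a)}:=\sup_{p\in[p_0,1)}(V_n(p)-W_n^{\pi_a}(p))$. Theorem~\ref{thm:plugin_sharp} gives $\sqrt n\,D_n\to C_\star$, and Theorem~\ref{thm:split_sharp} gives $\sqrt{an}\,D_n^{(a)}\to C_\star$.

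Since $C_\star>0$ (for instance $u\Phi(-u)>0$ at $u=1$), we have $D_n>0$ for all large $n$, so the ratio is well defined eventually. I would then write
\[
\frac{D_n^{(a)}}{D_n}=\frac{1}{\sqrt a}\cdot\frac{\sqrt{an}\,D_n^{(a)}}{\sqrt n\,D_n}
\]
and conclude that it tends to $\frac{1}{\sqrt a}\cdot\frac{C_\star}{C_\star}=\frac{1}{\sqrt a}$, by the algebra of limits. Because $a\in(0,1)$, we have $1/\sqrt a>1$, which gives the first claim.

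For the minimax reformulation, I would combine this limit with Theorem~\ref{thm:minimax_sharp}. That theorem shows that $\sqrt n\,\inf_\pi\sup_p(V_n-W_n^\pi)\to C_\star$. Together with Theorem~\ref{thm:split_sharp}, this gives
\[
\frac{D_n^{(a)}}{\inf_\pi\sup_{p}(V_n-W_n^\pi)}\to \frac{1}{\sqrt a}.
\]
In particular, $\sqrt n\,D_n^{(a)}\to C_\star/\sqrt a>C_\star$, so $\pi_a$ is not asymptotically minimax optimal.

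The only point needing care is that the denominator is nonzero for large $n$, and positivity of $C_\star$ settles it. All the substance lies in the two sharp theorems, which we take as given, so I expect no real obstacle.
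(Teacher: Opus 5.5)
Your proposal is correct and matches the paper's argument: the paper likewise obtains the corollary by dividing the two sharp limits of Theorems~\ref{thm:plugin_sharp} and~\ref{thm:split_sharp}, and the minimax reformulation follows the same way from Theorem~\ref{thm:minimax_sharp}. Your use of $C_\star>0$ to make each denominator eventually positive handles the only technical point.
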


Table~\ref{tab:constants} illustrates numerically Theorems~\ref{thm:plugin_sharp} and~\ref{thm:split_sharp}, as well as Corollary~\ref{co:ratio}.

\begin{table}[h!]
\hspace{-10mm}
\begin{tabular}{r|c|ccc|ccc}
& $\sqrt n\,\sup_{p\in[p_0,1)}(V_n-W_n)$ & \multicolumn{3}{c|}{$\sqrt{an}\,\sup_{p\in[p_0,1)}(V_n-W_n^{\pi_a})$} & \multicolumn{3}{c}{ratio to $\hat\pi$} \\
$n$ & $\hat\pi$ & $a=\frac14$ & $a=\frac12$ & $a=\frac34$ & $a=\frac14$ & $a=\frac12$ & $a=\frac34$ \\
\hline
$400$  & $0.0875$ & $0.1140$ & $0.0881$ & $0.0876$ & $2.606$ & $1.424$ & $1.156$ \\
$800$  & $0.0867$ & $0.0881$ & $0.0872$ & $0.0869$ & $2.032$ & $1.422$ & $1.156$ \\
$1600$ & $0.0862$ & $0.0873$ & $0.0866$ & $0.0863$ & $2.024$ & $1.421$ & $1.156$ \\
$3200$ & $0.0859$ & $0.0866$ & $0.0862$ & $0.0859$ & $2.018$ & $1.419$ & $1.156$ \\
\hline
$1/\sqrt a$ & --- & --- & --- & --- & $2.000$ & $1.414$ & $1.155$
\end{tabular}
\caption{Worst-case oracle deficits over~$p\in[p_0,1)$, with~$p_0=0.2$, scaled by the square root of the corresponding effective sample size ($n$ for the plug-in rule~$\hat\pi$, $an$ for the sample-splitting rule~$\pi_a$). All scaled deficits approach the common value~$C_\star=0.08498\ldots$ of~(\ref{eq:Cstar}) (Theorems~\ref{thm:plugin_sharp} and~\ref{thm:split_sharp}), and the last three columns, which report the ratio of the worst-case deficit of~$\pi_a$ to that of~$\hat\pi$, converge to~$1/\sqrt a$ (Corollary~\ref{co:ratio}).}
\label{tab:constants}
\end{table}

Three comments are in order. First, sample splitting is never advantageous here: by Corollary~\ref{co:ratio}, the worst-case deficit of~$\pi_a$ exceeds that of~$\hat\pi$ by the factor~$1/\sqrt a$, irrespective of~$p_0$, and the natural sequential design is thereby vindicated. Second, the loss vanishes as~$a\to 1$, but not uniformly in~$n$: the decision window~$\{t_a+1,\ldots,n\}$ must contain the oracle window, which requires~$(1-a)n\gtrsim M_0$, so that~$a$ may be taken close to~$1$ only for large~$n$; no single~$a<1$ closes the gap. Third, the rules~$\hat\pi$ and~$\pi_a$ fail in exactly the same way---at the same least favourable parameters, through the same boundary decisions, with the same constant~$C_\star$ of Section~\ref{sec:constants}---and differ only through the number of observations available when those decisions are taken. The comparison therefore isolates, in a single scalar, the value of the information that a sequential rule keeps accumulating while it waits; and since Theorem~\ref{thm:minimax_sharp} identifies~$C_\star$ as the minimax constant, that scalar is exactly the asymptotic minimax deficiency of sample splitting.

Figure~\ref{FigSplit} shows the same comparison pointwise rather than in the worst case. At~$n=400$, the curves are ordered by~$a$ over most of~$[p_0,1)$, with that of~$\hat\pi$ lowest, and all of them dip sharply at the boundary points~$\frac1k$, where~$g(k)=g(k-1)$ makes the two candidate values of~$m(p)$ equally good, so that misclassifying~$m(p)$ costs almost nothing. Two features temper the picture. First, the ordering is one of suprema, not a pointwise one: near the boundary points, where all deficits are close to zero, the curves may cross (consistently with Theorem~\ref{thm:no_uniform_opt}, $\hat\pi$ does not uniformly dominate~$\pi_a$: there are parameter values at which~$\pi_a$ wins with the larger probability). Second, it is asymptotic, and is not yet in force at small horizons: at~$n=10$, the curve of~$\pi_{3/4}$ lies below that of~$\hat\pi$ on a substantial part of the range and its worst-case deficit over~$[p_0,1)$ is even marginally the smaller of the two. It is only once~$\lfloor an\rfloor$ is large that the frozen estimate becomes the binding constraint and the~$1/\sqrt a$ ordering of Corollary~\ref{co:ratio} emerges.

\begin{figure}[h!]
\makebox[\textwidth][c]{\includegraphics[width=1.05\textwidth]{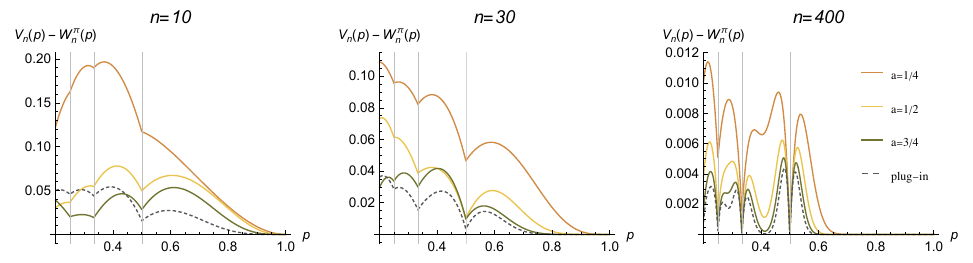}}
\caption{%
Oracle deficits~$V_n(p)-W_n^{\pi_a}(p)$ over~$p\in[p_0,1)$, with~$p_0=0.2$, for the
sample-splitting rules with~$a=1/4$, $1/2$, $3/4$ and horizons~$n=10$ (left), $30$ (center)
and~$400$ (right) (note the different vertical scales across panels); the dashed curve is the deficit~$V_n(p)-W_n(p)$ of the plug-in
rule~$\hat\pi$. The vertical grey lines mark the boundary points~$1/k$
of~$\mathcal B_{p_0}$. In the
right panel, the suprema of the four curves are in the ratios predicted by
Corollary~\ref{co:ratio}, in agreement with the first row of
Table~\ref{tab:constants}.}
\label{FigSplit}
\end{figure}


\section{The sparse regime and the $p\asymp 1/n$ barrier}
\label{sec:sparse}

So far, the parameter set has been of the form~$[p_0,1)$, with~$p_0>0$ fixed. We now dispense with this restriction and determine what can be achieved uniformly over the whole of~$(0,1)$. The essential step is the sparse regime, in which the success probability vanishes with the horizon while the expected number of successes still diverges, that is, $p=p_n\to0$ with~$np_n\to\infty$. There, the oracle win probability~$V_n(p_n)$ is known to converge to~$1/e$, and the question is whether the plug-in rule attains this benchmark although it learns~$p_n$ from the data alone. We answer positively, by establishing an explicit rate of decay for the deficit~$V_n(p_n)-W_n(p_n)$. Combining this with the finite-horizon oracle bounds of Section~\ref{sec:oracle_bounds} then yields a broad uniform convergence statement, which we finally show to be maximal: no~\mbox{$p$-blind} rule can satisfy a broader one, the window~$p\asymp1/n$ forming a genuine barrier.

\subsection{The sparse regime}
\label{sec:sparsecase}

Consider the asymptotic scenario associated with a sequence~$(p_n)$ in~$(0,1)$ such that~$p_n\to 0$ and~$n p_n\to\infty$.
For~$n\geq m_n:=\lceil \frac{1}{p_n}\rceil-1$, the win probability of the $p_n$-oracle rule is
$
V_n(p_n)
=
m_n p_n (1-p_n)^{m_n-1}
$.
It can then be shown\footnote{For the sake of completeness, we prove this in Appendix~\ref{secProofsSection6}.} that there exists a positive constant~$C$ such that, for all~$n$ with~$n\geq m_n$ and~$p_n\leq 1/2$, we have
\begin{equation}
\label{eq:Vn-rate}
\Big|V_n(p_n)-\frac{1}{e}\Big|
\le
Cp_n
,
\end{equation}
so that in particular, $V_n(p_n)\to 1/e$ in the sparse regime.
The following result entails in particular that the plug-in rule is asymptotically optimal in this regime.

\begin{thm}
\label{thm:plugin-eff-sparse}
Let~$(p_n)$ be a sequence in~$(0,1)$ such that~$p_n\to 0$ and~$np_n\to\infty$. Then, there exist positive constants~$C_1,C_2$ such that
$$
V_n(p_n)-W_n(p_n)
\le
C_1 \sqrt{\frac{\log(n p_n)}{n p_n}}
+
C_2 p_n
$$
for all~$n$ large enough.
In particular, $W_n(p_n)\to 1/e$. 
\end{thm}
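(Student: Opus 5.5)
Write $m=m_n=\lceil 1/p_n\rceil-1$ for the oracle window length and $N:=np_n\to\infty$. The plan is to sandwich the plug-in stopping rule between two threshold rules with deterministic windows of lengths $\ell_-<m<\ell_+$. We then show that, with high probability, the plug-in rule behaves exactly like ``stop at the first success in a window whose length lies in $[\ell_-,\ell_+]$''. Finally, we exploit the flatness of $g$ near its maximiser. Concretely, fix $\varepsilon_n:=A\sqrt{\log N/N}$ with $A$ large. Since $g(\ell)=\ell p(1-p)^{\ell-1}$ with $p=p_n$ and $\ell=xm$ behaves like $xe^{-x}$ up to $O(p_n)$ errors (as in the proof of~(\ref{eq:Vn-rate})), we have $g(m)-g(\ell)\lesssim (\ell/m-1)^2+p_n$. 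A window perturbation of relative size $\varepsilon_n$ would therefore only cost $O(\varepsilon_n^2)$. The bound $\sqrt{\log N/N}$ in the statement is weaker than this, which leaves room for cruder bookkeeping: the stopping window is data-dependent, and a random, adapted window is not equivalent to a deterministic one.

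\textbf{Step 1 (concentration of $\hat p_t$ on the relevant range).} By~(\ref{noearlystop}) the rule only stops at $t\ge n/2$. On $t\in[n/2,n]$ we need uniform control $|\hat p_t-p_n|\le \varepsilon_n p_n$. Hoeffding is useless here because the variance is $p_n(1-p_n)$. I would instead apply a Freedman/Bernstein maximal inequality to the martingale $S_t-tp_n$ on $[n/2,n]$, which gives
\[
\mathbb P\Bigl[\max_{n/2\le t\le n}|S_t-tp_n|>\tfrac{n}{2}\varepsilon_np_n\Bigr]\le 2\exp(-cN\varepsilon_n^2)\le N^{-1/2}
\]
for $A$ large. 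Call the complement event $G$.

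\textbf{Step 2 (on $G$ the rule is a perturbed threshold rule).} On $G$, the condition $\hat p_t<1/(n-t+1)$ can hold only if $n-t+1<1/(p_n(1-\varepsilon_n))$, and it certainly holds once $n-t+1<1/(p_n(1+\varepsilon_n))$. Thus on $G$, every success in the terminal block of length $\ell_-:=\lfloor 1/(p_n(1+\varepsilon_n))\rfloor$ triggers stopping, and no success before the block of length $\ell_+:=\lceil 1/(p_n(1-\varepsilon_n))\rceil$ does. The rule therefore wins whenever there is exactly one success in the last $\ell_+$ trials and it falls in the last $\ell_-$ trials. This gives
\[
W_n(p_n)\ \ge\ \ell_-p_n(1-p_n)^{\ell_+-1}-\mathbb P[G^c].
\]
The event ``exactly one success in the last $\ell_+$ trials, and it falls in the last $\ell_-$'' has probability exactly $\ell_-p_n(1-p_n)^{\ell_+-1}$; intersecting it with $G$ costs at most $\mathbb P[G^c]$. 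Here $\ell_\pm\le n/2$ for large $n$, since $N\to\infty$.

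\textbf{Step 3 (comparison with $V_n$).} We have $\ell_\pm p_n=1\mp\varepsilon_n+O(\varepsilon_n^2+p_n)$. Hence
\[
\ell_-p_n(1-p_n)^{\ell_+-1}=(1-\varepsilon_n)e^{-(1+\varepsilon_n)}+O(\varepsilon_n^2+p_n)=e^{-1}-O(\varepsilon_n)-O(p_n).
\]
This first-order loss is linear in $\varepsilon_n$, which is exactly the announced $C_1\sqrt{\log N/N}$. Combining with~(\ref{eq:Vn-rate}) and $\mathbb P[G^c]\le N^{-1/2}\le\varepsilon_n$ yields the claim. The statement $W_n(p_n)\to1/e$ then follows, since $W_n\le V_n$ and $V_n(p_n)\to1/e$ by~(\ref{eq:Vn-rate}).

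\textbf{Main obstacle.} The delicate point is Step 1: obtaining a \emph{uniform-in-$t$}, variance-sensitive deviation bound with the correct relative scale $\varepsilon_np_n$. Hoeffding would only give deviations of order $1/\sqrt n$, which is much larger than $p_n$ when $p_n\ll n^{-1/2}$. I would use Freedman's inequality, whose predictable quadratic variation is $\le np_n$, applied to the stopped martingale on $[n/2,n]$. One could alternatively apply Bernstein's inequality pointwise at each $t$ with a union bound over $n$ times, but that introduces $\log n$ rather than $\log N$. Freedman's maximal form avoids the union bound, or one can take a union bound over a geometric grid only.
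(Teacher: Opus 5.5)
Your proposal is correct and follows essentially the paper's route. Both arguments use Freedman's maximal inequality to get uniform relative concentration of $\hat p_t$ over the second half of the horizon (the paper's Lemma~\ref{lem:uniform-lln-horizon}), sandwich the plug-in stopping window between deterministic windows of lengths about $1/((1\pm\varepsilon_n)p_n)$ with $\varepsilon_n\asymp\sqrt{\log(np_n)/(np_n)}$, and compare with $1/e$ through~(\ref{eq:Vn-rate}).

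The only difference is bookkeeping. The paper conditions on the concentration event intersected with the event that the rule has not stopped before the inner window. It controls the complement of that event by a Markov bound on the number of successes in the ambiguous zone, and proves a two-sided bound on $|W_n(p_n)-1/e|$. Your single explicit winning event gives the needed one-sided bound $W_n(p_n)\ge \ell_-p_n(1-p_n)^{\ell_+-1}-\mathbb P[G^c]$ directly. It also cleanly avoids a subtlety: the concentration event depends on the terminal-block observations, so the paper's conditional win probability, which it writes as exactly binomial, is in fact only approximately so.
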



While a Hoeffding-based uniform control of $\hat p_t-p_t$ is sufficient in the non-sparse regime considered in Section~\ref{sec:oracle_bounds}, it becomes too crude when $p_n\to0$: in the sparse setting, the relevant fluctuations are governed by the (small) variance scale $t p_n$, and we therefore rely on a variance-sensitive martingale deviation inequality, namely Freedman's inequality (see, \emph{e.g.}, \citealp{Freedman1975TailMartingales}, \citealp{Tropp2011FreedmanMatrix}, or \citealp{Howard2021TimeUniformChernoff}). More precisely, the proofs of this section rest on a uniform law of large numbers for~$\hat p_t/p_n$ over time windows~$\{t\ge t_n\}$ with~$n/t_n=O(1)$, stated and proved as Lemma~\ref{lem:uniform-lln-horizon} in Appendix~\ref{secProofsSection6}.


\subsection{A maximal uniform convergence result}
\label{sec:maximal}

Theorems~\ref{thm:finite_oracle_ineq_p0}--\ref{thm:plugin-eff-sparse} allow us to establish the following uniform convergence result.

\begin{thm}
\label{thm:uniform-etan}
Let~$(p_n)$ be a sequence in~$(0,1)$ such that~$p_n\to 0$ and~$n p_n\to\infty$.
Let~$(\tilde{p}_n)$ be a sequence in~$(0,1)$ such that~$n \tilde{p}_n\to 0$.  Then,
$$
\lim_{n\to\infty}
\sup_{p\in(0,\tilde{p}_n]\cup[p_n,1)} 
\big(V_n(p)-W_n(p)\big) 
= 0.
$$
\end{thm}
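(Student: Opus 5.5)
The plan is to split the parameter set $(0,\tilde p_n]\cup[p_n,1)$ into a trivially small region, a non-sparse region, and a sparse region. We handle the first directly, the second with Theorem~\ref{thm:finite_oracle_ineq_p0}, and the third with Theorem~\ref{thm:plugin-eff-sparse} through a subsequence/contradiction argument. Throughout, we use that $0\le V_n(p)-W_n(p)\le V_n(p)$, since the oracle is optimal and $W_n\ge0$.

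\emph{The region $(0,\tilde p_n]$.} Since $n\tilde p_n\to0$, we have $\tilde p_n<1/n$ for $n$ large. For $p\le\tilde p_n$ this gives $m(p)=\lceil1/p\rceil-1\ge n$, so $s_n(p)=1$ in~(\ref{eq:sn_homo}). Then~(\ref{WinOracle}) gives
\[
V_n(p)=np(1-p)^{n-1}\le np\le n\tilde p_n .
\]
Hence $\sup_{p\in(0,\tilde p_n]}(V_n(p)-W_n(p))\le n\tilde p_n\to0$. No property of the plug-in rule is needed here.

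\emph{The region $[p_n,1)$.} Setting $\eta_n:=\sup_{p\in[p_n,1)}(V_n(p)-W_n(p))$, we argue by contradiction. Suppose $\eta_n\not\to0$. Then there exist $\varepsilon>0$, an infinite set $N\subset\mathbb N$, and points $q_n\in[p_n,1)$, $n\in N$, such that $V_n(q_n)-W_n(q_n)\ge\varepsilon$ for all $n\in N$. Passing to a further infinite subset, two cases arise.

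\begin{itemize}
\item[(1)] $\liminf_{n\in N}q_n>0$. Then $q_n\ge\delta$ for some fixed $\delta\in(0,\frac12]$ and all large $n\in N$. Theorem~\ref{thm:finite_oracle_ineq_p0}(ii) with $p_0=\delta$, via~(\ref{eq:p0_uniform_two_term}), bounds the deficit by $C(\delta)/\sqrt n\to0$, a contradiction.
\item[(2)] $q_n\to0$ along $N$. Define a full sequence by $r_n:=q_n$ for $n\in N$ and $r_n:=p_n$ otherwise. Then $r_n\to0$, and $nr_n\ge np_n\to\infty$. Theorem~\ref{thm:plugin-eff-sparse} applies to $(r_n)$ and gives
\[
V_n(r_n)-W_n(r_n)\le C_1\sqrt{\log(nr_n)/(nr_n)}+C_2r_n\to0,
\]
again contradicting the lower bound $\varepsilon$ along $N$.
\end{itemize}

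Combining the two regions yields the claim.

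The only delicate point, and the reason for the contradiction argument, is uniformity over the intermediate range $[p_n,\delta]$. Theorem~\ref{thm:plugin-eff-sparse} is stated for a single sequence, with constants depending on that sequence, so it cannot be applied to a supremum directly. Extracting a near-maximizing sequence $(q_n)$ converts the supremum into a single sequence, and the only properties needed for that sequence are $q_n\to0$ and $nq_n\to\infty$. The condition $nq_n\to\infty$ is inherited automatically from $q_n\ge p_n$, which is exactly why the lower region $[p_n,1)$ must satisfy $np_n\to\infty$. One should also check that Theorem~\ref{thm:plugin-eff-sparse} places no further requirement on the sequence beyond $p_n\to0$ and $np_n\to\infty$; this is how it is stated.
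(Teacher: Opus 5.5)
Your proof is correct and follows the same route as the paper. The region $(0,\tilde p_n]$ is handled by a bound of order $n\tilde p_n$. The region $[p_n,1)$ is handled by contradiction: you extract near-maximizers and split according to whether they stay bounded away from $0$ (Theorem~\ref{thm:finite_oracle_ineq_p0}) or tend to $0$ (Theorem~\ref{thm:plugin-eff-sparse}). Two details are slightly cleaner than the paper's version. First, you bound $V_n$ directly through $s_n(p)=1$, whereas the paper applies Markov's inequality to both $V_n$ and $W_n$. Second, you pad the subsequence into a full sequence with $r_n:=p_n$ off $N$, which invokes Theorem~\ref{thm:plugin-eff-sparse} exactly as stated, for full sequences; the paper instead applies it directly along $n=n_k$.
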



The uniform convergence result in Theorem~\ref{thm:uniform-etan} is maximal, since convergence does not hold in the regime~$p\asymp 1/n$. 
To show this, let~$p_n:=c/n$ with~$c\in(0,1)$. Consider the event
$$
G_n
:=
\Big\{
\textstyle\sum_{i=1}^n X_i=1
\ \text{and the unique success occurs in } \{1,\dots,\lceil \tfrac{n}{2} \rceil\}
\Big\}
.
$$
Since~$p_n<1/n$, the $p_n$-oracle threshold in~(\ref{eq:sn_homo}) is $s_n(p_n)=1$, so that this oracle rule stops at the first success (if any). In particular, this rule wins if and only if~$S_n=1$. In contrast, the plug-in rule loses on~$G_n$ because it never stops earlier than~$\lceil \frac{n}{2} \rceil+1$ (see~(\ref{noearlystop})), so that
$$
W_n(p_n)
\le
\mathbb P_{p_n}[S_n=1, G_n^{c}]
+
\mathbb P_{p_n}[S_n\ge 2]
.
$$
Consequently,
\begin{eqnarray*}
V_n(p_n)-W_n(p_n)
\!\!&\!\!\ge \!\! &\!\!
\mathbb P_{p_n}[S_n=1]
-
\mathbb P_{p_n}[S_n=1, G_n^{c}]
-
\mathbb P_{p_n}[S_n\ge 2]
\\[2mm]
\!\!&\!\!= \!\! &\!\!
\mathbb P_{p_n}[G_n]
-
\mathbb P_{p_n}[S_n\ge 2]
.
\end{eqnarray*}
Since
$$
\mathbb P_{p_n}[G_n]
=
\lceil \tfrac{n}{2}\rceil p_n(1-p_n)^{n-1}
\to 
\frac{1}{2}ce^{-c}
$$
and
$$
\mathbb P_{p_n}[S_n\ge 2]
=
1-(1-p_n)^n-np_n(1-p_n)^{n-1}
\to 
1-(1+c)e^{-c}
$$
this yields
$$
\liminf_{n\to\infty}
\big(V_n(p_n)-W_n(p_n)\big)
 \ge
\Bigl(1+\frac{3c}{2}\Bigr)e^{-c}-1.
$$
If~$c$ is sufficiently small to make the right-hand side positive, we then have
$$
\liminf_{n\to\infty} 
\sup_{p\in(0,1)}
\big(V_n(p)-W_n(p)\big) 
\ge
\liminf_{n\to\infty}
\big(V_n(p_n)-W_n(p_n)\big)
> 0
,
$$
which proves that the uniform convergence in Theorem~\ref{thm:uniform-etan} cannot be extended to~$(0,1)$.  This is clearly supported by the plot of the deficit~$V_n(p)-W_n(p)$ in Figure~\ref{Fig3}.

\begin{figure}[h!]
\includegraphics[width=.58\textwidth]{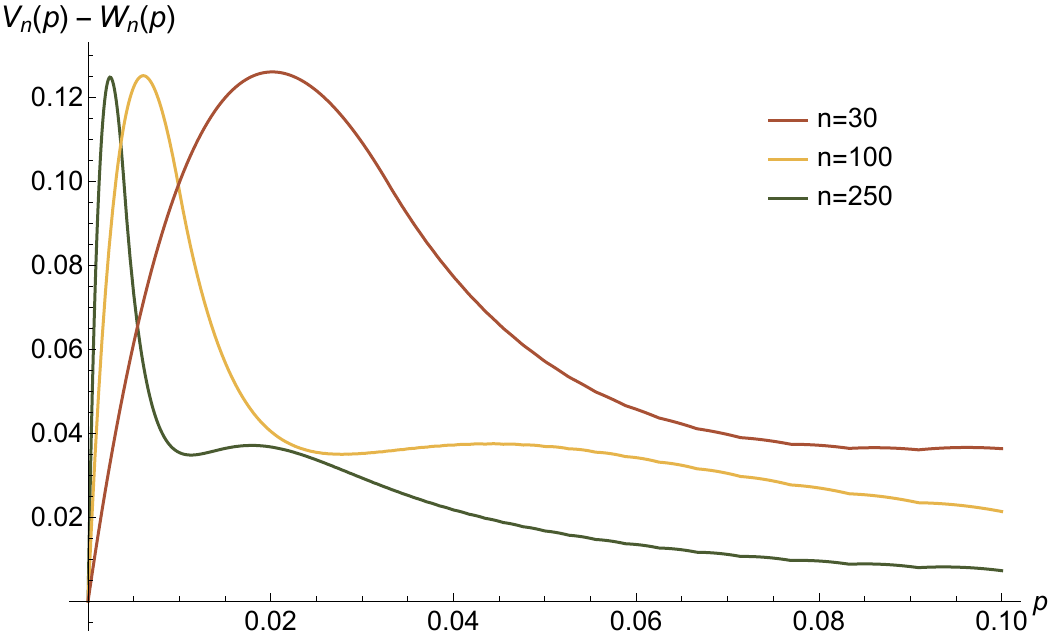}
\caption{
The deficit $V_n(p)-W_n(p)$ for $n\in\{30,100,250\}$.
}
\label{Fig3}
\end{figure}




However, it should not be seen as a negative property of the plug-in rule that convergence does not hold uniformly in~$p\in(0,1)$. As the following result shows, no rule can achieve this.

\begin{thm}
\label{thm:no_uniform_abs}
There does not exist a sequence of (possibly randomized)  rules $(\pi_n)$ such that
\begin{equation}
\label{eq:assumption_gap}
\lim_{n\to\infty}
\sup_{p\in(0,1)}
\big(V_n(p)-W_n^{\pi_n}(p)\big)
= 0
,
\end{equation}
where $W_n^{\pi_n}(p)$ is the win probability of $\pi_n$
under success probability~$p$.
\end{thm}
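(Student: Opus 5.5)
The plan is a two-point argument in the critical window $p\asymp 1/n$. I would pair a very sparse parameter $p_{1,n}:=c/n$, with $c\in(0,1)$ small, with a moderately sparse one $p_{2,n}:=b/n$, with $b>1$ fixed (say $b=4$). Both parameters force every rule to make the same decision: whether to stop at the first success when it occurs early. The key structural observation is this. On the event $E_t:=\{X_1=\cdots=X_{t-1}=0,\ X_t=1\}$ with $t\le\lceil n/2\rceil$, the observed history is a deterministic function of $t$. Hence the (possibly randomized) decision of $\pi_n$ to stop at $t$ is described by a single number $\phi_n(t)\in[0,1]$, which is the same under every $p$, because the internal randomization is independent of the data. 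Everything reduces to the fact that $\sum_{t\le n/2}\bigl[\phi_n(t)+(1-\phi_n(t))\bigr]=\lceil n/2\rceil$: either $\phi_n$ is large on a positive fraction of times, or $1-\phi_n$ is.

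\emph{Step 1: deficit under $p_{1,n}$.} Since $p_{1,n}<1/n$, we have $s_n(p_{1,n})=1$ and $V_n(p_{1,n})=\mathbb P[S_n=1]$. Split the winning event of $\pi_n$ according to whether $S_n=1$ or $S_n\ge2$. On $E_t\cap\{S_n=1\}$, a rule that does not stop at $t$ cannot win, since no success follows. Hence
\[
W_n^{\pi_n}(p_{1,n})\le \mathbb P[S_n=1]-\sum_{t\le n/2}(1-\phi_n(t))\,\mathbb P[E_t,\ S_n=1]+\mathbb P[S_n\ge2].
\]
Using $\mathbb P[E_t,S_n=1]=p_{1,n}(1-p_{1,n})^{n-1}\ge (c/n)e^{-2c}$ for large $n$, and $\mathbb P[S_n\ge2]\le c^2$, this gives
\[
V_n-W_n^{\pi_n}\ \ge\ \frac{c\,e^{-2c}}{n}\sum_{t\le n/2}(1-\phi_n(t))-c^2 .
\]

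\emph{Step 2: deficit under $p_{2,n}$.} Here $V_n(p_{2,n})\to1/e$ by (\ref{eq:Vn-rate}). Decompose according to the time $T$ of the first success.
\begin{itemize}
\item[(a)] On $E_t$ with $t\le n/2$, if the rule stops, it wins only if $X_{t+1}=\cdots=X_n=0$. This has probability at most $(1-b/n)^{n/2}\le e^{-b/2}$.
\item[(b)] On $E_t$ without stopping, and on $\{T>n/2\}$, I would bound the conditional win probability, given the past and the internal randomization, by the optimal value of the remaining known-$p$ last-success problem. By the sum-the-odds theorem this is at most $\max_\ell g(\ell)=V_n(p_{2,n})$, since the future trials are i.i.d.\ and independent of the past.
\end{itemize}
This yields
\[
V_n-W_n^{\pi_n}\ \ge\ \sum_{t\le n/2}\phi_n(t)\,\mathbb P[E_t]\bigl(V_n(p_{2,n})-e^{-b/2}\bigr),
\]
with $\mathbb P[E_t]\ge (b/n)e^{-b}$ for large $n$. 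For $b=4$ we have $e^{-2}<1/e$, so the bracket is eventually at least a constant $\kappa>0$.

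\emph{Step 3: conclusion.} Write $A_n:=\frac1n\sum_{t\le n/2}\phi_n(t)\in[0,\tfrac12+o(1)]$. Then the supremum over $p$ of the deficit is at least
\[
\max\bigl\{\kappa b e^{-b}A_n,\ c e^{-2c}(\tfrac12-A_n)-c^2\bigr\}-o(1).
\]
Fix $b=4$ and then $c$ small enough that $ce^{-2c}/2>2c^2$. The maximum of the two affine functions over $A\in[0,\tfrac12]$ is then bounded below by a positive constant independent of $n$. This contradicts (\ref{eq:assumption_gap}).

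The step I expect to require the most care is Step 2(b): justifying rigorously that, after a non-stop (or before any success), the conditional win probability of an arbitrary randomized rule is at most $V_n(p)$. I would handle this by conditioning on $\mathcal F_t$ together with the randomization, and invoking optimality of the oracle rule for the i.i.d.\ future block of length $n-t$. Its value $\max_{\ell\le n-t}g(\ell)$ is at most $V_n(p)$, since $V_n(p)=\max_{\ell\le n}g(\ell)$ is the optimal value over all rules on $n$ trials.
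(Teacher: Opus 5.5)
Your proof is correct, and its skeleton matches the paper's. You use the same two sparse parameters ($c/n$ with $c$ small, and $d/n$ with $d>1$) and the same $p$-free quantity: the paper's $\beta_{n,t}=\mathbb P[\tau_n=t\mid B_{n,t}]$ is your $\phi_n(t)$. Your Step~1 is essentially the paper's bound at $c/n$.

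The routes differ at the second parameter.

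\emph{The paper's route.} It sums over all $t\in\{1,\ldots,n\}$ and uses only one observation: winning with $S_n\ge2$ forces the rule to skip the first success. This gives
\[
W_n^{\pi_n}(d/n)\le\mathbb P_{d/n}[S_n=1]+\tfrac dn\gamma_n,
\qquad
\gamma_n=\sum_t(1-\beta_{n,t}).
\]
The argument is run by contradiction: the bound at $c/n$ first forces $\gamma_n/n\to0$, and then $\limsup_n W_n^{\pi_n}(d/n)\le de^{-d}<e^{-1}$ for any $d>1$. Beyond the explicit formula giving $V_n(d/n)\to e^{-1}$, no bound on an arbitrary rule's continuation value is needed.

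\emph{Your route.} You restrict to early first successes ($t\le n/2$) and bound the conditional win probability after a non-stop by $V_{n-t}(p)\le V_n(p)$, via oracle optimality on the independent future block. Your justification for this, by conditioning on $\sigma(X_1,\ldots,X_t,U)$, is sound. The cost is a slightly heavier tool and the requirement $e^{-b/2}<e^{-1}$, i.e.\ $b>2$ rather than the $b>1$ you first announce. This is harmless, since you take $b=4$.

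\emph{What your route buys.} Your max-of-two-affine-functions step is direct and quantitative. It gives an explicit constant $\kappa'>0$ such that $\sup_{p\in(0,1)}\bigl(V_n(p)-W_n^\pi(p)\bigr)\ge\kappa'$ for every $p$-blind rule $\pi$ and all large $n$. This is a uniform minimax lower bound, not only the nonexistence of a sequence of rules achieving the displayed limit.
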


Theorem~\ref{thm:no_uniform_abs} is the substantive form of the impossibility. That uniform optimality should fail at a \emph{fixed} horizon is hardly surprising---Theorem~\ref{thm:no_uniform_opt} confirms it, but with finitely many observations one would not expect a single rule to be optimal at every~$p$ simultaneously. The meaningful question is whether uniformity can be recovered in the limit, as the horizon grows and~$p$ becomes ever easier to estimate. Theorem~\ref{thm:no_uniform_abs} answers that it cannot, and it does so for \emph{all} rules, not merely the \mbox{$p$-blind} ones: the obstruction is therefore not a price paid for oracle-freeness, but a property of the last-success problem itself.

Together with Theorem~\ref{thm:uniform-etan}, this settles the sparse direction, and it is worth separating the three regimes that arise. If~$np_n\to\infty$, successes accumulate fast enough for~$\hat p_t$ to identify the correct cell, and the plug-in rule matches the oracle asymptotically. If~$np_n\to0$, it is the \emph{oracle} that degenerates: $s_n(p_n)=1$ for~$n$ large and~$V_n(p_n)=np_n(1-p_n)^{n-1}\to0$, hence~$W_n(p_n)\to0$ as well; the deficit does vanish there, but only because both terms do. The window~$p\asymp1/n$ is the one place where neither escape is available: for~$p_n=c/n$ one has~$V_n(p_n)\to ce^{-c}>0$, so that the oracle value is non-degenerate, while the number of successes stays~$O(1)$, so that no estimator of~$p$ can be consistent. This is why the range in Theorem~\ref{thm:uniform-etan} is~$(0,\tilde p_n]\cup[p_n,1)$ rather than a half-line: the excluded window is not an artifact of the proof, and by Theorem~\ref{thm:no_uniform_abs} no rule can remove it.


\section{Wrap up and perspectives for future research}
\label{sec:conclu}

We investigated optimal stopping for the homogeneous last-success problem with unknown success probability~$p$, from the point of view of statistical decision theory. Beyond the main results described below, we identified regimes in which oracle-freeness is achievable: the plug-in rule matches the oracle win probability in absolute error
uniformly over $p\in[p_0,1)$ for any~$p_0>0$, achieves the optimal $1/e$ limit in sparse regimes with $p=p_n\to0$ and
$np_n\to\infty$, and attains a maximal uniform convergence statement that cannot be extended through the hardest neighborhood $p\asymp 1/n$.

Our main results concern the non-sparse regime, and they are of two kinds, operating at two different resolutions. At the level of rates, Theorems~\ref{thm:finite_oracle_ineq_p0}--\ref{thmLowerBound} provide a finite-horizon oracle inequality, valid for every~$n\ge2$ and every~$p_0\in(0,1)$ and pointwise in~$p$, together with a matching minimax lower bound; these are the statements that apply at a fixed horizon, that quantify the exponential decay of the deficit at each individual~$p$, and that serve as inputs to the sparse-regime analysis. At the level of constants, Theorems~\ref{thm:plugin_sharp}--\ref{thm:minimax_sharp} show that, for~$p_0<\frac12$, the worst-case deficit of the plug-in rule and the minimax risk over all (possibly randomized) \mbox{$p$-blind} rules both equal~$C_\star/\sqrt n$ asymptotically, with~$C_\star=\frac12\sup_{u>0}u\Phi(-u)$; the plug-in rule is thus asymptotically minimax optimal, with the exact constant. Proposition~\ref{prop:local_at_1k} shows that this is not a phenomenon attached to the single parameter value~$\frac12$: every discontinuity~$\frac1k$ of the oracle carries its own exact local minimax constant~$2\gamma_kC_\star$, again attained by the plug-in rule, and~$C_\star$ is merely the largest of them. What the worst case reports as one number is therefore the maximum of a whole family of local difficulties indexed by the oracle discontinuities, $\gamma_k$ measuring the cost of misjudging the~$k$th of them; the exact constant is the output of a complete local asymptotic minimax theory rather than an isolated computation at~$p=\frac12$. Finally, we showed that the natural sample-splitting alternative, which freezes the estimate after a fraction~$a$ of the horizon, remains rate-optimal but pays exactly~$C_\star/\sqrt{an}$, so that sequential updating is strictly preferable and the value of the discarded information is quantified by the factor~$1/\sqrt a$.

Several directions for future research appear natural. On the decision-theoretic front, the nonexistence of a greatest element at fixed horizon~$n$ motivates studying alternative principles for selecting \mbox{$p$-blind} rules, such as minimax regret or Bayes optimality, and characterizing rules that are optimal under these criteria. On the modeling front, it would be of interest to move beyond the homogeneous setting,
for instance to piecewise-constant or slowly varying success probabilities, where one may hope to retain a tractable
threshold structure while allowing for nonstationarity. Finally, on the formulation front, one could tackle the case where the horizon~$n$ is not fixed but is itself random, in the spirit of \citet{HillKrengel1991Minimax}. The combined uncertainty about the horizon and the success probability would make the resulting stopping problem substantially more complex, but also of even higher practical relevance.

\vspace{2mm}
\begin{appendix}
\section{Exact finite-horizon analysis of the plug-in rule}
\label{secExactFiniteHorizon}

This section provides an exact, computable expression for the win probability
$W_n(p)$ of the plug-in rule~\eqref{StoppingTimePlugin}. It makes the oracle
deficit $V_n(p)-W_n(p)$ explicit at any finite horizon, and it is what underlies
the numerical illustrations of the paper: Figures~\ref{Fig1}--\ref{Fig3}
and, jointly with the closed-form expression of Appendix~\ref{secProofsSampleSplit},
Table~\ref{tab:constants}.

We now derive the win probability of the plug-in rule.
First note that, 
for~$t\leq n-1$, 
the condition~$\hat{p}_t<1/(n-t+1)$ in~(\ref{StoppingTimePlugin}) is equivalent to
\begin{equation}
\label{bt}
S_t\le b_t
:=
\Bigl\lceil\frac{t}{\,n-t+1\,}\Bigr\rceil -1
.
\end{equation}
We incorporate the terminal clause~$t=n$ by setting~$b_n:=n$.
Based on the state probabilities
$$
u_{t,k}
:=
\mathbb P_p[
\hat{\tau}_n>t \text{ and } S_t=k]
,
\qquad
t=0,1,\dots,n,
\quad
 k=0,1,\dots,t
 ,
$$
the probability that the plug-in rule stops at time~$t$ is 
\begin{eqnarray*}
\lefteqn{
\ell_t(p)
:=
\mathbb P_p[\hat{\tau}_n=t]
=
\mathbb P_p[\hat{\tau}_n>t-1,X_t=1, \textrm{ and } S_t\leq b_t]
}
\\[1mm]
& & 
\hspace{15mm}
=
p\, \mathbb P_p[\hat{\tau}_n>t-1 \textrm{ and } S_{t-1}\leq b_t-1]
=
p
\sum_{k=1}^{b_t} 
u_{t-1,k-1}
.
\end{eqnarray*}
In view of~(\ref{noearlystop}), the win probability of this rule is then
$$
W_n(p)
=
\sum_{t=\lceil\frac{n}{2}\rceil+1}^n (1-p)^{n-t} \mathbb P_p[\hat{\tau}_n=t]
=
p
\sum_{t=\lceil\frac{n}{2}\rceil+1}^n (1-p)^{n-t}\sum_{k=1}^{b_t} u_{t-1,k-1}
,
$$
since, on~$\{\hat{\tau}_n=t\}$, the rule wins if and only if~$X_{t+1}=\cdots=X_n=0$, which occurs with probability $(1-p)^{n-t}$ and is independent of $\{\hat{\tau}_n=t\}$.

Now, for $t=1,\dots,n$ and~$k>0$, conditioning on $X_t$ yields\footnote{Throughout, $\mathbb I[A]$ will stand for the indicator of the condition (or set)~$A$.}
\begin{eqnarray*}
\hspace{-12mm}
u_{t,k}
\!\!&\!\! = \!\! &\!\!
p
\,
\mathbb P_p[\hat{\tau}_n>t \text{ and } S_t=k | X_t=1]
+
(1-p)
\mathbb P_p[\hat{\tau}_n>t \text{ and } S_t=k | X_t=0]
\\[2mm]
\!\!&\!\! = \!\! &\!\!
p
\,
\mathbb P_p[\hat{\tau}_n>t-1 \text{ and } S_{t-1}=k-1 | X_t=1]
\mathbb{I}[k>b_t]
\\[2mm]
& & 
\hspace{20mm}
+
(1-p)
\mathbb P_p[\hat{\tau}_n>t-1 \text{ and } S_{t-1}=k | X_t=0]
\\[1mm]
\!\!&\!\! = \!\! &\!\!
pu_{t-1,k-1}
\mathbb{I}[k>b_t]
+
(1-p)u_{t-1,k}
,
\end{eqnarray*}
whereas, for $t=1,\dots,n$ and~$k=0$, the fact that $\{S_t=0\}\subseteq \{\hat{\tau}_n>t\}$ provides
$$
u_{t,0}
=
\mathbb P_p[
\hat{\tau}_n>t \text{ and } S_t=0]
=
\mathbb P_p[S_t=0]
=
(1-p)u_{t-1,0}
.
$$
The state probabilities~$u_{t,k}$ can thus be obtained via the recursion
\begin{equation}
\begin{split}
& 
u_{t,k}
=
pu_{t-1,k-1}
\mathbb{I}[k>b_t]
+
(1-p)u_{t-1,k}
,
\qquad
t=1,\ldots,n, \quad k>0,
\\
& 
u_{t,0}=(1-p)u_{t-1,0}
,
\qquad
t=1,\ldots,n,
\end{split}
\label{eq:u_recursion}
\end{equation}
initialized at $u_{0,k}=\mathbb{I}[k=0]$ (note that $u_{t,0}=\mathbb P_p[S_t=0]=(1-p)^t$, but we keep the one-step update above to maintain a uniform dynamic-programming recursion over~$k=0,\dots,t$).

We have proved the following result.


\begin{thm}
\label{thm:exact_Wn_plugin}
For any~$n\ge 2$ and $p\in(0,1)$, the win probability of the plug-in rule is 
\begin{equation}
\label{eq:Wn_exact}
W_n(p)
=
p
\sum_{t=\lceil\frac{n}{2}\rceil+1}^n (1-p)^{n-t}
\sum_{k=1}^{b_t} u_{t-1,k-1}
,
\end{equation}
where the quantities~$u_{t,k}$ can be computed via the recursion~\eqref{eq:u_recursion}
(the corresponding stopping probabilities are then
$
\mathbb P_p[\hat{\tau}_n=t]
=
p
\sum_{k=1}^{b_t} u_{t-1,k-1}$,
for $
t=1,\ldots,n
$).
\end{thm}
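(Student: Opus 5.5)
The statement just above is Theorem~\ref{thm:exact_Wn_plugin}, and the derivation preceding it essentially constitutes its proof. My plan is to organize that derivation into three verifications.

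\emph{Step 1: reformulate the stopping condition.} For $t\le n-1$, I will show that $\hat p_t<1/(n-t+1)$ is equivalent to $S_t\le b_t$. The event is $S_t<t/(n-t+1)$. For an integer $S_t$ and a real $x>0$, the inequality $S_t<x$ holds if and only if $S_t\le\lceil x\rceil-1$, and this covers both the integer and the non-integer case. Setting $b_n:=n$ encodes the terminal clause, since $S_n\le n$ always holds.

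\emph{Step 2: decompose the win event by stopping time.} I will write $W_n(p)=\sum_t\mathbb P_p[\hat\tau_n=t,\ X_{t+1}=\cdots=X_n=0]$. The event $\{\hat\tau_n=t\}$ is $\sigma(X_1,\dots,X_t)$-measurable, so by independence each term factorizes as $(1-p)^{n-t}\,\mathbb P_p[\hat\tau_n=t]$. By~(\ref{noearlystop}), the terms with $t\le\lceil n/2\rceil$ vanish. Next, $\{\hat\tau_n=t\}=\{\hat\tau_n>t-1,\ X_t=1,\ S_{t-1}\le b_t-1\}$. Since $X_t$ is independent of $\mathcal F_{t-1}$, this gives $p\sum_{k=1}^{b_t}u_{t-1,k-1}$.

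\emph{Step 3: verify the recursion~\eqref{eq:u_recursion}.} Fix $k>0$ and split $\{\hat\tau_n>t,\ S_t=k\}$ according to the value of $X_t$.
\begin{itemize}
\item If $X_t=0$, the rule cannot stop at $t$, so this piece equals $\{\hat\tau_n>t-1,\ S_{t-1}=k\}\cap\{X_t=0\}$.
\item If $X_t=1$, the rule does not stop at $t$ exactly when $S_t=k>b_t$, so this piece equals $\{\hat\tau_n>t-1,\ S_{t-1}=k-1\}\cap\{X_t=1\}$ on $\{k>b_t\}$.
\end{itemize}
Independence of $X_t$ from $\mathcal F_{t-1}$ then yields the recursion. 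The case $k=0$ is immediate, because no stop is possible without a success. The initialization $u_{0,k}=\mathbb I[k=0]$ is trivial.

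The only delicate point is the ceiling equivalence in Step~1 when $t/(n-t+1)$ is an integer. There the strict inequality must give $\lceil x\rceil-1=x-1$, which is indeed correct.
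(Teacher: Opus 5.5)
Your proposal is correct and follows essentially the same route as the paper. The paper derives the result exactly this way: it rewrites $\hat p_t<1/(n-t+1)$ as $S_t\le b_t$, factorizes the win probability over $\{\hat\tau_n=t\}$ with the factor $(1-p)^{n-t}$ and uses the no-early-stop property to restrict the sum, expresses $\mathbb P_p[\hat\tau_n=t]$ as $p\sum_{k=1}^{b_t}u_{t-1,k-1}$, and obtains the recursion by conditioning on $X_t$.
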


The win probability~$W_n(p)$ in~(\ref{eq:Wn_exact}) is exact for any~$n$ and
$p\in(0,1)$ and can be evaluated in~$O(n^2)$ arithmetic operations; in particular,
Theorem~\ref{thm:exact_Wn_plugin} shows that~$p\mapsto W_n(p)$ is a polynomial.
It is what makes Figure~\ref{Fig1} computable, and it explains
two features visible there. The kinks at
$p=1/k$ in the right panel come from the non-differentiability of $V_n(p)$ at the
threshold transition points, whereas $W_n(p)$ is smooth, being a polynomial by
Theorem~\ref{thm:exact_Wn_plugin}. The two functions also differ in monotonicity:
$V_n(p)$ is strictly increasing in $p$ for every $n$, while $W_n(p)$ may fail to
be nondecreasing---though the deviations from monotonicity are
minute.\footnote{The smallest $n$ for which monotonicity fails is $n=60$; see
Appendix~\ref{secMonotonocity}.}

As Figure~\ref{Fig1} suggests, not knowing $p$ typically
entails a positive cost, $V_n(p)-W_n(p)>0$. In fact this holds for \emph{every} $p\in(0,1)$ as soon
as $n\ge6$: the plug-in rule never coincides with the oracle, a strict
separation that we establish, together with a description of the small horizons
$n\in\{2,3,4,5\}$ where equality can occur, in
Appendix~\ref{secFiniteSampleComparison}. The unknown-$p$ formulation moreover admits no
uniformly best rule: for every fixed $n\ge2$, the natural dominance partial
order on $p$-blind rules (where $\pi$ dominates $\pi'$ if
$W_n^\pi(p)\ge W_n^{\pi'}(p)$ for all $p$) has no greatest element, even if one
allows randomization. These finite-horizon obstructions---although not
surprising, and not needed for the quantitative theory below---clarify why one
must adopt an asymptotic and quantitative criterion rather than seek
finite-horizon uniform optimality---the route followed in the main text, where
the analysis proceeds through the minimax results of Sections~2--3. Both
obstructions are proved in Appendix~\ref{secFiniteSampleBarrier}.



\section{Auxiliary results}
\label{secAuxiliaryResults}

This appendix collects the finite-horizon obstructions announced in
Appendix~\ref{secExactFiniteHorizon}. They are not needed for the quantitative
theory of Sections~\ref{sec:oracle_bounds}--\ref{sec:sparse}, but they clarify
the structure of the unknown-$p$ formulation: the plug-in rule never attains the
oracle win probability once $n\ge6$, and no \mbox{$p$-blind} rule---randomized or
not---can be uniformly optimal at a fixed horizon.

\subsection{Finite sample comparison}
\label{secFiniteSampleComparison}

We turn to the strict separation between the plug-in rule and the oracle.
\begin{prop}
\label{prop:no_p_equality_n_ge_6}
(i) For $n\in\{2,3\}$, one has $W_n(p)=V_n(p)$ if and only if $p\in[\tfrac12,1)$.
\\
(ii) For $n\in\{4,5\}$, one has $W_n(p)=V_n(p)$ if and only if $p=\tfrac12$.
\\
(iii) For $n\ge 6$, there is no~$p\in(0,1)$ such that~$W_n(p)=V_n(p)$.
\end{prop}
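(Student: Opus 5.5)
The plan is to reduce the equality $W_n(p)=V_n(p)$ to a pathwise optimality condition, and then to exhibit, for each $n$ and $p$, one reachable history on which the plug-in rule takes a strictly suboptimal action. Fix $p\in(0,1)$. Every finite history has positive $\mathbb P_p$-probability, and $V_n(p)$ is the optimal value of the stopping problem with $p$ known. The first step is therefore a standard dynamic-programming argument: a stopping rule attains $V_n(p)$ if and only if, at every history on which it has not yet stopped and $X_t=1$, its action (stop or continue) attains the Bellman maximum.

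For one direction, suppose the rule takes a strictly suboptimal action at such a history. Its continuation value is at most the optimal one, so the value at that node drops strictly. This loss is multiplied by a positive probability, so $W_n(p)<V_n(p)$. For the converse, a rule that is optimal at every reachable node attains $V_n(p)$ by backward induction.

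The Bellman comparison itself comes from the monotone structure behind the sum-the-odds theorem. At a success at time $t<n$, stopping wins with probability $(1-p)^{n-t}$. Continuing optimally yields $g(\min\{n-t,m(p)\})$ (or, in general, $\max_{\ell\le n-t}g(\ell)$), with $g$ from~(\ref{eq:gdef}). Hence stopping is strictly optimal when $n-t+1\le m(p)$ and $p\ne 1/(n-t+1)$, continuing is strictly optimal when $n-t+1>m(p)+1$ or ($n-t+1=m(p)+1$ and $p\ne 1/(m(p)+1)$), and the only ties occur at $p=1/(n-t+1)$. I will verify these claims using $g(\ell)-g(\ell-1)=p(1-p)^{\ell-2}(1-\ell p)$.

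Next I identify two families of reachable histories for the plug-in rule, recalling from~(\ref{StoppingTimePlugin}) and~(\ref{bt}) that it stops on a success at $t<n$ iff $S_t\le b_t$.

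(a) The all-success history. Here $\hat p_s=1$ for every $s$, so the plug-in rule never stops before $n$, and this history is always reachable.

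(b) The history with a single success at time $t$ and no earlier success. It is reachable, and the plug-in rule stops there iff $1\le b_t$, i.e.\ iff $t>n-t+1$.

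With these I treat the main case $n\ge6$.

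If $p<\tfrac12$, then $m(p)\ge2$. At $t=\max\{1,n-m(p)+1\}<n$, the oracle strictly prefers to stop: the only potential tie would be at $p=1/m(p)$, which is excluded because $p\in[\frac1{m+1},\frac1m)$, and the case $m(p)\ge n$ is handled in the same way at $t=1$. History (a), however, continues, so the plug-in rule errs.

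If $p>\tfrac12$, use history (b) at $t=n-1$. It stops as soon as $n-1>2$, while continuing is strictly optimal since $p>1-p$.

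If $p=\tfrac12$, use history (b) at $t=n-2$. Continuing is strictly optimal because $g(2)=\tfrac12>g(3)=\tfrac38$, and the plug-in rule stops as soon as $n-2>3$, i.e.\ $n\ge6$. This gives~(iii).

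For $n\le5$ I expect a finite case check rather than a general argument. I will list the values $b_t$ for $t>\lceil n/2\rceil$, recalling~(\ref{noearlystop}), and compare them with the Bellman sign conditions at each $p$. For $n\in\{2,3\}$, $b_t=0$ at every relevant $t<n$, so the plug-in rule only stops at $n$; this is optimal exactly when $m(p)=1$, i.e.\ $p\ge\tfrac12$. For $p<\tfrac12$, history (a) at $t=n-1$ gives a strict error. For $n\in\{4,5\}$, the arguments above still give errors for $p\ne\tfrac12$. At $p=\tfrac12$, one must check that every reachable stopping decision of the plug-in rule either matches the strict optimal action or occurs at the tie time $t=n-1$, where $p=1/(n-t+1)$; for instance, at $n=5$ and $t=3$ one has $b_3=0$, so the plug-in rule does not stop, as required.

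The main obstacle I anticipate is making the characterization ``$W_n(p)=V_n(p)$ iff every reachable action is Bellman-optimal'' fully rigorous, including the tie cases, together with the boundary regime $m(p)\ge n$. The small-$n$ enumeration is tedious but routine.
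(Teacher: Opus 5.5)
Your argument is correct, but it is organized differently from the paper's.

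The paper handles (i)--(ii) by brute force. It specializes Theorem~\ref{thm:exact_Wn_plugin} to obtain $W_2,\dots,W_5$ as explicit polynomials, then checks the sign of $V_n-W_n$ on each piece of the piecewise formula for $V_n$. For (iii) it works at the single time $t_\star=\lceil n/2\rceil+1$, where $b_{t_\star}=1$. There the plug-in rule stops on $\{S_{t_\star-1}=0,X_{t_\star}=1\}$ but continues on $\{S_{t_\star-1}=1,X_{t_\star}=1\}$, whereas an optimal rule must act history-independently at every non-tie time. The one tie value $p=1/(n-t_\star+1)$ is then handled separately at $t_\star+1$. That argument never has to identify \emph{which} action is optimal.

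You instead compute the sign of the one-step stop/continue comparison and exhibit three witness histories, which covers all $n$ uniformly. The witnesses for $p<\frac12$ and $p>\frac12$ give strict inequality for every $n\ge2$ and every $n\ge4$, respectively. The witness at $t=n-2$ for $p=\frac12$ shows exactly why $6$ is the cutoff. The small-$n$ work then reduces to the $b_t$ table for the equality cases, with no polynomial algebra, and it makes the origin of the thresholds $4$ and $6$ transparent.

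The step you flag as the main obstacle is settled by the switching construction in the proof of Lemma~\ref{LemmaLowerBound}. Alter the plug-in rule only on the witness history, so that it takes the strictly better action there (and follows the $p$-oracle threshold afterwards if that action is to continue). This gives a rule $\tilde\pi$ with $W_n^{\tilde\pi}(p)-W_n(p)\ge\mathbb P_p[\text{witness history}]\times(\text{strict gap})>0$, while $V_n(p)\ge W_n^{\tilde\pi}(p)$. The converse direction is needed only in the equality cases, where backward induction is immediate; for $n\in\{2,3\}$ the plug-in rule is simply $\pi^{\rm last}$.

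Finally, for $p<\frac12$ you can take $t=n-1$ directly. On the all-success history the plug-in rule continues there, whereas stopping wins with probability $1-p>p$. This removes the boundary regime $m(p)\ge n$ altogether.
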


\begin{proof}[Proof of Proposition~\ref{prop:no_p_equality_n_ge_6}]
(i)--(ii) Note that the oracle win probability in~(\ref{WinOracle}) can be written in the familiar piecewise form
$$
V_n(p)=
\begin{cases}
\ np(1-p)^{n-1} & \textrm{if } 0<p<\tfrac1n\\
\ (n-1)\,p(1-p)^{n-2} & \textrm{if } \tfrac1n\le p<\tfrac1{n-1}\\
\ \hspace{8mm}\vdots & \hspace{8mm}\vdots\\
\ 2p(1-p) & \textrm{if }  \tfrac13\le p<\tfrac12\\
\ p & \textrm{if } \tfrac12\le p<1.
\end{cases}
$$
Also, by specializing Theorem~\ref{thm:exact_Wn_plugin} to the corresponding values of~$n$, one obtains that
$$
W_2(p)=p,
\qquad
W_3(p)=p,
\qquad
W_4(p)=p(1-p)^3+p-p^2(1-p)^2,
$$
and
$$
W_5(p)=p(1-p)^4+p-p^2(1-p)^3.
$$
We verify (i)--(ii) by a case analysis for~$n=2,3,4,5$.


\smallskip
\noindent\emph{Case $n=2$.}
For $0<p<\tfrac12$,
$
V_2(p)-W_2(p)=2p(1-p)-p=p(1-2p)>0,
$
while for $\tfrac12\le p<1$ we have $V_2(p)-W_2(p)=p-p=0$.


\smallskip
\noindent\emph{Case $n=3$.}
If $p\ge\tfrac12$, then $V_3(p)-W_3(p)=p-p=0$.
If $p\in[\tfrac13,\tfrac12)$, then
\[
V_3(p)-W_3(p)=2p(1-p)-p=p(1-2p)>0.
\]
If $p\in(0,\tfrac13)$, then
\[
V_3(p)-W_3(p)=3p(1-p)^2-p=p\bigl(3(1-p)^2-1\bigr)>0,
\]
since $p<\tfrac13$ implies $1-p>\tfrac23$, hence $3(1-p)^2>4/3$.


\smallskip
\noindent\emph{Case $n=4$.}
A direct algebraic simplification gives:
\[
\begin{aligned}
p\in[\tfrac12,1):\quad &V_4(p)-W_4(p)=p(1-p)^2(2p-1)\ge 0, \ \text{with equality only if }p=\tfrac12,\\
p\in[\tfrac13,\tfrac12):\quad &V_4(p)-W_4(p)=p^2(2-p)(1-2p)>0,\\
p\in[\tfrac14,\tfrac13):\quad &V_4(p)-W_4(p)
=p\{2(1-p)^2(1+p)-1\},\\
p\in(0,\tfrac14):\quad &V_4(p)-W_4(p)
=p\{(1-p)^2(3-2p)-1\}
.
\end{aligned}
\]
We consider the last two cases.

\begin{itemize}
\item
If $p\in[\tfrac14,\tfrac13)$, then $(1-p)^2\ge 4/9$ and $1+p\ge 5/4$, hence
$
2(1-p)^2(1+p) \ge 
10/9>1$,
which shows that~$V_4(p)-W_4(p)>0$. 
\vspace{2mm}

\item
If $p\in(0,\tfrac14)$, then $(1-p)^2>9/16$ and $3-2p>5/2$, hence
$
(1-p)^2(3-2p) >
45/32>1$,
so that~$V_4(p)-W_4(p)>0$. 
\end{itemize}

\noindent
Therefore, $V_4(p)>W_4(p)$ for all $p\neq\tfrac12$, and $V_4(\tfrac12)=W_4(\tfrac12)=\tfrac12$.


\smallskip
\noindent\emph{Case $n=5$.}
Again, simplifying $V_5(p)-W_5(p)$ on each interval yields:
\[
\begin{aligned}
p\in[\tfrac12,1):\quad &V_5(p)-W_5(p)=p(1-p)^3(2p-1)\ge 0,\ \text{with equality only if }p=\tfrac12,\\
p\in[\tfrac13,\tfrac12):\quad &V_5(p)-W_5(p)=p^2(1-2p)(p^2-3p+3)>0,\\
p\in[\tfrac14,\tfrac13):\quad &V_5(p)-W_5(p)
= p\{(1-p)^2 (2+3p-2p^2) -1\}
,\\
p\in[\tfrac15,\tfrac14):\quad &V_5(p)-W_5(p)
= p\{(1-p)^3(3+2p)-1 \}
,\\
p\in(0,\tfrac15):\quad &V_5(p)-W_5(p)
= p\{(1-p)^3(4-3p)-1 \}
.
\end{aligned}
\]
We consider the last three cases.

\begin{itemize}
\item
If $p\in[\tfrac14,\tfrac13)$, then $(1-p)^2\ge 4/9$ and
$2+3p-2p^2\geq 21/8$, hence~$(1-p)^2 (2+3p-2p^2)\ge 7/6$, which yields~$V_5(p)-W_5(p)>0$.
\vspace{2mm}

\item
If $p\in[\tfrac15,\tfrac14)$, then $(1-p)^3\ge 27/64$ and $3+2p\ge 17/5$, hence $(1-p)^3(3+2p) \ge 459/320$, so that $V_5(p)-W_5(p)>0$.
\vspace{2mm}

\item
If $p\in(0,\tfrac15)$, then $(1-p)^3>64/125$ and $4-3p>17/5$, hence $(1-p)^3(4-3p) > 1088/625$, which implies again that $V_5(p)-W_5(p)>0$.
\end{itemize}

\noindent
Thus, $V_5(p)>W_5(p)$ for all $p\neq\tfrac12$, and $V_5(\tfrac12)=W_5(\tfrac12)=\tfrac12$.
\vspace{3mm}

(iii) 
Let
$
t_\star:=\lceil\frac{n}{2}\rceil+1
$ and note that, since $n\ge 6$, we have $4\leq t_\star\le n-2$. Recalling~(\ref{noearlystop}), the plug-in rule cannot stop before time~$t_\star$. Moreover, since
$$
1<\frac{t_\star}{n-t_\star+1}\le 2
,
$$
we have~$b_{t_\star}=1$, so at time $t_\star$ the plug-in rule stops on a success if and only if $S_{t_\star}\le 1$, \emph{i.e.}, if and only if~$S_{t_\star-1}=0$; see~(\ref{bt}).
Consider then the events
$$
E_{\mathrm{stop}}
:=\{S_{t_\star-1}=0, X_{t_\star}=1\}
\quad
\textrm{ and }
\quad
E_{\mathrm{cont}}
:=\{S_{t_\star-1}=1, X_{t_\star}=1\}
.
$$
On~$E_{\mathrm{stop}}$, we have $S_{t_\star}=1\le b_{t_\star}=1$, so the plug-in rule stops at $t_\star$, whereas on~$E_{\mathrm{cont}}$, we have $S_{t_\star}=2>b_{t_\star}=1$, so the plug-in rule does not stop at $t_\star$. Thus, at the same time~$t_\star$ and on the same observation $X_{t_\star}=1$, the plug-in rule sometimes stops and sometimes continues, depending on the past (since~$\mathbb P_p[E_{\mathrm{stop}}]=(1-p)^{t_\star-1}p$ and $\mathbb P_p[E_{\mathrm{cont}}]=(t_\star-1)p^2(1-p)^{t_\star-2}$,
both events have positive probability under~$\mathbb P_p$ for any $p\in(0,1)$).
\vspace{2mm}

Now fix $p\in(0,1)$ and consider the homogeneous known-$p$ problem. By the sum-the-odds theorem, there exists a \emph{threshold} rule---that is, a rule that stops on the first success on or after some deterministic time~$s$---that is optimal. More precisely,  $s$ is the quantity~$s_n(p)$ in~(\ref{eq:sn_homo}),
and the resulting optimal win probability is~$V_n(p)$ in~(\ref{WinOracle}). In the boundary case when~$p=1/(m+1)$ for some positive integer~$m$, there are exactly two oracle-optimal thresholds, based on~$s=n-m+1$ and~$\tilde s=s-1=n-m$
(indeed, the threshold rule using~$\tilde s$ wins if and only if there is exactly one success in~$\{n-m,n-m+1,\ldots,n\}$, which occurs with probability
$$
(m+1)p(1-p)^m
=
\Big( \frac{m}{m+1} \Big)^m
=
mp(1-p)^{m-1}
=
V_n(p)
,
$$
and it is easy to check that all other thresholds provide a strictly lower win probability).
Crucially, this information is enough to pin down the optimal action after observing~\mbox{$X_t=1$}, even though the sum-the-odds theorem does not state that any optimal rule must be a threshold rule. Indeed, assume that one is at some time $t\in\{1,\dots,n-1\}$ and has not stopped yet, and that one observes $X_t=1$. If one stops at~$t$, then the conditional win probability is~$(1-p)^{n-t}$. If one continues, then 
the maximal conditional win probability from time~$t+1$ onward is precisely the oracle win probability for a horizon of length $n-t$, namely~$V_{n-t}(p)$.
Therefore, the sign of the strict comparison
$$
(1-p)^{n-t} \ \text{ versus }\ V_{n-t}(p)
$$
determines whether optimality forces ``stop'' or ``continue'' at time~$t$ upon observing $X_t=1$.

Now, because the sum-the-odds theorem characterizes the oracle-optimal thresholds as above (and in the boundary case yields
exactly two adjacent optimal thresholds), there is \emph{at most one} time index at which the two actions (stop/continue upon observing $X_t=1$) can be tied,
namely $t=s-1$ in the boundary case $p=1/(m+1)$. At all other times, the comparison is strict, and hence
every optimal rule (threshold or not) must take a \emph{deterministic} action upon observing $X_t=1$.
Consequently, if the plug-in rule were optimal at $p$, then at time $t_\star$ it would have to take a
deterministic action upon observing~$X_{t_\star}=1$, except possibly in the single boundary situation
where~$t_\star$ coincides with that unique “tie time’’ $s-1$. This allows us to conclude the proof by considering two cases.
\vspace{2mm}

\emph{Case~(a): $t_\star$ is not the tie time for $p$.}
Then, as explained above, optimality deterministically forces either to stop or not to stop on $X_{t_\star}=1$. However, we have seen that the plug-in rule stops on~$E_{\mathrm{stop}}$ and continues on $E_{\mathrm{cont}}$, and both events have positive probability. Therefore the plug-in rule cannot be optimal, and~$W_n(p)<V_n(p)$.
\vspace{2mm}

\emph{Case~(b): $t_\star$ is the tie time for $p$.}
Then, the discussion above implies that~$p$ must be equal to the unique parameter value
$$
p_\star:=\frac{1}{n-t_\star+1}
$$
for which $t_\star=s_n(p_\star)-1$, and the two oracle-optimal thresholds are $s=t_\star$ and $s=t_\star+1$. In particular, at time $s=t_\star+1$ there is no tie: the oracle-optimal action upon observing~\mbox{$X_{t_\star+1}=1$} is uniquely determined and consists in stopping. 
We now show that the plug-in rule fails to take this unique optimal action at time $s=t_\star+1$ with positive
probability, hence cannot be optimal at~$p_\star$.
First note that
$$
b_{t_\star+1}=\biggl\lceil\frac{t_\star+1}{n-(t_\star+1)+1}\biggr\rceil-1
\le 2
.
$$
Consider the event
$
F:=\{S_{t_\star-1}=1,\ X_{t_\star}=1,\ X_{t_\star+1}=1\}$.
On $F$, we have $X_{t_\star+1}=1$ and
$
S_{t_\star+1}=S_{t_\star-1}+X_{t_\star}+X_{t_\star+1}=3>b_{t_\star+1}$,
so the plug-in stopping condition $S_{t_\star+1}\le b_{t_\star+1}$ fails and the plug-in rule does not
stop at time $t_\star+1$ despite $X_{t_\star+1}=1$. Since~$\mathbb P_{p_\star}[F]=(t_\star-1)p_\star^3(1-p_\star)^{t_\star-2}>0$, the plug-in rule violates the (strict) optimal action at time $s=t_\star+1$ on a set of positive probability. Therefore, it is not optimal and~$W_n(p_\star)<V_n(p_\star)$.
\vspace{2mm}

Combining the two cases shows that for every $p\in(0,1)$ one has $W_n(p)<V_n(p)$, which establishes the result.
\end{proof}

\subsection{Monotonicity}
\label{secMonotonocity}
In this section, we provide a computer-assisted yet fully rigorous verification that the win probability~$W_n(p)$ of the plug-in rule is nondecreasing in~$p\in (0,1)$ for all $n\le 59$, whereas monotonicity fails for
$n=60$. 
The argument relies on the following result.

\begin{lem}
\label{lem:Wn_polynomial}
For any~$n\ge 2$,  $W_n(p)$ is a
polynomial in $p$ with integer coefficients, and so is its derivative~$W_n'(p)$.
\end{lem}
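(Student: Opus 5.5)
The plan is to read the integrality off the exact representation of Theorem~\ref{thm:exact_Wn_plugin}, by induction along the recursion~\eqref{eq:u_recursion}. First, I will show that each state probability $u_{t,k}$, viewed as a function of~$p$, lies in $\mathbb Z[p]$. The initialization $u_{0,k}=\mathbb I[k=0]$ consists of integer constants. Each step of~\eqref{eq:u_recursion} forms $p\,u_{t-1,k-1}\mathbb I[k>b_t]+(1-p)u_{t-1,k}$. The indicators $\mathbb I[k>b_t]$ are $0$ or $1$ and do not depend on~$p$, since $b_t$ in~(\ref{bt}) depends only on~$t$ and~$n$. The multipliers $p$ and $1-p$ are in $\mathbb Z[p]$. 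Because $\mathbb Z[p]$ is a ring, induction on~$t$ gives $u_{t,k}\in\mathbb Z[p]$ for all $t=0,\ldots,n$ and $k=0,\ldots,t$. The boundary index $k=0$ is handled by the second line of the recursion in the same way.

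Next, I will plug these into~\eqref{eq:Wn_exact}. Here $W_n(p)$ is a finite sum, over $t$ from $\lceil n/2\rceil+1$ to $n$, of the terms $p(1-p)^{n-t}\sum_{k=1}^{b_t}u_{t-1,k-1}$. Each term is a product and sum of elements of $\mathbb Z[p]$, with ranges of summation that do not depend on~$p$. Hence $W_n\in\mathbb Z[p]$. The convention $b_n=n$ for the terminal clause causes no difficulty, because it is again a $p$-free integer.

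Finally, the derivative of $\sum_j a_jp^j$ with $a_j\in\mathbb Z$ is $\sum_j ja_jp^{j-1}$, whose coefficients $ja_j$ are integers. So $W_n'\in\mathbb Z[p]$ as well.

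The only delicate point is making sure that no $p$-dependence enters through the stopping region: the rule itself, through $b_t$, and the summation limits must be independent of~$p$. This holds because $\hat\tau_n$ is $p$-blind. Once that is checked, the argument is a routine induction.
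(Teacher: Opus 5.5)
Your proposal is correct and follows essentially the same route as the paper. Both arguments run an induction along the recursion~\eqref{eq:u_recursion}, using that the indicators $\mathbb I[k>b_t]$ are $p$-free, to get $u_{t,k}\in\mathbb Z[p]$, and then read off integrality of $W_n$ and $W_n'$ from \eqref{eq:Wn_exact}. The paper's extra step of expanding $(1-p)^{n-t}$ binomially is only cosmetic.
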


\begin{proof}
Fix $n\ge 2$. Note that Theorem~\ref{thm:exact_Wn_plugin} implies that
$$
W_n(p)
=
p
\sum_{t=\lceil\frac{n}{2}\rceil+1}^n 
\sum_{j=0}^{n-t} 
\sum_{k=1}^{b_t}
\textstyle{n-t \choose j} 
(-1)^j p^j
 u_{t-1,k-1}(p)
,
$$
where the quantities~$u_{t,k}(p)$, $t=0,1,\ldots,n$, $k=0,1,\ldots,t$, satisfy the recursion
\begin{align*}
u_{t,k}(p) &= p\,u_{t-1,k-1}(p)\,\mathbb I[k>b_t] + (1-p)u_{t-1,k}(p)
,
\qquad t=1,\dots,n,\ k\ge 1, \\
u_{t,0}(p) &= (1-p)u_{t-1,0}(p), \qquad t=1,\dots,n,
\end{align*}
initialized at $u_{0,k}(p)=\mathbb{I}[k=0]$. Since~$\mathbb I[k>b_t]\in\{0,1\}$ does not depend on~$p$, an induction argument directly yields that each~$u_{t,k}(p)$ is a polynomial in~$p$ with integer coefficients. It follows that~$W_n(p)$, hence also~$W_n'(p)$, is a polynomial in~$p$ with integer coefficients.
\end{proof}

\begin{prop}
\label{prop:first_failure_monotonicity}
For all $n\le 59$, the function $p\mapsto W_n(p)$ is nondecreasing on $(0,1)$. For~$n=60$, this function is  not
monotone on~$(0,1)$.
\end{prop}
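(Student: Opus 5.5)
The plan is to make Proposition~\ref{prop:first_failure_monotonicity} a finite, exact computation, using Lemma~\ref{lem:Wn_polynomial}. For each $n\in\{2,\dots,60\}$ we would run the recursion~\eqref{eq:u_recursion} of Theorem~\ref{thm:exact_Wn_plugin} symbolically. The thresholds $b_t$ are integers that do not depend on $p$, so we work with polynomials in $\mathbb Z[p]$ and exact integer arithmetic. Each $u_{t,k}$ is stored as a coefficient vector of degree at most $t$. Formula~\eqref{eq:Wn_exact}, after expanding $(1-p)^{n-t}$, then gives the integer coefficient vector of $W_n$, of degree at most $n$, and differentiation gives $W_n'$. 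No floating point enters, so everything after this stage is a statement about explicit integer polynomials.

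For the positive part ($n\le59$), $W_n$ is nondecreasing on $(0,1)$ if and only if $W_n'\ge0$ there; by continuity this is the same as $W_n'$ having no sign change on $(0,1)$ and being nonnegative at one interior point. We would certify this with Sturm sequences, or equivalently real-root isolation by Descartes' rule of signs under the substitution $p=x/(1+x)$, which maps $(0,1)$ onto $(0,\infty)$. The certificate is the exact number of distinct roots of $W_n'$ in $(0,1)$ together with their multiplicities, taken from the squarefree factorization. Monotonicity then holds if every root in $(0,1)$ has even multiplicity, or if there are none, and $W_n'$ is positive at, say, $p=\tfrac12$ (checked in rational arithmetic). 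Small roots near $p=0$ cause no trouble: $W_n(p)\sim$ const$\cdot p$ there, so $W_n'(0^+)>0$ can also be read off the linear coefficient.

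For $n=60$ it suffices to exhibit two rationals $0<p_1<p_2<1$ with $W_{60}(p_1)>W_{60}(p_2)$, evaluated exactly as rationals. Equivalently, one can give a rational $q\in(0,1)$ with $W_{60}'(q)<0$; since $W_{60}'$ is positive near $0$, this forces a sign change and hence non-monotonicity. To locate such a point, we would first scan numerically near a transition point, presumably close to some $1/k$ given the local analysis of Section~\ref{sec:constants}. We would then confirm the sign by exact rational evaluation at a nearby rational with small denominator.

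The main obstacle is the size of the exact computation. The integer coefficients of $W_n$ for $n$ near $60$ are huge, and they alternate in sign after expansion. Sturm sequences of degree-$59$ polynomials have large coefficient growth, so a subresultant or modular variant, or Descartes-based isolation, would be preferred. The deviation at $n=60$ is described as minute, so locating the negative-derivative point may need fine numerical search first. Exact rational arithmetic then makes the final certificate rigorous regardless of how small the dip is.
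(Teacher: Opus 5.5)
Your plan is correct and is essentially the paper's proof: both build $W_n'$ exactly as an integer polynomial through Lemma~\ref{lem:Wn_polynomial} and the recursion of Theorem~\ref{thm:exact_Wn_plugin}, then decide its sign on $(0,1)$ for $n=2,\dots,60$ by an exact real-algebraic procedure; the paper simply calls Mathematica's \texttt{Reduce}, which performs internally the root isolation you propose to do with Sturm or Descartes. The only difference is cosmetic and occurs at $n=60$: you certify non-monotonicity with a rational witness $q$ satisfying $W_{60}'(q)<0$ together with positivity of $W_{60}'$ near $0$ (read off the linear coefficient), whereas the paper uses the interval returned by \texttt{Reduce} together with the boundary limits $W_n(0^+)=0$ and $W_n(1^-)=1$, obtained from $p^n\le W_n(p)\le np$.
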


\begin{proof}
By Lemma~\ref{lem:Wn_polynomial}, the map $p\mapsto W_n(p)$ is $C^1$ on $(0,1)$. Hence it fails to be nondecreasing on $(0,1)$ if and only if
\begin{equation}
\label{FirstOrderSentence}
\exists\,p\in(0,1)\ \text{such that}\ W_n'(p)<0.
\end{equation}
Since $W_n'(p)$ is a polynomial with integer (hence rational) coefficients, deciding the first-order sentence~(\ref{FirstOrderSentence}) is an exact decision problem in real algebraic geometry and can be resolved by quantifier elimination over the reals.

We performed an exact symbolic verification for $n\in\{2,3,\dots,60\}$ using real quantifier elimination
on the formula $(0<p<1)\ \wedge\ (W_n'(p)<0)$. The outcome is:
(i) for every $n\le 59$, the formula is unsatisfiable, hence $W_n'(p)\ge 0$ for all~$p\in(0,1)$, so that~$p\mapsto W_n(p)$ is nondecreasing on~$(0,1)$; (ii) for $n=60$, the formula is satisfiable, and the computation returns an explicit nonempty semi-algebraic set of values of~$p$ (in fact, an open interval~$\mathcal{I}=(\varphi,\psi)$ with algebraic endpoints) on which $W_{60}'(p)<0$. Thus, $p\mapsto W_{60}(p)$ is not nondecreasing on~$(0,1)$.  Since~$p^n\leq W_n(p)\leq np$ (the lower-bound results from the fact that the plug-in rule wins on $\{X_1=1,\ldots,X_n=1\}$, whereas the upper-bound was established in the proof of Theorem~\ref{thm:uniform-etan}), we have
$$
\lim_{p\stackrel{>}{\to} 0} 
W_{n}(p)=0
\qquad
\textrm{ and }
\qquad
\lim_{p\stackrel{<}{\to} 1} 
 W_{n}(p)=1
$$
for all~$n\geq 2$, which implies that~$p\mapsto W_{60}(p)$ is not nonincreasing on~$(0,1)$. Therefore,  $p\mapsto W_{60}(p)$ is not monotone on~$(0,1)$.
\end{proof}

For reproducibility purposes, we provide  the following \textsc{Mathematica} code that constructs $W_n(p)$ exactly as a polynomial in $p$ with integer coefficients from Theorem~\ref{thm:exact_Wn_plugin}, differentiates it symbolically, and then uses \texttt{Reduce[..., Reals]} to decide whether the derivative~$W_n'(p)$ is negative for some $p\in(0,1)$.

\begin{lstlisting}
(* Define auxiliary quantities *)
b[t_, n_] := If[t < n, Ceiling[t/(n - t + 1)] - 1, n];

(* Obtain the exact expression for W_n(p)from Theorem 2.1 *)
Wp[n_] := Module[{uPrevious, uCurrent, ell, t, k, bt, W, hnLocal},
   hnLocal = Ceiling[n/2] + 1;
   uPrevious = ConstantArray[0, n + 1];
   uPrevious[[1]] = 1;
   ell = ConstantArray[0, n + 1];
   For[t = 1, t <= n, t++, bt = b[t, n];
    uCurrent = ConstantArray[0, n + 1];
    For[k = 0, k <= t, k++, 
     uCurrent[[k + 1]] = (1 - p) uPrevious[[k + 1]] + 
        If[k >= 1 && k > bt, p uPrevious[[k]], 0];];
    ell[[t + 1]] = If[bt >= 1, p Sum[uPrevious[[k]], {k, 1, bt}], 0];
    uPrevious = uCurrent;];
   W = Sum[(1 - p)^(n - t) ell[[t + 1]], {t, hnLocal, n}];
   Expand[W]];

(* Obtain the exact expression for the derivative of W_n(p) *)
WpPrime[n_Integer] := Expand[D[Wp[n], p]];

(* Decide existence of p in (0,1) with D_n(p)<0 for all n in {2,...,\
Nmax} *)
allFailures[Nmax_] := 
 Module[{n, cond, res = {}}, 
  For[n = 2, n <= Nmax, n++, 
   cond = Reduce[0 < p < 1 && WpPrime[n] < 0, p, Reals];
   If[cond =!= False, AppendTo[res, {n, cond}]];];
  res]
  
allFailures[60] 
\end{lstlisting}

The code returns~$n=60$ as the only value of~$n\in\{2,3,\ldots,60\}$ for which the derivative becomes negative on~$(0,1)$, and indicates that the domain on which it is negative is~$\mathcal{I}=(\varphi,\psi)$, where the algebraic endpoints are (up to four decimal digits)~$\varphi=0.0537$ and~$\psi=0.0602$. Figure~\ref{Fig4} illustrates the lack of monotonicity of~$p\mapsto W_{60}(p)$ and shows the plot of the monotone function~$p\mapsto W_{59}(p)$ for the sake of comparison.

\begin{figure}[h!]
\hspace*{-4.5mm}
\includegraphics[width=1.035\textwidth]{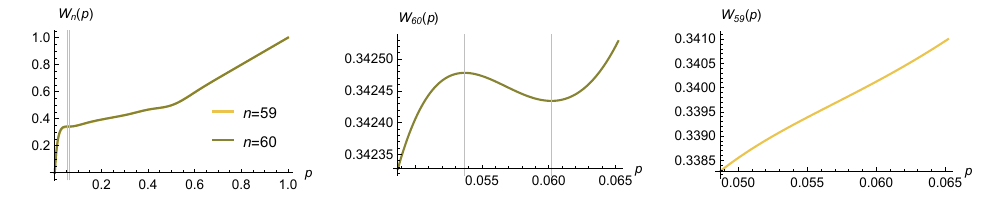}
\caption{(Left panel:) $W_n(p)$ as a function of $p\in(0,1)$ for $n=59$ and $n=60$ (the curves are visually indistinguishable at this scale); the vertical lines indicate~$\varphi$ and~$\psi$, the algebraic endpoints of the interval~$\mathcal{I}$ on which~$W_{60}(p)$ is monotone decreasing. (Middle panel:) zoom of $W_{60}(p)$ on a region containing~$\mathcal{I}$. (Right panel:) the same zoom for $W_{59}(p)$.
}
\label{Fig4}
\end{figure}

We stress that the proof of Proposition~\ref{prop:first_failure_monotonicity} above is ``computer-assisted'' only in the sense that a certified exact algebraic procedure (quantifier elimination) is invoked to decide the sign of an integer polynomial on an interval, but that the procedure is exact (no floating-point arithmetic is involved: \texttt{p} is symbolic and \texttt{Expand}, \texttt{D}, and \texttt{Reduce} are executed in exact arithmetic).

\subsection{Finite sample barrier}
\label{secFiniteSampleBarrier}

Consider now the homogeneous stopping problem with fixed horizon~$n\ge 2$, and
recall the win probability~$W_n^\pi(p)$ of a rule~$\pi$ defined
in~(\ref{eq:Wpi_def}). For rules~$\pi$, $\pi'$ and~$\pi^\star$, we say that
$\pi$ \emph{dominates} $\pi'$ if
$$
W_n^\pi(p)\ge W_n^{\pi'}(p)\quad\text{for all }p\in(0,1),
$$
and that~$\pi^\star$ is an $n$-optimal rule if~$\pi^\star$ dominates every other
rule (the uniformity in~$p$ in these definitions encodes the unknown-$p$ nature
of the stopping problem).
For any fixed~$p_\star\in(0,1)$, one may use the $p_\star$-oracle rule as a \mbox{$p$-blind} rule (this rule will be optimal if~$p=p_\star$, but of course it is expected to perform poorly if~$p$ is far from~$p_\star$). Comparing then against the fixed oracle rule at, \emph{e.g.},~$p_\star=\frac{1}{4}$, a direct corollary of Proposition~\ref{prop:no_p_equality_n_ge_6} is that there is no~$n\geq 2$ for which the plug-in rule is $n$-optimal. As the following result shows, however, this is not a deficiency of the plug-in rule; instead, it reflects the intrinsic difficulty of the unknown-$p$ stopping problem.


\begin{thm}
\label{thm:no_uniform_opt}
There is no~$n\ge 2$ for which an $n$-optimal rule exists, and this is the case even if one allows for randomized rules.
\end{thm}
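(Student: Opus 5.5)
Suppose, for contradiction, that for some $n\ge2$ a (possibly randomized) rule $\pi^\star$ is $n$-optimal. The plan is to use fixed-oracle rules as competitors. For every $p_\star\in(0,1)$, the $p_\star$-oracle threshold rule (stop on the first success at or after $s_n(p_\star)$) is a legitimate $p$-blind rule, and its win probability at $p=p_\star$ equals $V_n(p_\star)$. Dominance would therefore give $W_n^{\pi^\star}(p)\ge V_n(p)$ for every $p$. Since $V_n(p)$ is the optimal value among all rules at $p$, including randomized ones (randomization cannot beat the known-$p$ optimum), we would get $W_n^{\pi^\star}(p)=V_n(p)$ for all $p\in(0,1)$. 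In other words, $\pi^\star$ would have to be simultaneously oracle-optimal at every $p$. The task then reduces to showing that no single rule can be optimal at two well-chosen values of $p$.

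I would reuse the action-forcing argument from the proof of Proposition~\ref{prop:no_p_equality_n_ge_6}(iii). At any time $t\le n-1$, upon observing $X_t=1$ without having stopped, stopping yields the conditional win probability $(1-p)^{n-t}$, whereas continuing optimally yields $V_{n-t}(p)$. Suppose a rule reaches the state $\{\text{not stopped by } t-1,\ X_t=1\}$ with positive probability. If that rule is optimal at $p$, then whenever the comparison between $(1-p)^{n-t}$ and $V_{n-t}(p)$ is strict, it must take the better action almost surely on that state, whatever the past and whatever its internal randomization. The natural choice is $t=n-1$, with
\[
p_1=\tfrac13
\quad\text{and}\quad
p_2=\tfrac34 .
\]
At $p_1<\frac12$ we have $1-p_1>p_1=V_1(p_1)$, so on $\{X_{n-1}=1\}$ the rule must stop at $n-1$ if it has not stopped already. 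At $p_2>\frac12$ we have $1-p_2<p_2$, so on the same state it must continue.

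It remains to check that the relevant state has positive probability under both parameters. The main obstacle is that the rule may stop earlier, so the state ``not yet stopped, $X_{n-1}=1$'' could conceivably be null. Here the fact that optimality pins down behaviour at every time helps. At $p_2\ge\frac12$ we have $m(p_2)=1$, and for every $t\le n-1$ the strict inequality $(1-p_2)^{n-t}<V_{n-t}(p_2)=p_2$ holds. Hence optimality at $p_2$ forces the rule never to stop before $n$ (almost surely, with respect to both data and randomization) on histories of positive probability. Since every finite history has positive probability under both $p_1$ and $p_2$, the set of (history, randomization) outcomes on which the rule stops before $n$ is null under both parameters. Therefore, under $p_1$, the rule has not stopped before $n-1$ almost surely, and the state $\{X_{n-1}=1\}$ has probability $p_1>0$. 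On it, optimality at $p_1$ forces stopping, whereas optimality at $p_2$ forces continuing, on the same data. Because the rule's conditional stopping probability given the history does not depend on $p$, this is a contradiction.

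The case $n=2$ is covered directly: $t=n-1=1$, and the same two parameters work. I would also remark that the argument uses only the conditional-action characterization, so randomization is handled at no extra cost.
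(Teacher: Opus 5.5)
Your proof is correct, but it takes a different route from the paper's. The paper never uses the optimality of oracle rules. It compares $\pi^\star$ only with the two trivial rules $\pi^{\mathrm{first}}$ and $\pi^{\mathrm{last}}$. Dominance over $\pi^{\mathrm{first}}$, after dividing by $p$ and letting $p\to0$, forces the rule to stop on every isolated success, and in particular at time~$1$ whenever $X_1=1$. That gives $W_n^{\pi^\star}(p)\le p(1-p)^{n-1}+1-p$, which cannot dominate $\pi^{\mathrm{last}}$ for $p$ near~$1$.

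You instead use the fixed-oracle rules, together with the fact that randomization cannot beat $V_n(p)$ (condition on $U$), to upgrade $n$-optimality to exact optimality at every~$p$. You then apply the forced-action principle at the last decision time $n-1$: at $p_2=\frac34$ the rule can never stop before~$n$, and since every history has positive probability under both parameters this carries over to $p_1=\frac13$, where stopping on $X_{n-1}=1$ is strictly required.

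What each approach buys:
\begin{itemize}
\item Your argument needs only two competitors, the $\frac13$-oracle and $\pi^{\mathrm{last}}$ (which is the $\frac34$-oracle). It proves the sharper two-point fact that no rule is optimal at some $p<\frac12$ and some $p>\frac12$ simultaneously. It is the exact-optimality counterpart of the stop/continue dichotomy at $n-1$ behind Lemma~\ref{LemmaLowerBound}.
\item The paper's proof uses the first-success, small-$p$ mechanism that reappears in Theorem~\ref{thm:no_uniform_abs}. It is fully self-contained, needing neither the sum-the-odds theorem nor dynamic programming, but it uses dominance along $p\to0$ and at large~$p$ rather than at two fixed points.
\end{itemize}

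One small omission: at $p_2$ you must also exclude stopping on a failure. This is trivial, since it wins with probability~$0$ whereas continuing wins with probability $V_{n-t}(p_2)=p_2>0$. Alternatively, use directly the bound $W_n^{\pi}(p_2)\le p_2-(2p_2-1)\,\mathbb P_{p_2}[\tau_n^\pi\le n-1]$, which is the conditioning argument in the proof of Lemma~\ref{LemmaLowerBound}(ii).
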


\begin{proof}
Fix~$n\geq 2$, and assume \emph{ad absurdum} that $\pi^\star$ is a (possibly randomized) $n$-optimal rule. If $\pi^\star$ is randomized, we realize its internal randomization by an auxiliary random variable $U$, defined on the same probability space, independent of $(X_1,\ldots,X_n)$ and with a distribution that does not depend on $p$. We then write the (possibly randomized) stopping time associated with $\pi^\star$ as
$
\tau_n^\star=\tau_n^\star(X_1,\ldots,X_n;U)$,
and probabilities involving the randomization are taken with respect to $U$ (conditionally on the observed $(X_1,\ldots,X_n)$).

For each~$t\in\{1,\dots,n\}$, denote as~$A_t$ the event that there is exactly one success, occurring at time~$t$:
$
A_t:=\{X_1=\cdots=X_{t-1}=0,\, X_t=1,\, X_{t+1}=\cdots=X_n=0\}
.
$
Then, $\mathbb{P}_p[A_t]=p(1-p)^{n-1}$ for any $t$.
On $A_t$, the last success is at time $t$, so $\pi^\star$ wins on $A_t$ if and only if it stops at time $t$ when it sees the success at $t$. Let then
$$
\alpha_t
:=
\mathbb{P}[\text{$\pi^\star\!$ stops at time $t$}  |  A_t]
=
\mathbb{P}[\tau^\star_n=t  |  A_t]
\in[0,1]
,
$$
where the probability~$\mathbb{P}$ is over the internal randomization variable $U$ (equivalently, under the joint law of $(X_1,\ldots,X_n,U)$, conditional on $A_t$; since $A_t$ depends only on $X_1,\ldots,X_n$, conditioning on $A_t$ does not affect the law of $U$). Since~$\pi^\star$ cannot win when~$\sum_{t=1}^n X_t=0$, the total probability formula provides
\begin{eqnarray}
W_n^{\pi^\star}(p)
&=&
\sum_{t=1}^n 
\alpha_t 
\mathbb{P}_p[A_t]
+
\mathbb{P}_p
\big[\text{$\pi^\star\!$ wins}|
\textstyle\sum_{t=1}^n X_t\ge 2
\big]
\mathbb{P}_p
\big[
\textstyle\sum_{t=1}^n X_t\ge 2
\big]
\nonumber
\\[0mm]
&\le & 
p(1-p)^{n-1}
\sum_{t=1}^n \alpha_t
+
2^n 
p^2
,
\label{eq:W_upper_with_p2}
\end{eqnarray}
since~$\sum_{t=1}^n X_t\sim {\rm  Bin}(n,p)$ yields
$
\mathbb{P}_p
\big[
\textstyle\sum_{t=1}^n X_t\ge 2
\big]
=
p^2 \sum_{k=2}^n \binom{n}{k}
\leq
2^n 
p^2
$.

Now, let $\pi^{\mathrm{first}}$ be the rule that stops at the first time $t$ such that $X_t=1$ (if any). Of course,  this rule wins if and only if there is exactly one success,
so 
\begin{equation}
\label{eq:W_first_exact}
W_n^{\pi^{\mathrm{first}}}(p)
=
\sum_{t=1}^n \mathbb{P}_p[A_t]
= 
np(1-p)^{n-1}
.
\end{equation}
Since $\pi^\star$ dominates $\pi^{\mathrm{first}}$, we have~$W_n^{\pi^\star}(p)\ge W_n^{\pi^{\mathrm{first}}}(p)$ for all~$p\in(0,1)$, so~\eqref{eq:W_upper_with_p2}--\eqref{eq:W_first_exact} yield
$$
p(1-p)^{n-1}
\sum_{t=1}^n \alpha_t
+
2^n p^2
 \ge 
 np(1-p)^{n-1}
\quad\text{for all }
p\in(0,1)
 .
$$
Dividing by~$p$ and letting $p\to 0$ gives
$
\sum_{t=1}^n \alpha_t  \ge n.
$
Since each $\alpha_t\le 1$, we must then have that~$\alpha_t=1$ for all~$t$. In particular,
$\alpha_1=\mathbb{P}[\tau^\star_n=1  |  A_1]=1$. Since~$\tau^\star_n$ is a (possibly randomized) stopping time, the event $\{\tau^\star_n=1\}$ is $\sigma(X_1,U)$-measurable; moreover, $A_1=\{X_1=1\}\cap\{X_2=\cdots=X_n=0\}$ and $\{X_2=\cdots=X_n=0\}$ is independent of~$\sigma(X_1,U)$. Therefore,
$
\mathbb{P}[\tau^\star_n=1  |  A_1]
=
\mathbb{P}[\tau^\star_n=1  |  X_1=1]
$,
so that $\mathbb{P}[\tau^\star_n=1  |  X_1=1]=1$, \emph{i.e.}, $\pi^\star$ almost surely stops at time~$1$ whenever $X_1=1$. Thus, for any~$p\in(0,1)$, we have 
\begin{eqnarray*}
W_n^{\pi^\star}(p)
&=& 
\mathbb{P}_p[\text{$\pi^\star$ wins and }X_1=1]
+
\mathbb{P}_p[\text{$\pi^\star$ wins and }X_1=0]
\\[2mm]
&=&
\mathbb{P}_p[X_1=1, X_2=\cdots=X_n=0]
+
\mathbb{P}_p[\text{$\pi^\star$ wins and }X_1=0]
\\[2mm]
&\leq &
p(1-p)^{n-1}+1-p
.\end{eqnarray*}

Let $\pi^{\mathrm{last}}$ be the rule that never stops before time~$n$ and stops on time~$n$ if~$X_n=1$. Since its win probability is
$
W_n^{\pi^{\mathrm{last}}}(p)
=
\mathbb{P}_p[X_n=1]=p
$
and since $\pi^\star$ dominates $\pi^{\mathrm{last}}$, we must have
$$
p(1-p)^{n-1}+1-p
\geq
p
\quad\text{for all }
p\in(0,1)
.
$$
Since this fails for large~$p$, we obtain a contradiction. Thus, no $n$-optimal rule exists.
\end{proof}


\section{Proofs for Section~2}
\label{appProofsRates}
Both the proof of Theorem~\ref{thm:finite_oracle_ineq_p0}---deferred, on account of its length, to Appendix~\ref{secProofsUpperBound}---and the proofs of Section~\ref{sec:constants} below rest on the following exponential controls.

\begin{lem}
\label{lem:Etails}
Fix $p_0\in(0,1)$ and~$\delta_0>0$. With~$h_n:=\lceil \tfrac n2\rceil+1$, let
$$
E_n^{\delta_0}(p)
:=
\bigg\{\max_{h_n\leq t\leq n}|\hat p_t-p|\le \delta_0\bigg\}
,
$$
and, with~$r_t:=1/(n-t+1)$ and~$M_0
=\lceil 1/p_0\rceil-1$, let
$$
H_t
:=
\Big\{\big|\hat p_{t-1}-r_t\big|\leq \frac{1}{t}\Big\}
\quad
\textrm{ for }
t\in T_n:=\{\max\{1,n-M_0\},\dots,n-1\}
.
$$
Then, there exist positive constants $C_1(\delta_0),c_1(\delta_0),C_2(p_0)$, and $c_2(p_0)$ such that
$$
\mathbb P_p[(E_n^{\delta_0}(p))^c]
\le
C_1(\delta_0)e^{-c_1(\delta_0)n}
\quad
\textrm{ and }
\quad
\mathbb P_p[H_t]
\le
C_2(p_0) e^{-c_2(p_0)n(p-r_t)^2}
$$
for all $n\ge 2$, all $p\in[p_0,1)$, and all~$t\in T_n$.
\end{lem}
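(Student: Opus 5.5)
The plan is to derive both bounds from Hoeffding's inequality, applied to the fixed-time sums $S_t=\sum_{i\le t}X_i$, followed by a union bound over the relevant times.

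\emph{First bound.} For each $t\in\{h_n,\dots,n\}$, Hoeffding gives $\mathbb P_p[|\hat p_t-p|>\delta_0]\le 2e^{-2t\delta_0^2}\le 2e^{-n\delta_0^2}$, since $t\ge h_n\ge n/2$. There are at most $n/2$ such times, so a union bound yields $\mathbb P_p[(E_n^{\delta_0}(p))^c]\le n e^{-n\delta_0^2}$. Absorbing the factor $n$ via $n e^{-n\delta_0^2}\le C_1(\delta_0)e^{-n\delta_0^2/2}$ gives the claim with $c_1(\delta_0)=\delta_0^2/2$. This bound is uniform in $p\in(0,1)$.

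\emph{Second bound.} Fix $t\in T_n$. Then $n-t+1\in\{2,\dots,M_0+1\}$, so $r_t\in\mathcal B_{p_0}$ and $t\ge n-M_0$. On $H_t$ we have $|\hat p_{t-1}-p|\ge |p-r_t|-1/t$. I would split into two cases.

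\begin{itemize}
\item[(a)] If $|p-r_t|\ge 2/t$, then $|\hat p_{t-1}-p|\ge |p-r_t|/2$. Hoeffding at time $t-1$ then gives the bound $2e^{-(t-1)(p-r_t)^2/2}$. For $n\ge 2M_0+4$ we have $t-1\ge n/2$, so this is at most $2e^{-n(p-r_t)^2/4}$.
\item[(b)] If $|p-r_t|<2/t$, then $n(p-r_t)^2\le 4n/t^2$, which is bounded by a constant depending only on $p_0$ (again because $t\ge n/2$ for large $n$). The trivial bound $\mathbb P_p[H_t]\le 1\le e^{c_2 K}e^{-c_2 n(p-r_t)^2}$ then suffices, with $K:=\sup_n 4n/t^2$.
\end{itemize}

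Small $n$ (below $2M_0+4$) cost nothing: $(p-r_t)^2\le 1$, so $e^{-c_2n(p-r_t)^2}\ge e^{-c_2(2M_0+4)}$, and enlarging $C_2(p_0)$ covers these finitely many horizons.

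The statement leaves $t-1=0$ unaddressed; when $n-M_0\le 1$ we have $n\le M_0+1$, so this falls under the small-$n$ case, and $\mathbb P\le 1$ is absorbed into $C_2(p_0)$ as above.

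The only genuine obstacle is the \emph{uniformity} of the constants in $n$, $p\in[p_0,1)$ and $t\in T_n$. That uniformity follows from $t\ge n-M_0$ together with the fact that the offset $1/t$ is of order $1/n$, which is dominated by the scale $n^{-1/2}$ at which the exponential bound becomes nontrivial.
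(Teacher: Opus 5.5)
Your proposal is correct and follows the paper's proof essentially step for step. For $E_n^{\delta_0}(p)$, both use Hoeffding with a union bound; the paper sums a geometric series, whereas you count at most $n/2$ times and absorb the factor $n$. For $H_t$, both use the reverse triangle inequality and then split into two cases: Hoeffding at time $t-1$ when $|p-r_t|$ dominates the $1/t$ offset (the paper's threshold is $4/n$, yours is $2/t$), and the trivial bound otherwise, with small $n$ absorbed into $C_2(p_0)$.
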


\begin{proof}
By Hoeffding's inequality, for every $t\in\{1,\dots,n\}$ and $p\in(0,1)$,
$$
\mathbb P_p\big[|\hat p_t-p|>\delta_0\big]\le 2e^{-2t\delta_0^2}.
$$
A union bound over $t\in\{h_n,\dots,n\}$ yields
$$
\mathbb P_p[(E_n^{\delta_0}(p))^c]
\le
2\sum_{t=h_n}^{\infty} e^{-2t\delta_0^2}
=
\frac{2e^{-2h_n\delta_0^2}}{1-e^{-2\delta_0^2}}
\le
C_1(\delta_0)e^{-c_1(\delta_0)n},
$$
for positive constants $C_1(\delta_0),c_1(\delta_0)$. This establishes the result for~$\mathbb P_p[(E_n^{\delta_0}(p))^c]$.

We then turn to~$\mathbb P_p[H_t]$. Assume that~$n\ge 2(M_0+1)$ and fix $t\in T_n$. On~$H_t$, we have
\[
|\hat p_{t-1}-p|
\ge |p-r_t|-|\hat p_{t-1}-r_t|
\geq 
|p-r_t|-\frac{1}{t},
\]
hence
\[
H_t
\subseteq
\Big\{|\hat p_{t-1}-p|
\geq
 (|p-r_t|-1/t)_+\Big\}
,
\]
with~$(x)_+:=\max\{x,0\}$. Therefore, by Hoeffding's inequality,
\begin{equation}
\label{eq:Ht_hoeffding}
\mathbb P_p[H_t]
\le
2\exp\!\big(\!-2(t-1)(|p-r_t|-1/t)_+^2\big).
\end{equation}

We now distinguish two cases.

\smallskip
\noindent\emph{Case (a): $|p-r_t|\ge 4/n$.}
Since $n\ge 2(M_0+1)$ and $t\in T_n$, we have $t\ge n-M_0\ge n/2$, hence $1/t\le 2/n$.
Thus, in this case,
\[
|p-r_t|-\frac{1}{t}
\ge
|p-r_t|-\frac{2}{n}
\ge
\frac{|p-r_t|}{2},
\]
so that \eqref{eq:Ht_hoeffding} yields
\[
\mathbb P_p[H_t]
\le
2\exp\!\Big(\!-\frac{(t-1)|p-r_t|^2}{2}\Big).
\]
Since $t-1\ge n-M_0-1\ge n/2$ for $n\ge 2(M_0+1)$, we obtain
\begin{equation}
\label{eq:Ht_case_a}
\mathbb P_p[H_t]
\le
2\exp\!\Big(\!-\frac{n|p-r_t|^2}{4}\Big).
\end{equation}

\smallskip
\noindent\emph{Case (b): $|p-r_t|<4/n$.}
In this case, we use the trivial bound $\mathbb P_p[H_t]\le 1$ together with
\[
\exp\!\Big(\!-\frac{n|p-r_t|^2}{4}\Big)
\ge
\exp\!\Big(\!-\frac{4}{n}\Big)
\ge e^{-2}
,
\]
which gives
\begin{equation}
\label{eq:Ht_case_b}
\mathbb P_p[H_t]
\le
e^{2}\exp\!\Big(\!-\frac{n|p-r_t|^2}{4}\Big).
\end{equation}

Combining \eqref{eq:Ht_case_a} and \eqref{eq:Ht_case_b}, we conclude that for all $n\ge 2(M_0+1)$ and all $t\in T_n$,
\[
\mathbb P_p[H_t]
\le
C_2(p_0) e^{-c_2(p_0)n(p-r_t)^2}
,
\]
with~$C_2(p_0):=e^2$ and~$c_2(p_0):=\frac14$.
This establishes the result since the bound extends to the finitely many values $n\in\{2,\ldots,2M_0+1\}$ after possibly increasing~$C_2(p_0)$.
\end{proof}


\subsection{Proof of Theorem~\ref{thm:finite_oracle_ineq_p0}}
\label{secProofsUpperBound}

\begin{proof}[Proof of Theorem~\ref{thm:finite_oracle_ineq_p0}]
(i) Fix $p_0\in(0,1)$. Note that~$p_0\in[\frac{1}{M_0+1},\frac{1}{M_0})$.
Define the deterministic margin
\begin{equation}
\label{deltap0Defn}
\delta_0
:=
\Bigg\{ 
\begin{array}{ll}
\frac{1}{3M_0(M_0+1)}
& 
\textrm{if } p_0< \frac{1}{2}
\\[2mm]
\frac{1}{12}
& 
\textrm{if } p_0\geq \frac{1}{2}.
\end{array}
\end{equation}
For~$p_0< \frac{1}{2}$, the cardinality of~$\mathcal B_{p_0}$ is at least~$2$, and we have
$$
\delta_0
=
\frac13\min\bigl\{|q-q'|:\, q,q'\in\mathcal B_{p_0},\, q\neq q'\bigr\}
=
\frac13
\bigg( 
\frac{1}{M_0}
-
\frac{1}{M_0+1}
\bigg)
.
$$
For~$p_0\ge \frac12$, we have~$M_0=1$ and~$\mathcal B_{p_0}=\{\frac12\}$, so that~$\Delta_p=|p-\frac12|$ for all $p\in[p_0,1)$. The proof below still applies with the choice
$\delta_0=\frac{1}{12}$, and the uniqueness arguments involving the closest boundary point are immediate since
$\mathcal B_{p_0}$ is a singleton. Hence, it suffices to treat the case $p_0<\frac12$ in the remainder of the proof.

The proof decomposes into five steps. Throughout, we will assume that~$n \geq 2(M_0+3)$ (this is without any loss of generality, since the case with smaller values of~$n$ can be covered by absorbing constants, as we just did in the proof of Lemma~\ref{lem:Etails}).

\paragraph*{Step 1: the bound in~(\ref{eq:p0_pointwise_distance}) holds for~$p\in\mathcal B_{p_0}$}
Fix~$p\in\mathcal B_{p_0}$, so that~$p=1/(m+1)$ for some $m\in\{1,\ldots,M_0\}$ and~$\Delta_p=0$. Since~$n\geq M_0\geq m=m(p)$, the  \mbox{$p$-oracle} rule stops at the first success (if any) from~$s_n(p)=n-m+1$ onwards; see~(\ref{eq:sn_homo}). As shown in the proof of Proposition~\ref{prop:no_p_equality_n_ge_6}(iii), the win probability~$V_n(p)$ of the \mbox{$p$-oracle} rule is the same as the win probability of the rule stopping on the first success (if any) from~$\tilde{s}=n-m$ onwards.   

We now compare on $E_n^{\delta_0}(p)$ (see the definition in Lemma~\ref{lem:Etails}) the win probability of the plug-in strategy to that of the \mbox{$p$-oracle} rule. Recall first that the plug-in rule cannot stop before~$h_n:=\lceil \tfrac n2\rceil+1$. For~$t\geq h_n$, we have on $E_n^{\delta_0}(p)$ that
$$
\hat p_t 
\ge 
p-\delta_0
\ge
\frac{1}{M_0+1} - \delta_0
>
\frac{1}{M_0+2}
.
$$
Therefore, for any~$t\in \{h_n,h_n+1,\ldots,n-M_0-1\}$, 
we have
$$
\hat p_t
>
\frac{1}{M_0+2}
\geq
\frac{1}{n-t+1}
$$
on $E_n^{\delta_0}(p)$, so that the plug-in rule cannot stop before~$n-M_0$ on $E_n^{\delta_0}(p)$. 
Now, for~$t\in \{n-M_0,n-M_0+1,\ldots,n-1\}$, we have on~$E_n^{\delta_0}(p)$
\begin{equation}
\label{tot}
\frac{1}{m+2}
<
 p-\delta_0
\leq
\hat p_t
\leq 
 p+\delta_0
<
\frac{1}{m}
,
\end{equation}
so that
$$
\hat p_t
<
\frac{1}{n-t+1}
$$
always holds if~$t\geq n-m+1$, but will also hold at~$t=n-m$ if~$\hat{p}_t<1/(m+1)$, which may be the case under~(\ref{tot}). On~$E_n^{\delta_0}(p)$, we thus have that, depending on the value of~$\hat{p}_t$, the plug-in rule stops on the first success (if any) from~$n-m+1$ onwards or from~$n-m$ onwards, hence coincides with one of the two optimal \mbox{$p$-oracle} rules above. Consequently, 
$
\mathbb P_p[\textrm{plug-in wins}| E_n^{\delta_0}(p)]=V_n(p),
$
and it follows that
\begin{eqnarray*}
W_n(p)
\!\!&\!\! = \!\! &\!\!
\mathbb P_p[\text{plug-in wins}| E_n^{\delta_0}(p)]
\mathbb P_p[E_n^{\delta_0}(p)]
+
\mathbb P_p[\text{plug-in wins}| (E_n^{\delta_0}(p))^c]
\mathbb P_p[(E_n^{\delta_0}(p))^c]
\\[2mm]
\!\!&\!\! \geq \!\! &\!\!
V_n(p) \mathbb P_p[E_n^{\delta_0}(p)]
\\[2mm]
\!\!&\!\! \geq \!\! &\!\!
V_n(p)-\mathbb P_p[(E_n^{\delta_0}(p))^c]
.
\end{eqnarray*}
Therefore, Lemma~\ref{lem:Etails} shows that
$$
V_n(p)-W_n(p)\le C(p_0)e^{-c(p_0)n}
$$
for some positive constants~$C(p_0)$, $c(p_0)$. This establishes~(\ref{eq:p0_pointwise_distance}) for~$p\in\mathcal B_{p_0}$, so that we may restrict in the rest of the proof to the case~$p\notin\mathcal B_{p_0}$ (for which~$\Delta_p>0$).


\paragraph*{Step 2: quantify the loss on $E_n^{\delta_0}(p)$ for a general~$p$}
Fix $p\in[p_0,1)$ and let $m:=m(p)=\lceil \tfrac 1p\rceil-1$. Then, $m\le M_0$.
For $r\in\{0,1,\dots,n-1\}$, let
\begin{equation}
\label{eq:Vr_closed}
V_n^{(r)}(p)=(r+1)p(1-p)^r
\end{equation}
be the win probability of the deterministic threshold rule that stops at the first success (if any) in $\{n-r,\dots,n\}$. In particular, the \mbox{$p$-oracle} rule corresponds to $r=m-1$ (which provides the threshold time~$s_n(p)=n-m+1$), so that
$$
V_n(p)=V_n^{(m-1)}(p)=mp(1-p)^{m-1}
.
$$

Using \eqref{eq:Vr_closed} and the fact that~$x(1-x)^k\le \tfrac 1{k+1}$ for~$k\geq 0$ and~$x\in(0,1)$, we obtain
\begin{equation}
\label{eq:loss_plus_exact}
V_n^{(m-1)}(p)-V_n^{(m)}(p)
=
(m+1)p(1-p)^{m-1}
\!
\Bigl(p-\frac{1}{m+1}\Bigr)
\leq
2 \Bigl(p-\frac{1}{m+1}\Bigr)
\ \
 (m\ge 1)
\end{equation}
and
\begin{equation}
\label{eq:loss_minus_exact}
\hspace{-8mm}
V_n^{(m-1)}(p)-V_n^{(m-2)}(p)
=
mp(1-p)^{m-2}\!
\Bigl(\frac{1}{m}-p\Bigr)
\leq
2 \Bigl(\frac{1}{m}-p\Bigr)
\ \
 (m\ge 2)
.
\end{equation}

For $t\ge h_n$, define the indicators
\begin{equation}
\label{defI}
\mathbb I_t:=\mathbb I\bigg[\hat p_t < \frac{1}{n-t+1}\bigg]
.
\end{equation}
The plug-in rule stops at the first time $t\in \{h_n,\ldots,n-1\}$ at which~$X_t=1$ and $\mathbb I_t=1$ if there is such a~$t$, and otherwise stops at~$n$ if~$X_n=1$.
In particular, on any sample path for which there exists $s\in\{h_n,\dots,n\}$ such that
\[
\mathbb I_t=0 \ \text{for all }h_n\le t<s
\quad
\textrm{ and }
\quad
\mathbb I_t=1 \ \text{for all }t\ge s,
\]
the plug-in rule coincides with the deterministic threshold rule that stops at the first success (if any) from~$s$ onwards.
Using the same argument as in Step~1, the plug-in rule cannot stop before~$n-M_0$ on~$E_n^{\delta_0}(p)$. For~$t\in \{n-M_0,n-M_0+1,\ldots,n-1\}$, we have on~$E_n^{\delta_0}(p)$
\begin{equation}
\label{totgeneralp}
\frac{1}{m+2}
<
 p-\delta_0
\leq
\hat p_t
\leq 
 p+\delta_0
<
\bigg\{
\begin{array}{cl}
\frac{1}{m-1} & \textrm{if } m\geq 2 \\[1mm]
1 & \textrm{if } m=1
\end{array}
\end{equation}
(compare~(\ref{totgeneralp}) with~(\ref{tot}), that holds for boundary values of~$p$ only). Since~$t\mapsto 1/(n-t+1)$ is strictly increasing in~$t$, the same argument as in Step~1 allows us to conclude that, depending on the value of~$\hat{p}_t$, the plug-in rule stops on the first success (if any) (i) from~$n-m$, (ii) from~$s_n(p)=n-m+1$, or (for~$m\geq 2$:) (iii) from~$n-m+2$  onwards. Its deficit in terms of win probability compared to the optimal \mbox{$p$-oracle} rule is therefore~$V_n^{(m-1)}(p)-V_n^{(m)}(p)$ in case~(i), zero in case~(ii), and (for~$m\geq 2$:)~$V_n^{(m-1)}(p)-V_n^{(m-2)}(p)$ in case~(iii). When it is positive, this deficit can be thus controlled by~(\ref{eq:loss_plus_exact})--(\ref{eq:loss_minus_exact}). 
\vspace{2mm}


\paragraph*{Step 3: predictable switch time and conditioning}
Define the random switch time
$$
S
:=
\inf
\!
\big\{
\inf\{t\in T_n
=\{n-M_0,\dots,n-1\}
: \mathbb I_t=1\}
,n
\big\}
,
$$
where~$\mathbb I_t$ was defined in~(\ref{defI}) and $\inf \varnothing=+\infty$. On~$E_n^{\delta_0}(p)$, Step~2 ensures that~$\mathbb I_t=1$ for all~$t\geq S$, that the plug-in rule coincides pathwise with the deterministic threshold rule that stops on the first success (if any) from~$S$ onwards, and that the possible values of~$S$ on~$\{S\leq n-1\}$ are~$n-m$, $n-m+1$, and~(for~$m\geq 2$:) $n-m+2$.

For $t\ge 2$, note the elementary bound
\begin{equation}\label{eq:hatp_increment}
|\hat p_t-\hat p_{t-1}|
=
\bigg|\frac{S_{t-1}+X_t}{t}-\frac{S_{t-1}}{t-1}\bigg|
\le 
\frac{1}{t}
\end{equation}
and, with the events~$H_t$ introduced in Lemma~\ref{lem:Etails},  let
\begin{equation} 
\label{EpredDefn}
\Epred
:=
\bigcap_{t\in T_n}\big(\{S=t\}^c\cup H_t^c\big)
=
\{S=n\}\ \cup\ \bigcup_{t\in T_n}\big(\{S=t\}\cap H_t^c\big)
.
\end{equation}
On \(\Epred\), if \(S=t (\in T_n)\) then the sign of \(\hat p_t-1/(n-t+1)\) cannot change when revealing~\(X_t\) (by \eqref{eq:hatp_increment}), hence
\[
\mathbb I_t=\mathbb I\!\left[\hat p_{t-1}<\frac{1}{n-t+1}\right]
\qquad\text{on }\Epred\cap\{S=t\}
\]
(compare with~(\ref{defI})).
Therefore, 
$\Epred\cap\{S=t\}=\{S=t\}\cap H_t^{c}\in\mathcal F_{t-1}$ for all $t\in T_n$.
Since $S\in T_n\cup\{n\}$ by definition, we also have $\Epred\cap\{S=n\}=\{S=n\}\in\mathcal F_{n-1}$.
On~$\Epred$, the switch time $S$ is \emph{predictable} in the sense that for all $t\in T_n\cup\{n\}$, the event $\{S=t\}$ is $\mathcal F_{t-1}$-measurable relative to~$\Epred$.
Consequently, on $\Epred\cap\{S=t\}$ the variable $X_t$ is independent of $\mathcal F_{t-1}$ and is ${\rm Bernoulli}(p)$ (in other words, $X_S$ is independent of~$\mathcal F_{S-1}$ with $X_S\sim{\rm Bernoulli}(p)$).

Condition then on $\mathcal F_{S-1}$ and split into $X_S=1$ and $X_S=0$. On~$E_n^{\delta_0}(p)\cap \Epred$, if $X_S=1$, then the plug-in rule stops at~$S$ and wins if and only if $X_{S+1}=\cdots=X_n=0$ on~$\{S\leq n-1\}$ (whereas it then always wins on~$\{S= n\}$).
If $X_S=0$, then the rule stops at the first success (if any) in $\{S+1,\dots,n\}$ and wins if and only if there is exactly one success in~$\{S+1,\dots,n\}$ on~$\{S\leq n-1\}$ (whereas the plug-in rule then always loses on~$\{S=n\}$). 
Write $D_n:=\{\text{plug-in wins}\}$.
Since $X_{S+1},\dots,X_n$ are independent of $\mathcal F_S$ (hence of $\mathcal F_{S-1}$) and i.i.d.\
$\mathrm{Bernoulli}(p)$, we obtain that, on $E_n^{\delta_0}(p)\cap \Epred\cap \{S\leq n-1\}$,
\begin{eqnarray*}
\mathbb P_p[D_n| \mathcal F_{S-1}]
\!\!&\!\! = \!\! &\!\!
\mathbb P_p[X_{S+1}=\cdots=X_n=0]
\mathbb P_p[X_S=1| \mathcal F_{S-1}]
\nonumber
\\[1mm]
& &
\hspace{8mm}
+
\mathbb P_p\big[ \textstyle\sum_{t=S+1}^n X_t=1\big]
\mathbb P_p[X_S=0| \mathcal F_{S-1}]
\nonumber
\\[1mm]
\!\!&\!\! = \!\! &\!\!
p(1-p)^{n-S}
+(1-p)(n-S)p(1-p)^{n-S-1}
\nonumber
\\[1mm]
\!\!&\!\! = \!\! &\!\!
(n-S+1)p(1-p)^{n-S}
\nonumber
\\[1mm]
\!\!&\!\! = \!\! &\!\!
V_n^{(n-S)}(p)
,
\end{eqnarray*}
where the last equality uses \eqref{eq:Vr_closed}, whereas on~$E_n^{\delta_0}(p)\cap \Epred\cap \{S= n\}$, we have 
\begin{eqnarray*}
\lefteqn{
\hspace{-10mm}
\mathbb P_p[D_n| \mathcal F_{S-1}]
=
1
\times
\mathbb P_p[X_S=1| \mathcal F_{S-1}]
+
0
\times
\mathbb P_p[X_S=0| \mathcal F_{S-1}]
\nonumber
}
\\[1mm]
& &
\hspace{22mm}
=
p
=
(n-S+1)p(1-p)^{n-S}
=
V_n^{(n-S)}(p)
.
\end{eqnarray*}
Thus, we always have $\mathbb P_p[D_n| \mathcal F_{S-1}]=V_n^{(n-S)}(p)$ on~$E_n^{\delta_0}(p)\cap \Epred$.
\vspace{2mm}


\paragraph*{Step 4: comparing conditional and unconditional win probabilities}
Fix $t\in T_n$ and work on the event $E_n^{\delta_0}(p)\cap\{S=t\}$.
On $E_n^{\delta_0}(p)$, Step~2 implies that the plug-in rule coincides pathwise with the deterministic threshold rule
that stops on the first success (if any) from time~$S$ onwards. In particular, on $\{S=t\}$, it behaves from time~$t$
onwards like the deterministic threshold rule with parameter $r:=n-t$ (see the discussion around~\eqref{eq:Vr_closed}).

Let
$
\lambda_t:=(1-p)^{n-t}
$
and
$\mu_t:=(n-t)p(1-p)^{n-t-1}$.
If one were to reveal $X_t$ at time~$t-1$ and then apply the deterministic threshold rule from time $t$ onwards, then the conditional win probability would be equal to~$\lambda_t$ when $X_t=1$ and $\mu_t$ when~$X_t=0$.
Consequently, on~$\{S=t\}$, we have
$$
\mathbb P_p[D_n| \mathcal F_{S-1}]
=
q_t\lambda_t+(1-q_t)\mu_t,
\qquad
q_t:=\mathbb P_p[X_t=1| \mathcal F_{t-1},S=t],
$$
while the unconditional win probability of the deterministic threshold rule from time $t$ onwards is
\[
V_n^{(n-t)}(p)=p\lambda_t+(1-p)\mu_t.
\]
Subtracting the last two displays yields
\begin{equation}
\label{eq:cond_vs_uncond_threshold}
\mathbb P_p[D_n| \mathcal F_{S-1}]
-
V_n^{(n-t)}(p)
=
(q_t-p)\,(\lambda_t-\mu_t)
\qquad\text{on }\{S=t\}.
\end{equation}
Moreover, using the notation~$r_t:=1/(n-t+1)$ introduced in Lemma~\ref{lem:Etails}, we have
\[
\lambda_t-\mu_t
=
(1-(n-t+1)p)
(1-p)^{n-t-1}
=
(n-t+1)
(r_t-p)
(1-p)^{n-t-1}
.
\]
Using the elementary bound (note that~$t\in T_n$ implies~$n-t\leq M_0$)
\[
(n-t+1)(1-p)^{n-t-1}
\le
n-t+1
\le
M_0+1
\le
\frac{1}{p_0}+1
\le
\frac{2}{p_0},
\]
we thus obtain
\begin{equation}
\label{eq:atbt_bound}
|\lambda_t-\mu_t|
\le
\frac{2}{p_0}\,|p-r_t|.
\end{equation}
Since $|q_t-p|\le 1$, combining \eqref{eq:cond_vs_uncond_threshold} and \eqref{eq:atbt_bound} gives the bound
\begin{equation}
\label{eq:cond_threshold_bias_bound}
|
\mathbb P_p[D_n| \mathcal F_{S-1}]
-
V_n^{(n-t)}(p)
|
\le
\frac{2}{p_0}\,|p-r_t|
\qquad\text{on }\{S=t\}.
\end{equation}
\vspace{-2mm}


\paragraph*{Step 5: conclude}
Since $\mathbb P_p[D_n|\mathcal F_{S-1}]\geq 0$ almost surely, the tower property provides
\begin{equation}
\label{eq:restrict_event}
W_n(p)
=
\mathbb E_p
[\mathbb P_p[D_n|\mathcal F_{S-1}]
]
\ge
\mathbb E_p[
\mathbb I[E_n^{\delta_0}(p)] \mathbb P_p[D_n|\mathcal F_{S-1}]
]
.
\end{equation}
Using~$V_n(p)\leq 1$ and Lemma~\ref{lem:Etails}, this yields
\begin{eqnarray}
V_n(p)-W_n(p)
\!\!&\!\! \leq \!\! &\!\!
V_n(p)
\mathbb P_p[(E_n^{\delta_0}(p))^c]
+
\mathbb E_p
[
\mathbb I[E_n^{\delta_0}(p)]
(V_n(p)-\mathbb P_p[D_n|\mathcal F_{S-1}])
]
\nonumber
\\[2mm]
\!\!&\!\! \leq \!\! &\!\!
C(p_0)e^{-c(p_0)n}
+
F_n(p)
+
G_n(p)
,
\label{conclude1}
\end{eqnarray}
for some positive constants~$C(p_0),c(p_0)$, where we let
$$
F_n(p)
:=
\mathbb E_p
[
\mathbb I[E_n^{\delta_0}(p)\cap \Epred]
(V_n(p)-\mathbb P_p[D_n|\mathcal F_{S-1}])
]
,
$$
and
$$
G_n(p)
:=
\mathbb E_p
[
\mathbb I[E_n^{\delta_0}(p)\cap(\Epred)^c]
(V_n(p)-\mathbb P_p[D_n|\mathcal F_{S-1}])
]
.
$$
To conclude the proof of~\eqref{eq:p0_pointwise_distance}, we therefore need to upper-bound~$F_n(p)$ and~$G_n(p)$.
 \vspace{2mm}


\paragraph*{Upper-bound on $F_n(p)$}
From Step~3, 
$\mathbb P_p[D_n|\mathcal F_{S-1}]
=
V_n^{(n-S)}(p)$ on~$E_n^{\delta_0}(p)\cap \Epred$, so that we have
$$
F_n(p)
=
\mathbb E_p
[
\mathbb I[E_n^{\delta_0}(p)\cap \Epred]
(V_n(p)-V_n^{(n-S)}(p))
]
.
$$
Now, on~$E_n^{\delta_0}(p)$, Step~2 implies that~$n-S\in\{m-2,m-1,m\}$ (with the convention that \mbox{$m-2$} is absent when $m=1$), so that the resulting deficit with respect to the oracle win probability~$V_n(p)=V_n^{(m-1)}(p)$ is 
\begin{eqnarray*}
\lefteqn{
\hspace{-0mm}
V_n(p)-V_n^{(n-S)}(p)
=
(V_n^{(m-1)}(p)-V_n^{(m)}(p))
\mathbb I[S=n-m]
}
\\[2mm]
& & 
\hspace{33mm}
+
(V_n^{(m-1)}(p)-V_n^{(m-2)}(p))
\mathbb I[S=n-m+2]
\mathbb I[m\ge 2]
.
\end{eqnarray*}
Therefore, using~(\ref{eq:loss_plus_exact})--(\ref{eq:loss_minus_exact}), we obtain 
\begin{eqnarray*}
F_n(p)
\!\!&\!\! \leq \!\! &\!\!
2\Big(p-\frac{1}{m+1}\Big)
\mathbb E_p[\mathbb I[E_n^{\delta_0}(p)\cap \Epred] \mathbb I[S=n-m]] 
\\[2mm]
& & 
\hspace{10mm}
+
2\mathbb I[m\geq 2]
\Bigl(\frac{1}{m}-p\Bigr)
\mathbb E_p[\mathbb I[E_n^{\delta_0}(p)\cap \Epred] \mathbb I[S=n-m+2]] 
.
\end{eqnarray*}

On $E_n^{\delta_0}(p)$, the event $\{S=n-m\}$ can only happen if at time $t=n-m$ we already have~$\hat p_t<1/(m+1)$, and the event $\{S=n-m+2\}$ (when $m\ge 2$) can only happen if at time $t=n-m+1$ we still have
$\hat p_t\ge 1/m$. Therefore, by Hoeffding's inequality,
$$
\mathbb P_p[S=n-m]
\le
\mathbb P_p\Big[\hat p_{n-m}-p< -\Big(p-\frac{1}{m+1}\Big)\Big]
\le
\exp\!\big(\!\!-2(n-m)d_{p+}^2\big)
$$
and, for $m\ge 2$,
$$
\mathbb P_p[S=n-m+2]
\le
\mathbb P_p\Big[\hat p_{n-m+1}-p\ge \frac{1}{m}-p\Big]
\le
\exp\!\big(\!\!-2(n-m+1)d_{p-}^2\big)
,
$$
where we let
\[
d_{p+}:=p-\frac{1}{m+1}> 0
\quad
\textrm{ and }
\ \
(\textrm{for }m\ge 2\textrm{:})
\ \
d_{p-}:=\frac{1}{m}-p>0
.
\]
Therefore,
\begin{equation}
F_n(p)
\leq 
2d_{p+}
\exp\!\big(\!\!-2(n-m)d_{p+}^2\big)
\nonumber
+
2 \mathbb I[m\geq 2]
d_{p-}
\exp\!\big(\!\!-2(n-m+1)d_{p-}^2\big)
.
\label{tsppp2}
\end{equation}

If $m=1$, then the $d_{p-}$-term is absent and the $d_{p+}$-term is of the expected form since~$d_{p+}=\Delta_p$ and~$n-m\geq n-M_0\geq n/2$. 
If $m\ge 2$, then $\Delta_p=\min\{d_{p+},d_{p-}\}$.
If $\Delta_p=d_{p+}$, then, with the constant 
$
\delta_0
=
\delta_0(p_0)
=
\frac13\min\{|q-q'|:\, q,q'\in\mathcal B_{p_0},\, q\neq q'\}$ from~(\ref{deltap0Defn}), 
  we have
$$
\delta_0
\leq
\frac{1}{2}
\Big(\frac{1}{m}-\frac{1}{m+1}\Big)
\leq 
d_{p-}
\leq 
1
,
$$
so that the $d_{p-}$-term is bounded by $C(p_0)e^{-c(p_0)n}$ and can be absorbed into the exponential-in-$n$ remainder term. Similarly, if $\Delta_p=d_{p-}$, then $\delta_0\leq d_{p+}\leq 1$, so the $d_{p+}$-term is absorbed into $C(p_0)e^{-c(p_0)n}$. In all cases, we thus have
\begin{equation}
\label{conclude2}
F_n(p)
\le
C_1(p_0)
\Delta_p
e^{-c(p_0)n\Delta_p^2}
+
C_2(p_0)
e^{-c(p_0)n}
\end{equation}
after renaming constants.
\vspace{2mm}


\paragraph*{Upper-bound on $G_n(p)$}
From~(\ref{EpredDefn}), we obtain
\[
(\Epred)^c
=
\bigcup_{t\in T_n}\big(\{S=t\}\cap H_t\big)
.
\]
Let
$
\Delta
:=
|V_n(p)-\mathbb P_p[D_n|\mathcal F_{S-1}]|$. Note that $0\le \Delta\le 1$ almost surely.
Then,
\begin{eqnarray}
G_n(p)
\!\!&\!\! = \!\! &\!\!
\mathbb E_p[
\mathbb I[E_n^{\delta_0}(p)\cap(\Epred)^c]
(V_n(p)-\mathbb P_p[D_n|\mathcal F_{S-1}])
]
\nonumber
\\[2mm]
\!\!&\!\! \leq \!\! &\!\!
\mathbb E_p[
\mathbb I[E_n^{\delta_0}(p)\cap(\Epred)^c]
\Delta
]
\nonumber
\\[1mm]
\!\!&\!\! \leq \!\! &\!\!
\sum_{t\in T_n}
\mathbb E_p[
\mathbb I[E_n^{\delta_0}(p)\cap\{S=t\}\cap H_t]
\Delta
].
\label{eq:Gn_union_bound_EpredS}
\end{eqnarray}

Note that $(\Epred)^c\subseteq\{S\in T_n\}$, so that the case $\{S=n\}$ does not contribute to $G_n(p)$. On~$E_n^{\delta_0}(p)\cap\{S=t\}$ with $t\in T_n$, Step~2 implies that
$t\in \{n-m,\ n-m+1,\ n-m+2\}\cap T_n$ (with $n-m+2$ only relevant when $m\ge2$). In particular, $n-t\in\{m,\ m-1,\ m-2\}$, and by \eqref{eq:loss_plus_exact}--\eqref{eq:loss_minus_exact} we have the deterministic bound
\begin{equation}
\label{eq:oracle_vs_threshold_bound_rt}
0\le V_n(p)-V_n^{(n-t)}(p)\le 2|p-r_t|
\qquad\text{on }E_n^{\delta_0}(p)\cap\{S=t\}
\end{equation}
(when $n-t=m-2$, we used that $|p-\tfrac 1m|\le |p-\tfrac 1{m-1}|=|p-r_t|$ since $p\le \tfrac 1m$).

Putting \eqref{eq:cond_threshold_bias_bound} and \eqref{eq:oracle_vs_threshold_bound_rt} together, we obtain on
$E_n^{\delta_0}(p)\cap\{S=t\}$,
\[
0\le
\Delta
\le
\bigl(V_n(p)-V_n^{(n-t)}(p)\bigr)
+
\big|V_n^{(n-t)}(p)-\mathbb P_p[D_n| \mathcal F_{S-1}]\big|
\le
\Bigl(2+\frac{2}{p_0}\Bigr)|p-r_t|.
\]
Inserting this into \eqref{eq:Gn_union_bound_EpredS} yields
$$
G_n(p)
\le
\Bigl(2+\frac{2}{p_0}\Bigr)
\sum_{t\in T_n}
|p-r_t|
\mathbb P_p[E_n^{\delta_0}(p)\cap\{S=t\}\cap H_t]
.
$$
Since Lemma~\ref{lem:Etails} entails that, for every~$t\in T_n$,
\[
\mathbb P_p[E_n^{\delta_0}(p)\cap\{S=t\}\cap H_t]
\leq
\mathbb P_p[H_t]
\le 
C(p_0)
e^{-c(p_0)n(p-r_t)^2}
\]
for some positive constants~$C(p_0),c(p_0)$, we obtain
\begin{equation}
G_n(p)
\leq 
C(p_0) 
\Bigl(2+\frac{2}{p_0}\Bigr)
\sum_{t\in T_n}
|p-r_t|
e^{-c(p_0)n(p-r_t)^2}
.
\label{eq:Bn_weighted_union}
\end{equation}

Working again with the quantity~$
\delta_0$ from~(\ref{deltap0Defn}), we distinguish two cases.

\smallskip
\noindent\emph{Case (a): $\Delta_p\ge \delta_0$.}
Since $r_t\in\mathcal B_{p_0}$, we then have $|p-r_t|\ge \Delta_p\ge \delta_0$ for all $t\in T_n$. Using~$|T_n|\leq M_0$ and~$|p-r_t|\leq 1$, we obtain from~(\ref{eq:Bn_weighted_union}) that
$$
G_n(p)
\le
C(p_0)\,e^{-c(p_0)n}
,
$$
after renaming constants.

\smallskip
\noindent\emph{Case (b): $\Delta_p< \delta_0$.}
Then, the closest boundary point is unique: let $r^\star\in\mathcal B_{p_0}$ be such that $|p-r^\star|=\Delta_p$.
Since the map~$t\mapsto r_t=1/(n-t+1)$ is one-to-one from~$T_n$ to~$\mathcal B_{p_0}$, there is a unique $t^\star\in T_n$ such that $r_{t^\star}=r^\star$.
For $t=t^\star$, we have $|p-r_t|=\Delta_p$, and~(\ref{eq:Bn_weighted_union}) writes
$$
G_n(p)
\leq 
C(p_0) 
\Bigl(2+\frac{2}{p_0}\Bigr)
\Delta_p
e^{-c(p_0)n\Delta_p^2}
+
C(p_0) 
\Bigl(2+\frac{2}{p_0}\Bigr)
\sum_{t\in T_n\setminus\{t^\star\}}
|p-r_t|
e^{-c(p_0)n(p-r_t)^2}
.
$$
By definition of~$\delta_0$, we have~$\delta_0\leq |p-r_t|\leq 1$ for all $t\in T_n\setminus\{t^\star\}$, hence the same argument as in case~(a) yields
$
G_n(p)
\leq 
C_1(p_0) 
\Delta_p
e^{-c(p_0)n\Delta_p^2}
+
C_2(p_0) 
 e^{-c(p_0)n}
$
after renaming constants.

Combining the two cases, we have shown that there exist constants~$C_1(p_0)$, $C_2(p_0)$ and~$c(p_0)$ such that
\begin{equation}
\label{conclude3}
G_n(p)
\leq 
C_1(p_0) 
\Delta_p
e^{-c(p_0)n\Delta_p^2}
+
C_2(p_0) 
 e^{-c(p_0)n}
\end{equation}
for all~$n\geq 2(M_0+3)$ and all~$p\in[p_0,1)\setminus \mathcal B_{p_0}$. Combining~(\ref{conclude1}), (\ref{conclude2}) and~(\ref{conclude3}) establishes the result in~(\ref{eq:p0_pointwise_distance}) for all~$n\geq 2(M_0+3)$ and all~$p\in[p_0,1)\setminus \mathcal B_{p_0}$. Since Step~1 already showed the result for~$p\in \mathcal B_{p_0}$ and since the result extends to smaller values of~$n$ by absorbing constants, this concludes the proof of~(\ref{eq:p0_pointwise_distance}).


(ii) For~$p_0>\frac{1}{2}$, we have~$\inf_{p\in[p_0,1)} \Delta_p>0$, so that~(\ref{eq:p0_uniform_exp_term}) directly follows from~(\ref{eq:p0_pointwise_distance}). For~$p_0\leq \frac{1}{2}$, taking the supremum over $p\in[p_0,1)$ in~(\ref{eq:p0_pointwise_distance}) and using that for every $a>0$,
$$
\sup_{\Delta\ge 0}\ \Delta e^{-a n \Delta^2}
=
\frac{1}{\sqrt{2a en}},
$$
we obtain that
$$
\sup_{p\in[p_0,1)}\bigl(V_n(p)-W_n(p)\bigr)
\le
\frac{C_1(p_0)}{\sqrt{2c(p_0)en}}+C_2(p_0)e^{-c(p_0)n}
.
$$
Since  the exponential term can be absorbed into the $1/\sqrt{n}$ one by enlarging~$C_1(p_0)$, this proves~\eqref{eq:p0_uniform_two_term}.
\end{proof}


\subsection{Proof of Theorem~\ref{thmLowerBound}}
\label{secProofsLowerBound}

The proof of Theorem~\ref{thmLowerBound} requires the following preliminary result.

\begin{lem}
\label{LemmaLowerBound}
Let $\pi$ be a (possibly randomized) \mbox{$p$-blind} rule and denote the corresponding stopping time  as $\tau_n^\pi=\tau_n^\pi(X_1,\ldots,X_n,U)$, where the auxiliary random variable~$U$ is realizing the possible internal randomization. Then,
(i) for any $p\in(\frac{1}{3},\frac{1}{2})$,
$$
V_n(p)-W_n^\pi(p)\ge (1-2p)\mathbb P_p[X_{n-1}=1,\tau_n^\pi\geq n];
$$
(ii) for any $p\in(\frac{1}{2},1)$,
$$
V_n(p)-W_n^\pi(p)\ge (2p-1)\mathbb P_p[X_{n-1}=1,\tau_n^\pi=n-1]
.
$$
\end{lem}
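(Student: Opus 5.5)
The plan is an exchange argument. We modify $\pi$ only at time $n-1$, on the event appearing in each bound, and compare the modified rule with the oracle. For known $p$, the sum-the-odds theorem says $V_n(p)$ is the largest win probability over all stopping rules. Any rule that is admissible for known $p$ therefore wins with probability at most $V_n(p)$, and the deficit of $\pi$ is at least the gain obtained by correcting its action at time $n-1$.

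\emph{Randomization.} We realize the internal randomization of $\pi$ through $U$, independent of $(X_1,\ldots,X_n)$, with a law not depending on $p$, and work with the filtration $\mathcal G_t:=\sigma(X_1,\ldots,X_t,U)$. Conditionally on $U=u$, the rule is a deterministic stopping rule with respect to $(\mathcal F_t)$, so its win probability is at most $V_n(p)$. Integrating over $u$ shows that every $\mathcal G_t$-stopping time has win probability at most $V_n(p)$ (with $n\ge2$, so that $n\ge m(p)$ in both regimes considered).

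\emph{Part (i).} Let $A:=\{X_{n-1}=1,\ \tau_n^\pi\ge n\}$; since $\{\tau_n^\pi\ge n\}=\{\tau_n^\pi\le n-1\}^c$, we have $A\in\mathcal G_{n-1}$. Define $\tau':=n-1$ on $A$ and $\tau':=\tau_n^\pi$ on $A^c$. This is a $\mathcal G_t$-stopping time: it agrees with $\tau_n^\pi$ before $n-1$, and the decision at $n-1$ is $\mathcal G_{n-1}$-measurable. Off $A$ the two rules coincide.
\begin{itemize}
\item On $A$, the rule $\tau'$ stops on a success at $n-1$ and wins if and only if $X_n=0$. Since $X_n$ is independent of $\mathcal G_{n-1}$, this contributes exactly $(1-p)\,\mathbb P_p[A]$.
\item On $A$, the rule $\pi$ either never stops, or stops at $n$ and wins only if $X_n=1$. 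It therefore wins with probability at most $p\,\mathbb P_p[A]$ there.
\end{itemize}
Hence
\[
V_n(p)\ge W_n^{\tau'}(p)\ge W_n^\pi(p)+(1-2p)\,\mathbb P_p[A],
\]
which is (i). The restriction $p\in(\frac13,\frac12)$ only ensures that $1-2p>0$ and that the oracle rule is the last-two rule; the inequality itself needs only optimality of $V_n(p)$.

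\emph{Part (ii).} Let $B:=\{X_{n-1}=1,\ \tau_n^\pi=n-1\}\in\mathcal G_{n-1}$. On $B$, we replace stopping at $n-1$ by continuing and stopping at $n$ if $X_n=1$. On $B$, the original rule wins with probability $1-p$ (it needs $X_n=0$) and the modified rule with probability $p$, again by independence of $X_n$ from $\mathcal G_{n-1}$. The modified rule therefore gains exactly $(2p-1)\,\mathbb P_p[B]$, and comparing it with $V_n(p)$ gives (ii).

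\emph{Main obstacle.} The step needing care is measurability and independence. We must check that the modified times are genuine (randomized) stopping times, so that the oracle bound applies. We also need that $X_n$ is independent of $\mathcal G_{n-1}$ given $U$, since this is what makes the conditional win probabilities exactly $1-p$ and $p$. Finally, in (i) we must treat the case $\tau_n^\pi=+\infty$ on $A$, which only lowers $\pi$'s win probability and so works in our favour.
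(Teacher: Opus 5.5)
Your proof is correct. Part (i) is the paper's argument essentially verbatim: the same switched stopping time on $\{X_{n-1}=1,\tau_n^\pi\ge n\}\in\sigma(X_1,\ldots,X_{n-1},U)$, the same conditional comparison ($1-p$ against at most $p$, by independence of $X_n$), and the same appeal to oracle optimality.

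Part (ii) is where you take a different route. The paper does not build a modified rule there. Instead it uses the explicit value $V_n(p)=p$ for $p>\frac12$ and proves the pathwise bound $\mathbb P_p[\mathcal W^\pi\mid\mathcal F_{n-1}]\le p$ almost surely. On $\{\tau_n^\pi\le n-1\}$ a win needs $X_n=0$, which has probability $1-p\le p$; on $\{\tau_n^\pi\ge n\}$ it needs $X_n=1$. It then keeps only the event $A=\{X_{n-1}=1,\tau_n^\pi=n-1\}$, on which $p-\mathbb P_p[\mathcal W^\pi\mid\mathcal F_{n-1}]=2p-1$.

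Your exchange argument (continue on $A$, then stop at $n$ on a success) gives the same bound and handles both parts uniformly. It needs only the optimality of the oracle, not its closed form. The paper's route instead yields the almost-sure conditional bound as a by-product, which it reuses in Case~(a) of the proof of Theorem~\ref{thmLowerBound}.

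Your reduction of randomized rules to deterministic ones, by conditioning on $U=u$ and integrating, also makes precise a step the paper leaves implicit. The paper invokes oracle optimality for the possibly randomized rule $\tilde\pi$ without comment.
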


\begin{proof}
(i) Fix $p\in(\tfrac13,\tfrac12)$ and let
$
B:=\{X_{n-1}=1,\tau_n^\pi\geq n\}$. Since $\{\tau_n^\pi\geq n\}=\{\tau_n^\pi\le n-1\}^c$, we have $B\in\mathcal F_{n-1}$, where~$\mathcal F_{t}:=\sigma(X_1,\ldots,X_{t},U)$. Consider then the \mbox{$p$-blind} rule $\tilde\pi$ associated with the stopping time
\[
\tilde\tau_n:=
\bigg\{
\begin{array}{ll}
n-1 & \text{if } B \text{ occurs} \\[1mm]
\tau_n^\pi & \text{otherwise.}
\end{array}
\]
Note that~$\tilde\tau_n$ is indeed a stopping time since 
$\{\tilde\tau_n\le t\}=\{\tau_n^\pi\le t\}\in\mathcal F_t$ for $t\le n-2$, and $\{\tilde\tau_n\le n-1\}=\{\tau_n^\pi\le n-1\}\cup B\in\mathcal F_{n-1}$.

Now, let~$\mathcal W^{\tilde\pi}:=\{X_{\tilde\tau_n}=1,\ X_{\tilde\tau_n+1}=\cdots=X_n=0\}$ be the win event of~$\tilde{\pi}$. On~$B$, we have $\mathcal W^\pi\subseteq \{X_n=1\}$ and~$\mathcal W^{\tilde\pi}=\{X_n=0\}$. Since $X_n$ is independent of $\mathcal F_{n-1}$, this yields
\begin{equation}
\label{tutuu1}
\mathbb P_p[\mathcal W^{\tilde\pi}|\mathcal F_{n-1}]
-\mathbb P_p[\mathcal W^\pi|\mathcal F_{n-1}]
\geq  (1-p)-p
= 1-2p
(\geq 0)
\quad\text{on }B.
\end{equation}
On $B^c$, we have~$\tilde\tau_n=\tau_n^\pi$, hence $\mathcal W^{\tilde\pi}=\mathcal W^\pi$, so that
\begin{equation}
\label{tutuu2}
\mathbb P_p[\mathcal W^{\tilde\pi}|\mathcal F_{n-1}]
-\mathbb P_p[\mathcal W^\pi|\mathcal F_{n-1}]
=0
\quad\text{on }B^c.
\end{equation}
From~(\ref{tutuu1})--(\ref{tutuu2}), we have
\[
W_n^{\tilde\pi}(p)-W_n^\pi(p)
=
\mathbb E_p[
(
\mathbb P_p[\mathcal W^{\tilde\pi}|\mathcal F_{n-1}]-\mathbb P_p[\mathcal W^\pi|\mathcal F_{n-1}]
)
\mathbb I[B]
]
\geq
(1-2p)\mathbb P_p[B].
\]
Since the optimality of the \mbox{$p$-oracle} rule implies that~$V_n(p)\ge W_n^{\tilde\pi}(p)$, we conclude that
$$
V_n(p)-W_n^\pi(p)\ge W_n^{\tilde\pi}(p)-W_n^\pi(p)
\geq
(1-2p)\mathbb P_p[B],
$$
which proves~(i).
\vspace{2mm}

(ii) Fix $p\in(\frac{1}{2},1)$ and denote the win event of~$\pi$ by
$
\mathcal W^\pi:=\{X_{\tau_n^\pi}=1,X_{{\tau_n^\pi}+1}=\cdots=X_n=0\}$.
Since $p>\frac{1}{2}$, the oracle win probability is~$V_n(p)=p$, so that
\begin{equation}
\label{LemMin1}
V_n(p)-W_n^\pi(p)
=
p-\mathbb P_p[\mathcal W^\pi]
=
\mathbb E_p[p-\mathbb P_p[\mathcal W^\pi|\mathcal F_{n-1}]].
\end{equation}
We claim that
\begin{equation}
\label{LemMin2}
\mathbb P_p[\mathcal W^\pi|\mathcal F_{n-1}]\le p
\quad
\mathbb P_p\textrm{-almost surely.}
\end{equation}
Indeed, conditional on $\mathcal F_{n-1}$, there are two cases.
If $\tau_n^\pi\le n-1$, then $\mathcal W^\pi\subseteq\{X_n=0\}$, hence on~$\{\tau_n^\pi\le n-1\}$, we have $\mathbb P_p[\mathcal W^\pi|\mathcal F_{n-1}]\le \mathbb P_p[X_n=0]=1-p\le p$. 
If $\tau_n^\pi\geq n$, then $\mathcal W^\pi\subseteq\{X_n=1\}$, so on~$\{\tau_n^\pi\geq n\}$,
we have $\mathbb P_p[\mathcal W^\pi|\mathcal F_{n-1}]\le \mathbb P_p[X_n=1]=p$. This shows~(\ref{LemMin2}).

Let then
$
A:=\{X_{n-1}=1,\tau_n^\pi=n-1\}\in\mathcal F_{n-1}.
$
On $A$, we have that~$\mathcal W^\pi$ occurs if and only if $X_n=0$. Thus,
$\mathbb P_p[\mathcal W^\pi|\mathcal F_{n-1}]=1-p$ on $A$. Using (\ref{LemMin1})--(\ref{LemMin2}), it follows that
\begin{eqnarray*}
V_n(p)-W_n^\pi(p)
&\ge&
\mathbb E_p[(p-\mathbb P_p[\mathcal W^\pi|\mathcal F_{n-1}])\mathbb I[A]]
\\[1mm]
&=&
(p-(1-p))\mathbb P_p[A]
\\[1mm]
&=&
(2p-1)\mathbb P_p[X_{n-1}=1,\tau_n^\pi=n-1]
,
\end{eqnarray*}
which proves (ii).
\end{proof}


\begin{proof}[Proof of Theorem~\ref{thmLowerBound}]
Fix an arbitrary (possibly randomized) \mbox{$p$-blind} rule~$\pi$.
For a fixed~$h>0$, let
$$
p_n^-:=\frac12-\frac{h}{\sqrt n}
\qquad
\textrm{and}
\qquad
p_n^+:=\frac12+\frac{h}{\sqrt n}
.
$$
For all $n$ large enough, we have $p_n^-\in(\max(p_0,\tfrac13),\tfrac12)$ and $p_n^+\in(\frac{1}{2},1)$, so that
Lemma~\ref{LemmaLowerBound}(i)--(ii) apply at~$p_n^-$ and $p_n^+$, respectively.

Let $\mathbb Q_n^\pm$ denote the joint law of $(X_1,\dots,X_{n-1},U)$ under $\mathbb P_{p_n^\pm}$, where~$U$ is the random variable that realizes the possible randomization of~$\pi$. Since $U$ is independent of the $X_t$'s and its distribution does not depend on~$p$, we have
$$
\mathrm{TV}(\mathbb Q_n^+,\mathbb Q_n^-)
=
\mathrm{TV}
(\mathbb P_{n-1}^+,\mathbb P_{n-1}^-
)
,
$$
where~$\mathrm{TV}$ denotes the total variation distance and~$\mathbb P_{n-1}^\pm$ stand for the joint law of $(X_1,\ldots,X_{n-1})$ under $\mathbb P_{p_n^\pm}$. Moreover, denoting as~$\mathrm{KL}(P\|Q)$
the Kullback--Leibler divergence between the probability measures $P,Q$ with $P\ll Q$,
we have
$$
\mathrm{KL}(\mathbb P_{n-1}^+\|\mathbb P_{n-1}^-)
=
(n-1)\mathrm{kl}(p_n^+\|p_n^-)
,
$$
 where
$$
\mathrm{kl}(u\|v)
:=
u\log\Big(\frac{u}{v}\Big)+(1-u)\log\Big(\frac{1-u}{1-v}\Big)
,
\qquad
u,v\in(0,1)
,
$$
is the KL divergence between the ${\rm Bernoulli}(u)$ and ${\rm Bernoulli}(v)$ distributions.

Now, there exists an absolute constant~$C>0$ such that, for any $u,v\in[\tfrac{1}{4},\tfrac{3}{4}]$, 
$$
\mathrm{kl}(u\|v)
\le 
C(u-v)^2.
$$
Indeed, for fixed $u$, the map $g_u(v):=\mathrm{kl}(u\|v)$ satisfies $g_u(u)=0$, $g_u'(u)=0$, and
$$
0
\leq
g_u''(v)=\frac{u}{v^2}+\frac{1-u}{(1-v)^2}
\leq
C
,
\qquad v\in [\tfrac{1}{4},\tfrac{3}{4}]
,
$$
for some absolute constant~$C$ (in the rest of the proof, the constant~$C$ may change from line to line). By Taylor's theorem with remainder, we thus have
$$
\mathrm{kl}(u\|v)=g_u(v)\le \frac12
(u-v)^2
\sup_{t\in\big[\tfrac{1}{4},\tfrac{3}{4}\big]} 
|g_u''(t)|
\le 
C
(u-v)^2.
$$

For $n$ large enough, we have $p_n^\pm\in[\tfrac14,\tfrac34]$, hence
$$
\mathrm{KL}(\mathbb P_{n-1}^+\|\mathbb P_{n-1}^-)
\le 
C(n-1)\frac{(2h)^2}{n}
\le 
C
h^2.
$$
By Pinsker's inequality, we conclude that
there exists an absolute constant $C_{\rm TV}>0$ such that
$$
\mathrm{TV}(\mathbb Q_n^+,\mathbb Q_n^-)
=
\mathrm{TV}
(\mathbb P_{n-1}^+,\mathbb P_{n-1}^-
)
\le 
\sqrt{\frac12\,\mathrm{KL}(\mathbb P_{n-1}^+\|\mathbb P_{n-1}^-)}
\le C_{\rm TV} h
$$
for all $n$ large enough.

Now, define the $\mathcal F_{n-1}$-measurable events
\[
A:=\{X_{n-1}=1,  \tau_n^\pi=n-1\}
\quad
\textrm{ and }
\quad
D:=\{\tau_n^\pi\le n-2\}.
\]
Since $A,D\in\sigma(X_1,\ldots,X_{n-1},U)$, we have
$\mathbb P_{p_n^\pm}[A]=\mathbb Q_n^\pm[A]$ and $\mathbb P_{p_n^\pm}[D]=\mathbb Q_n^\pm[D]$.
Hence, for all $n$ large enough,
\[
|\mathbb P_{p_n^+}[A]-\mathbb P_{p_n^-}[A]|
=
|\mathbb Q_n^+[A]-\mathbb Q_n^-[A]|
\le \mathrm{TV}(\mathbb Q_n^+,\mathbb Q_n^-)
\le C_{\rm TV}h,
\]
and similarly
\[
|\mathbb P_{p_n^+}[D]-\mathbb P_{p_n^-}[D]|
=
|\mathbb Q_n^+[D]-\mathbb Q_n^-[D]|
\le C_{\rm TV}h.
\]
We then treat two cases.

\medskip
\emph{Case (a):} $\mathbb P_{p_n^-}[D]\ge \tfrac14$.
Since $p_n^+>\tfrac12$, the $p_n^+$-oracle win probability is $V_n(p_n^+)=p_n^+$.
On~$D$, a necessary condition for~$\pi$ to win is that~$X_n=0$, hence
$\mathbb P_{p_n^+}[\mathcal W^\pi|\mathcal F_{n-1}]\le 1-p_n^+$ on~$D$. Since~$\mathbb P_{p_n^+}[\mathcal W^\pi|\mathcal F_{n-1}]\leq {p_n^+}$ $\mathbb P_{p_n^+}$-almost surely (this was proved in~(\ref{LemMin2})), this implies that
\begin{eqnarray*}
V_n(p_n^+)-W_n^\pi(p_n^+)
\!\!&\!\! = \!\! &\!\!
 \mathbb E_{p_n^+}[p_n^+-\mathbb P_{p_n^+}[\mathcal W^\pi|\mathcal F_{n-1}]] 
 \\[2mm]
\!\!&\!\! \geq \!\! &\!\!
 \mathbb E_{p_n^+}[ (p_n^+-\mathbb P_{p_n^+}[\mathcal W^\pi|\mathcal F_{n-1}] ) \mathbb I[D]] 
 \\[2mm]
\!\!&\!\! \geq \!\! &\!\!
(2p_n^+-1)\mathbb P_{p_n^+}[D]
.
\end{eqnarray*}
Since $\mathbb P_{p_n^+}[D]\ge \mathbb P_{p_n^-}[D]-C_{\rm TV}h \ge \tfrac14-C_{\rm TV}h$, this yields
\begin{equation}
\label{rrr1}
V_n(p_n^+)-W_n^\pi(p_n^+)
\ge \frac{2h}{\sqrt n}\Bigl(\frac14-C_{\rm TV}h\Bigr).
\end{equation}

\emph{Case (b):} $\mathbb P_{p_n^-}[D]< \tfrac14$.
Since $D=\{\tau_n^\pi\le n-2\}\in\sigma(X_1,\ldots,X_{n-2},U)$, the event $D$ is independent of $X_{n-1}$ under
$\mathbb P_{p_n^-}$. Hence,
\[
\mathbb P_{p_n^-}[X_{n-1}=1,\, D^c]
=\mathbb P_{p_n^-}[X_{n-1}=1]\mathbb P_{p_n^-}[D^c]
=p_n^-(1-\mathbb P_{p_n^-}[D]).
\]
Therefore,
$$
p_n^-(1-\mathbb P_{p_n^-}[D])
=
\mathbb P_{p_n^-}[X_{n-1}=1, \tau^\pi_n> n-2]
=
\mathbb P_{p_n^-}[A]+\mathbb P_{p_n^-}[X_{n-1}=1, \tau^\pi_n\geq n]
.
$$
Using $\mathbb P_{p_n^-}[A]\le \mathbb P_{p_n^+}[A]+C_{\rm TV}h$, we obtain
$$
p_n^-(1-\mathbb P_{p_n^-}[D])
\leq
\mathbb P_{p_n^-}[X_{n-1}=1, \tau^\pi_n\geq n]
+
\mathbb P_{p_n^+}[A]
+
C_{\rm TV}h
, 
$$
which, since $\mathbb P_{p_n^-}[D]< \tfrac14$, yields
$$
\mathbb P_{p_n^-}[X_{n-1}=1, \tau_n^\pi\geq n]
+
\mathbb P_{p_n^+}[A]
\ge 
\frac34 p_n^- - C_{\rm TV}h
.
$$
For all~$n$ large enough, we have $p_n^-\in(\tfrac13,\tfrac12)$ and $p_n^+\in(\frac{1}{2},1)$. Applying Lemma~\ref{LemmaLowerBound}(i)--(ii) (at~$p_n^-$ and~$p_n^+$, respectively) then provides
\begin{eqnarray}
\lefteqn{
\hspace{-8mm}
\max\{
V_n(p_n^-)-W_n^\pi(p_n^-)
, 
V_n(p_n^+)-W_n^\pi(p_n^+)
\} 
}
\nonumber
\\[2mm]
& &
\hspace{3mm}
\ge \frac12
\bigl(
(1-2p_n^-)\mathbb P_{p_n^-}[X_{n-1}=1,\tau_n^\pi\geq n]
+
(2p_n^+-1)\mathbb P_{p_n^+}[A]
\bigr) 
\nonumber
\\[2mm]
& &
\hspace{3mm}
= 
\frac{h}{\sqrt n}
\bigl(
\mathbb P_{p_n^-}[X_{n-1}=1,\tau_n^\pi\geq n]
+
\mathbb P_{p_n^+}[A]
\bigr) 
\nonumber
\\[2mm]
& &
\hspace{3mm}
\ge \frac{h}{\sqrt n}\Bigl( \frac34 p_n^- - C_{\rm TV}h\Bigr).
\label{rrr2}
\end{eqnarray}

We can now conclude on the basis of~(\ref{rrr1})--(\ref{rrr2}). Choose $h>0$ small enough to have $\tfrac14-C_{\rm TV}h> \tfrac18$. Since $p_n^-\to\tfrac12$,  we have
$\tfrac34 p_n^- - C_{\rm TV}h\ge \tfrac14$ for all $n$ large enough.
Hence, in both cases~(a)--(b), we have
\[
\sup_{p\in[p_0,1)}\bigl(V_n(p)-W_n^\pi(p)\bigr)\ge \frac{h}{4\sqrt n}
\]
for all $n$ large enough, which establishes the result.
\end{proof}


\section{Proofs for Section~3}
\label{appProofsConstants}

This appendix proves Theorems~\ref{thm:plugin_sharp}--\ref{thm:minimax_sharp}. We first record three elementary facts, used here and in Appendix~\ref{secProofsSampleSplit}.

\begin{lem}
\label{lem:gfacts}
Fix~$p\in(0,1)$ and let~$m=m(p)$; see~(\ref{eq:sn_homo}). Then,~$g(m)=\max_{\ell\ge1}g(\ell)$, and the identities~(\ref{eq:g_gap_minus}) and~(\ref{eq:g_gap_plus}) hold, the latter for~$m\ge2$; moreover, the right-hand sides of these identities are nonnegative.
\end{lem}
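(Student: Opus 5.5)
The plan is to prove the three claims of Lemma~\ref{lem:gfacts} by direct algebra on $g(\ell)=\ell p(1-p)^{\ell-1}$, using only the definition $m=\lceil 1/p\rceil-1$. This definition is equivalent to $\frac{1}{m+1}\le p<\frac1m$, with the convention $1/m=+\infty$ excluded; for $m\ge1$ it gives $p\in[\frac1{m+1},\frac1m)$.

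First, compute the consecutive ratio
\[
\frac{g(\ell+1)}{g(\ell)}=\frac{(\ell+1)(1-p)}{\ell}.
\]
This ratio is $\ge1$ if and only if $(\ell+1)(1-p)\ge \ell$, that is, if and only if $p\le\frac{1}{\ell+1}$. Since $\ell\mapsto \frac1{\ell+1}$ is decreasing, the sequence $g(\ell)$ increases for $\ell$ with $\frac1{\ell+1}\ge p$ and decreases afterwards, so $g$ is unimodal. The condition $\frac{1}{m+1}\le p<\frac1m$ then shows two things:
\begin{itemize}
\item $g(\ell)\le g(\ell+1)$ for $\ell\le m-1$, since $p<\frac1m\le\frac1{\ell+1}$;
\item $g(\ell+1)\le g(\ell)$ for $\ell\ge m$, since $p\ge\frac1{m+1}\ge\frac1{\ell+1}$.
\end{itemize}
Hence $g(m)=\max_{\ell\ge1}g(\ell)$.

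Second, verify the identities by factoring. For (\ref{eq:g_gap_minus}),
\[
g(m)-g(m+1)=p(1-p)^{m-1}\bigl[m-(m+1)(1-p)\bigr]=p(1-p)^{m-1}\bigl[(m+1)p-1\bigr],
\]
which equals $(m+1)p(1-p)^{m-1}\bigl(p-\frac1{m+1}\bigr)$. For (\ref{eq:g_gap_plus}), with $m\ge2$,
\[
g(m)-g(m-1)=p(1-p)^{m-2}\bigl[m(1-p)-(m-1)\bigr]=p(1-p)^{m-2}(1-mp),
\]
which equals $mp(1-p)^{m-2}\bigl(\frac1m-p\bigr)$.

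Nonnegativity then follows directly. The prefactors are positive, and $\Delta_-=p-\frac1{m+1}\ge0$ and $\Delta_+=\frac1m-p>0$ by the cell inclusion. Equivalently, it follows from the maximality already shown.

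No step is a real obstacle. The only care needed is the edge case $m=1$, where $p\ge\frac12$ and only the first identity is claimed, and using the half-open cell correctly at $p=\frac1{m+1}$, where $\Delta_-=0$.
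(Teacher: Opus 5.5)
Your proof is correct and follows essentially the paper's argument: the consecutive ratio $g(\ell+1)/g(\ell)$ gives unimodality, direct factoring gives the two identities, and the cell characterization $\frac{1}{m+1}\le p<\frac1m$ gives nonnegativity. The only cosmetic difference is that you read maximality off the cell directly, whereas the paper locates the mode via $\lfloor 1/p\rfloor$ and treats the case $1/p\in\mathbb N$ (where $g(m)=g(m+1)$) separately.
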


\begin{proof}
For~$\ell\ge1$, $g(\ell+1)/g(\ell)=\frac{\ell+1}{\ell}(1-p)\ge1$ if and only if~$\ell\le\frac1p-1$, so~$g$ is nondecreasing up to~$\lfloor 1/p\rfloor$ and nonincreasing afterwards; since~$m=\lceil 1/p\rceil-1$ equals~$\lfloor 1/p\rfloor$ when~$1/p\notin\mathbb N$ and equals~$\lfloor 1/p\rfloor-1$ otherwise (in which case~$g(m)=g(m+1)=g(\lfloor 1/p\rfloor)$), we get~$g(m)=\max_{\ell\ge1}g(\ell)$. Next, direct computations provide
$$
g(m)-g(m+1)
=p(1-p)^{m-1}\bigl\{(m+1)p-1\bigr\}
=K_-(p)\Delta_-
,
$$
$$
g(m)-g(m-1)
=p(1-p)^{m-2}\bigl\{1-mp\bigr\}
=K_+(p)\Delta_+
,
$$
and nonnegativity follows since~$m=\lceil 1/p\rceil-1$ is equivalent to~$\frac{1}{m+1}\le p<\frac1m$.
\end{proof}

\begin{lem}
\label{lem:cellsep}
Let~$p\in[p_0,1)$ and~$m=m(p)$, and set~$\Delta_-:=p-\frac{1}{m+1}$ and~$\Delta_+:=\frac1m-p$. Then $m\le M_0$,
$$
\Delta_-+\Delta_+=\frac{1}{m(m+1)}
,
\quad
\textrm{ and}
\quad
\max\{\Delta_-,\Delta_+\}\ \ge\ \frac{1}{2M_0(M_0+1)}=:c_1(p_0)>0
.
$$
Moreover, for any~$q\in(0,1)$ and any integer~$i\ge2$, $m(q)=m+i$ forces $|q-p|\ge c_0(p_0):=1/\{(M_0+1)(M_0+2)\}$, and so does~$m(q)=m-i$.
\end{lem}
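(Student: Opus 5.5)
The statement to prove is Lemma~\ref{lem:cellsep}. It is elementary, and I would prove it by working directly with the cell structure $m(q)=k\iff q\in[\frac1{k+1},\frac1k)$.

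\emph{Step 1: $m\le M_0$ and the sum identity.} Since $m(\cdot)$ is nonincreasing and $p\ge p_0$, we get $m=m(p)\le m(p_0)=M_0$. The identity is immediate:
\[
\Delta_-+\Delta_+=\frac1m-\frac1{m+1}=\frac1{m(m+1)}.
\]

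\emph{Step 2: the lower bound on the maximum.} Both $\Delta_\pm$ are nonnegative, so
\[
\max\{\Delta_-,\Delta_+\}\ge\tfrac12(\Delta_-+\Delta_+)=\frac1{2m(m+1)}.
\]
Since $m\le M_0$, this is at least $\frac1{2M_0(M_0+1)}$.

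\emph{Step 3: the separation claim, case $m(q)=m+i$ with $i\ge2$.} Here $q<\frac1{m+i}\le\frac1{m+2}$ and $p\ge\frac1{m+1}$. Hence
\[
p-q>\frac1{m+1}-\frac1{m+2}=\frac1{(m+1)(m+2)}\ge\frac1{(M_0+1)(M_0+2)}.
\]
The last inequality uses $m\le M_0$ and the fact that $x\mapsto 1/\{(x+1)(x+2)\}$ is decreasing.

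\emph{Step 4: the separation claim, case $m(q)=m-i$ with $i\ge2$.} This case requires $m-i\ge1$, so $m\ge3$. Then $q\ge\frac1{m-i+1}\ge\frac1{m-1}$, while $p<\frac1m$. Hence
\[
q-p>\frac1{m-1}-\frac1m=\frac1{m(m-1)}\ge\frac1{M_0(M_0-1)}\ge\frac1{(M_0+1)(M_0+2)}.
\]

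Nothing here is a real obstacle. The only care needed is in the edge cases: $m=1$ in Step~2, where $\Delta_+=1-p$ is still well defined as written in the lemma, and the vacuity of the $m-i$ case when $m-i<1$.
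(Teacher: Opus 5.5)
Your proof is correct and follows the same route as the paper's: monotonicity of $m(\cdot)$ gives $m\le M_0$, the maximum of $\Delta_\pm$ is at least half their sum $\frac{1}{m(m+1)}$, and the cell endpoints $\frac{1}{m+2}$ and $\frac{1}{m-1}$ give the separation claim. Your explicit remark that the case $m(q)=m-i$ is vacuous unless $m\ge3$ tidies a point the paper leaves implicit.
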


\begin{proof}
Since~$p\ge p_0$, we have~$m=\lceil 1/p\rceil-1\le\lceil 1/p_0\rceil-1=M_0$. The displayed identity on~$\Delta_-+\Delta_+$ is immediate, and the lower bound on the maximum follows since~$\Delta_-,\Delta_+\geq 0$ sum to~$1/\{m(m+1)\} \ge 1/\{M_0(M_0+1)\}$. Finally, $m(q)\ge m+2$ forces~$q<\frac{1}{m+2}$, hence
$
|q-p|\ge \frac{1}{m+1}-\frac{1}{m+2}=\frac{1}{(m+1)(m+2)}\ge c_0(p_0)
$,
and similarly~$m(q)\le m-2$ forces~$q\ge\frac{1}{m-1}$, hence~$|q-p|\ge\frac{1}{m-1}-\frac1m\ge c_0(p_0)$.
\end{proof}

\begin{lem}
\label{lem:gammamax}
For~$\gamma_k$ defined in~(\ref{eq:gammak}), one has~$\gamma_k\le\frac12$ for every integer~$k\ge2$, with equality if and only if~$k=2$. Moreover,~$\sup_{u>0}u\Phi(-u)$ is attained at the unique positive root~$u_\star=0.7517\ldots$ of~$\Phi(-u)=u\varphi(u)$, so that~$C_\star=\frac12\sup_{u>0}u\Phi(-u)=0.08498\ldots$
\end{lem}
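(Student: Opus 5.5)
The lemma has two independent parts, a discrete inequality for $\gamma_k$ and a one-dimensional calculus fact about $u\mapsto u\Phi(-u)$. I would treat them separately and finish with a numerical evaluation.

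\emph{Bound on $\gamma_k$.} From~(\ref{eq:gammak}),
$$\gamma_k=\Bigl(1-\frac1k\Bigr)^{k-2}\frac{\sqrt{k-1}}{k},$$
so I would split it into two factors. The first factor satisfies $(1-\frac1k)^{k-2}\le1$ for $k\ge2$, with equality at $k=2$. The second satisfies $\sqrt{k-1}/k\le\frac12$, because this is equivalent to $4(k-1)\le k^2$, that is, to $(k-2)^2\ge0$. Equality holds in the second bound if and only if $k=2$. Hence $\gamma_k\le\frac12$, with $\gamma_2=\frac12$ and strict inequality for $k\ge3$. No monotonicity in $k$ is needed.

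\emph{Maximizer of $f(u):=u\Phi(-u)$ on $(0,\infty)$.} Differentiating gives $f'(u)=\Phi(-u)-u\varphi(u)=\varphi(u)\{R(u)-u\}$, where $R(u):=\Phi(-u)/\varphi(u)$ is the Mills ratio. Using $\varphi'(u)=-u\varphi(u)$, one computes $R'(u)=uR(u)-1$. The standard tail bound $\Phi(-u)<\varphi(u)/u$ for $u>0$ gives $uR(u)<1$, and hence $R'<0$. This tail bound follows from
$$\Phi(-u)=\int_u^\infty\varphi(t)\,dt<\int_u^\infty \frac{t}{u}\,\varphi(t)\,dt=\frac{\varphi(u)}{u}.$$
Consequently $\psi(u):=R(u)-u$ is continuous and strictly decreasing. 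Its limits are $\psi(0^+)=R(0)=\sqrt{\pi/2}>0$ and $\psi(u)\to-\infty$ as $u\to\infty$. So $\psi$ has a unique positive root $u_\star$, which is exactly the unique positive solution of $\Phi(-u)=u\varphi(u)$. The sign of $f'$ is that of $\psi$, so $f$ increases on $(0,u_\star)$ and decreases on $(u_\star,\infty)$. Since also $f(0^+)=0$ and $f(u)\to0$ as $u\to\infty$, the supremum is attained, and only at $u_\star$.

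\emph{Numerical values.} I would locate $u_\star=0.7517\ldots$ by bisection on the monotone function $\psi$, and then evaluate $C_\star=\frac12u_\star\Phi(-u_\star)=0.08498\ldots$. The argument has no real obstacle. The only point needing care is the strict monotonicity of the Mills ratio, which is what gives uniqueness of the root, and the tail-bound step above handles it.
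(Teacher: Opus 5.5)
Your proposal is correct and follows the paper's proof. The bound on $\gamma_k$ comes from $(1-\frac1k)^{k-2}\le1$ together with $\sqrt{k-1}/k\le\frac12$; your $(k-2)^2\ge0$ is just an algebraic form of the paper's appeal to the monotonicity of $x(1-x)$ on $(0,\frac12]$. Uniqueness of the critical point of $u\Phi(-u)$ comes, as in the paper, from strict monotonicity of a Mills-ratio-type function. The only cosmetic difference is that you work with $R(u)-u$ and the identity $R'=uR-1$, so the Mills bound covers all $u>0$ at once and yields unimodality directly, whereas the paper uses the ratio $\Phi(-u)/(u\varphi(u))$ and needs a case split at $u=1$.
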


\begin{proof}
The first claim follows from the fact that~$(1-\frac1k)^{k-2}\le1$ for any~$k\geq 2$ (with equality if and only if~$k=2$) and the strict increase of~$x\mapsto x(1-x)$ increases on~$(0,\frac12]$. For the second claim, $h(u):=u\Phi(-u)$ satisfies~$h(0)=0$, $h>0$ on~$(0,\infty)$ and~$h(u)\to0$ as~$u\to\infty$, so its supremum is attained at a critical point, where~$h'(u)=\Phi(-u)-u\varphi(u)=0$. Uniqueness follows by considering~$r(u):=\Phi(-u)/(u\varphi(u))$, so that~$h'(u)=0$ if and only if~$r(u)=1$. Using~$(u\varphi(u))'=\varphi(u)(1-u^2)$,
$$
r'(u)=\frac{-u\varphi(u)-\Phi(-u)(1-u^2)}{u^2\varphi(u)}
.
$$
For~$u\in(0,1]$, the numerator is a sum of nonpositive terms, one strictly negative. For~$u>1$, the Mills ratio bound~$\Phi(-u)<\varphi(u)/u$ gives
$$
-u\varphi(u)+\Phi(-u)(u^2-1)<-u\varphi(u)+\frac{\varphi(u)(u^2-1)}{u}=-\frac{\varphi(u)}{u}<0
.
$$
Hence,~$r'<0$ on~$(0,\infty)$; as~$r(0+)=+\infty$ and~$r(\infty)=0$, the equation~$r(u)=1$ has a unique positive root~$u_\star$.
\end{proof}

\begin{proof}[Proof of Theorem~\ref{thm:plugin_sharp}]
Fix~$p\in[p_0,1)$, write~$m=m(p)\le M_0$, $\sigma_p:=\sqrt{p(1-p)}$, and recall~$t_-=n-m$, $t_+=n-m+1$.
\vspace{2mm}

\emph{Step 1: reduction to the two decision times~$t_\pm$.}
Let~$c_0(p_0):=\frac{1}{(M_0+1)(M_0+2)}$ and~$\delta_0:=c_0(p_0)/2$, and let~$E_n^{\delta_0}(p)$ be the event of Lemma~\ref{lem:Etails}, on which~$|\hat p_t-p|\le\delta_0$ for all~$t\ge\lceil n/2\rceil+1$; that lemma gives~$\mathbb P_p[(E_n^{\delta_0}(p))^c]\le Ce^{-cn}$ uniformly in~$p\in[p_0,1)$. We claim that, on~$E_n^{\delta_0}(p)$:

\emph{(a) the plug-in rule does not stop before~$t_-$.} By~(\ref{noearlystop}) it does not stop before~$\lceil n/2\rceil+1$; and for~$\lceil n/2\rceil+1\le t\le t_--1$ we have~$n-t+1\ge m+2$, so stopping would require~$\hat p_t<\frac{1}{m+2}\le p-c_0(p_0)$, contradicting~$|\hat p_t-p|\le\delta_0$.

\emph{(b) If it has not stopped earlier, then the plug-in rule stops on the first success (if any) in~$\{t_++1,\ldots,n\}$.} For such~$t$ we have~$n-t+1\le m-1$, so~$\frac{1}{n-t+1}\ge\frac{1}{m-1}\ge p+c_0(p_0)>\hat p_t$.

Consequently, on~$E_n^{\delta_0}(p)$ the plug-in and oracle rules can differ only through the decisions taken at~$t_-$ and~$t_+$ (recall indeed that the oracle rule stops on the first success, if any, from~$t_+=n-m+1$ onwards). Let then~$\mathcal E_-$ denote the event that the plug-in rule stops at~$t_-$, where the oracle does not, and~$\mathcal E_+$ the event that it fails to stop at~$t_+$, where the oracle does, and let
$$
D_\pm(p)
:=
\mathbb P_p\bigl[\textrm{the oracle wins},\,\mathcal E_\pm\bigr]
-
\mathbb P_p\bigl[\textrm{the plug-in rule wins},\,\mathcal E_\pm\bigr]
$$
be the contribution of the decision at~$t_\pm$ to the deficit. We claim that
\begin{equation}
\label{eq:sharp_two_terms}
V_n(p)-W_n(p)=D_-(p)+D_+(p)+O(e^{-cn})
,
\end{equation}
uniformly in~$p\in[p_0,1)$. To see this, write~$E:=E_n^{\delta_0}(p)$ and denote by~$\mathcal W^{\rm or}$ and~$\mathcal W^{\hat\pi}$ the events that the oracle and the plug-in rule win, respectively. Since~$\mathbb P_p[\mathcal W^{\rm or}\cap E^c]$ and~$\mathbb P_p[\mathcal W^{\hat\pi}\cap E^c]$ are both bounded by~$\mathbb P_p[E^c]\le Ce^{-cn}$,
$$
V_n(p)-W_n(p)
=
\mathbb P_p[\mathcal W^{\rm or}]-\mathbb P_p[\mathcal W^{\hat\pi}]
=
\mathbb P_p[\mathcal W^{\rm or}\cap E]-\mathbb P_p[\mathcal W^{\hat\pi}\cap E]+O(e^{-cn})
.
$$
On~$E$, the two rules stop at the same time off~$\mathcal E_-\cup\mathcal E_+$, so that~$\mathcal W^{\rm or}$ and~$\mathcal W^{\hat\pi}$ coincide there; as~$\mathcal E_-$ and~$\mathcal E_+$ are disjoint, this gives
$$
\mathbb P_p[\mathcal W^{\rm or}\cap E]-\mathbb P_p[\mathcal W^{\hat\pi}\cap E]
=
\sum_{\varepsilon\in\{-,+\}}
\Bigl(
\mathbb P_p[\mathcal W^{\rm or}\cap E\cap\mathcal E_\varepsilon]
-
\mathbb P_p[\mathcal W^{\hat\pi}\cap E\cap\mathcal E_\varepsilon]
\Bigr)
.
$$
Discarding the restriction to~$E$ in each of these four probabilities produces a further~$O(e^{-cn})$, and the resulting terms are~$D_-(p)$ and~$D_+(p)$; this establishes~(\ref{eq:sharp_two_terms}). Steps~2 and~3 below evaluate~$D_-$ and~$D_+$.
\vspace{2mm}

\emph{Step 2: the loss at~$t_-$.}
On~$E_n^{\delta_0}(p)$, the plug-in rule stops at~$t_-$ if and only if~$X_{t_-}=1$ and~$S_{t_-}\le b_-:=\lceil t_-/(m+1)\rceil-1$; writing~$S_{t_-}=S_{t_--1}+X_{t_-}$, this reads~$X_{t_-}=1$ and~$S_{t_--1}\le b_--1$, so that~$\mathcal E_-$ is~$\sigma(X_1,\ldots,X_{t_-})$-measurable and satisfies
$$
\mathbb P_p[\mathcal E_-]
=
p\, \mathbb P_p\bigl[S_{t_--1}\le b_--1\bigr]
,
$$
by independence of~$X_{t_-}$ and~$S_{t_--1}$. On~$\mathcal E_-$ the plug-in rule stops at~$t_-$ and hence wins if and only if~$X_{t_-+1}=\cdots=X_n=0$, an event of probability~$(1-p)^m$ independent of~$\mathcal E_-$; the oracle, which never stops at~$t_-$, wins if and only if exactly one success occurs in~$\{t_-+1,\ldots,n\}$, with probability~$g(m)$. If~$X_{t_-}=0$, or if~$S_{t_--1}>b_--1$, both rules stop at the first success in~$\{t_-+1,\ldots,n\}$ and their outcomes coincide. Hence
$$
D_-(p)
=
p\,\mathbb P_p\bigl[S_{t_--1}\le b_--1\bigr]
\bigl(g(m)-(1-p)^m\bigr)
=
K_-(p)\Delta_-\ \mathbb P_p\bigl[S_{t_--1}\le b_--1\bigr]
,
$$
where the last equality uses~$g(m)-(1-p)^m=(1-p)^{m-1}[mp-(1-p)]=(m+1)(1-p)^{m-1}\Delta_-$ and~$K_-(p)=(m+1)p(1-p)^{m-1}$.
\vspace{2mm}

\emph{Step 3: the loss at~$t_+$.}
Assume~$m\ge2$ (for~$m=1$ one has~$t_+=n$, where the terminal clause makes the plug-in rule stop on a success, so that~$D_+(p):=0$). On~$E_n^{\delta_0}(p)$, the plug-in rule stops at~$t_+$ if and only if~$X_{t_+}=1$ and~$S_{t_+-1}\le b_+-1$, with~$b_+:=\lceil t_+/m\rceil-1$, whereas the oracle stops at~$t_+$ as soon as~$X_{t_+}=1$. If~$X_{t_+}=0$ both continue and coincide. If~$X_{t_+}=1$ and~$S_{t_+-1}>b_+-1$---that is, on~$\mathcal E_+$---the oracle stops at~$t_+$ and wins with probability~$(1-p)^{m-1}$, while the plug-in rule continues and, by Step~1(b), stops at the first success in~$\{t_++1,\ldots,n\}$, winning with probability~$g(m-1)=(m-1)p(1-p)^{m-2}$. Therefore,
$$
D_+(p)
=
p\,\mathbb P_p\bigl[S_{t_+-1}>b_+-1\bigr]\bigl((1-p)^{m-1}-g(m-1)\bigr)
=
K_+(p)\Delta_+\ \mathbb P_p\bigl[S_{t_+-1}\ge b_+\bigr]
,
$$
since~$(1-p)^{m-1}-g(m-1)=(1-p)^{m-2}[1-mp]=m(1-p)^{m-2}\Delta_+$ and~$K_+(p)=mp(1-p)^{m-2}$.
\vspace{2mm}

\emph{Step 4: from~(\ref{eq:sharp_two_terms}) to the constant.}
Fix~$\varepsilon>0$ and let
$$
C_3(p_0):=\sup\{\max(K_-(p),K_+(p)):p\in[p_0,1)\}<\infty.
$$

\emph{(i) Large distances.} Let~$\Lambda\ge1$ and assume that~$\Delta_-\ge \Lambda/\sqrt n$. Write~$N:=t_--1$. Since~$b_-=\lceil t_-/(m+1)\rceil-1<t_-/(m+1)$, the event~$\{S_N\le b_--1\}$ entails
$$
S_N-Np
<
\frac{t_-}{m+1}-1-Np
=
-N\Delta_-+\Bigl(\frac{1}{m+1}-1\Bigr)
\le
-N\Delta_-
,
$$
where we used~$t_-=N+1$ and~$\frac{1}{m+1}\le\frac12$. Hoeffding's inequality  thus gives~$\mathbb P_p[S_N\le b_--1]\le e^{-2N\Delta_-^2}$. Moreover~$N=n-m-1\ge n/2$, because~$m\le M_0$ and~$n\ge2(M_0+1)$, so that, by Step~2,
$$
D_-(p)
=
K_-(p)\Delta_-\,\mathbb P_p[S_N\le b_--1]
\le
C_3(p_0)\,\Delta_-e^{-n\Delta_-^2}
.
$$
Finally, $x\mapsto xe^{-nx^2}$ is nonincreasing on~$[1/\sqrt{2n},\infty)$, an interval that contains~$\Lambda/\sqrt n$ since~$\Lambda\ge1$; evaluating at~$\Delta_-\ge \Lambda/\sqrt n$ therefore yields
$$
D_-(p)
\le
C_3(p_0)\,\frac{\Lambda}{\sqrt n}\,e^{-\Lambda^2}
,
$$
and similarly for~$D_+(p)$. Choose~$\Lambda=\Lambda(\varepsilon)$ so large that this is~$\le\varepsilon/\sqrt n$. This fixes~$\Lambda$ for the rest of the proof; the constants implied by the~$O(\cdot)$'s below may depend on it.

\emph{(ii) At most one small distance.} By Lemma~\ref{lem:cellsep}, $\Delta_-+\Delta_+\ge\frac{1}{M_0(M_0+1)}>2\Lambda/\sqrt n$ for~$n$ large, so at most one of~$\Delta_\pm$ is~$<\Lambda/\sqrt n$; if none is, (\ref{eq:sharp_two_terms}) and~(i) give~$V_n(p)-W_n(p)\le2\varepsilon/\sqrt n+O(e^{-cn})$.

\emph{(iii) One small distance.} Suppose~$\Delta_-<\Lambda/\sqrt n$, the case~$\Delta_+<\Lambda/\sqrt n$ being identical. Then~$D_+(p)\le\varepsilon/\sqrt n$ by~(i), and~$p$ lies within~$\Lambda/\sqrt n$ of~$q:=\frac{1}{m+1}\in[\frac{1}{M_0+1},\frac12]$, hence in a fixed compact~$J\subset(0,1)$ for~$n$ large, on which~$\sigma_p\ge\sigma_0>0$. Since~$\mathbb E|X_1-p|^3\le p(1-p)$, the Berry--Esseen theorem \citep[Chapter~5]{Petrov1995} gives~$A=A(p_0)$ with
$$
\biggl|\mathbb P_p\bigl[S_{t_--1}\le b_--1\bigr]-\Phi\biggl(\frac{b_--1-(t_--1)p}{\sigma_p\sqrt{t_--1}}\biggr)\biggr|\le\frac{A}{\sqrt n}
\qquad\textrm{for all }p\in J
.
$$
Moreover~$b_--1-(t_--1)p=\frac{t_-}{m+1}-t_-p+O(1)=-t_-\Delta_-+O(1)$, so that, $\Phi$ being Lipschitz and~$t_-=n(1+O(1/n))$, the above normal quantity equals~$\Phi(-\Delta_-\sqrt n/\sigma_p)+O(1/\sqrt n)$. Since~$\Delta_-\le \Lambda/\sqrt n$, multiplying by~$K_-(p)\Delta_-$ gives
$$
D_-(p)
\le
K_-(p)\sigma_p\,\frac{1}{\sqrt n}\,\frac{\Delta_-\sqrt n}{\sigma_p}\Phi\Bigl(-\frac{\Delta_-\sqrt n}{\sigma_p}\Bigr)
+
O\Bigl(\frac{1}{n}\Bigr)
\le
\frac{2C_\star K_-(p)\sigma_p}{\sqrt n}+O\Bigl(\frac 1n\Bigr)
,
$$
using~$\sup_{u\ge0}u\Phi(-u)=2C_\star$. Finally, $K_-$ and~$\sigma_\cdot$ being Lipschitz on~$J$ and~$|p-q|<\Lambda/\sqrt n$,
$$
K_-(p)\sigma_p=K_-(q)\sigma_q+O(1/\sqrt n)=\gamma_{m+1}+O(1/\sqrt n)\le\tfrac12+O(1/\sqrt n)
,
$$
where~$K_-(q)\sigma_q=(m+1)q(1-q)^{m-1}\sqrt{q(1-q)}=\gamma_{m+1}$ for~$q=\frac{1}{m+1}$, and Lemma~\ref{lem:gammamax} was used. Hence~$D_-(p)\le C_\star/\sqrt n+O(1/n)$, uniformly in~$p$.

Collecting~(ii) and~(iii) gives~$\limsup_n\sqrt n\sup_{p\in[p_0,1)}(V_n(p)-W_n(p))\le C_\star+2\varepsilon$, and~$\varepsilon\searrow0$ yields the upper bound in~(\ref{eq:plugin_sharp}).
\vspace{2mm}

\emph{Step 5: the matching lower bound and the least favourable sequences.}
Throughout this step we write~$h(u):=u\Phi(-u)$, as in the proof of Lemma~\ref{lem:gammamax}: $h$ is continuous on~$[0,\infty)$, vanishes at~$0$ and at infinity, and attains its maximum~$h(u_\star)=2C_\star$ at the single point~$u_\star$.

\emph{(a) Sufficiency.} Fix~$u>0$ and let~$(p_n)$ be any~$[p_0,1)$-valued sequence with~$\sqrt n\,|p_n-\frac12|\to\frac u2$; write~$\delta_n:=|p_n-\frac12|$, so that~$2\sqrt n\,\delta_n\to u$ and, for~$n$ large, $p_n\in(\max\{p_0,\frac13\},1)\setminus\{\frac12\}$. Both possible positions of~$p_n$ relative to~$\frac12$ produce the same critical time~$n-1$ and the same threshold~$b_n:=\lceil(n-1)/2\rceil-1$:
if~$p_n<\frac12$, then~$m(p_n)=2$ and~$t_+=n-1$, with~$\Delta_+=\delta_n$, $K_+(p_n)=2p_n$ and~$b_+=b_n$, and the plug-in rule errs at~$t_+$ by \emph{failing to stop}, on the event~$\mathcal A_n:=\{S_{n-2}\ge b_n\}$; if~$p_n>\frac12$, then~$m(p_n)=1$ and~$t_-=n-1$, with~$\Delta_-=\delta_n$, $K_-(p_n)=2p_n$ and~$b_-=b_n$, and the plug-in rule errs at~$t_-$ in the opposite direction, by \emph{stopping} although the oracle waits for the terminal time, on the event~$\mathcal A_n:=\{S_{n-2}\le b_n-1\}$. In both cases, Step~3 gives
$$
V_n(p_n)-W_n(p_n)
  \geq  2p_n\delta_n\,\mathbb P_{p_n}[\mathcal A_n]-Ce^{-cn}
.
$$
Slud's inequality \citep[Theorem~2.1]{Slud1977} applies to~$S_{n-2}\sim\mathrm{Bin}(n-2,p_n)$ in the first case, and, upon rewriting~$\mathcal A_n=\{n-2-S_{n-2}\ge n-1-b_n\}$, to~$n-2-S_{n-2}\sim\mathrm{Bin}(n-2,1-p_n)$ in the second; in either case the success parameter is~$\le\frac12$, the relevant threshold lies between the mean and~$n-2$ minus the mean for~$n$ large, and it exceeds that mean by~$(n-2)\delta_n+O(1)=\frac{u\sqrt n}{2}+O(1)$, while the corresponding standard deviation is~$\sqrt{(n-2)p_n(1-p_n)}=\frac{\sqrt n}{2}(1+o(1))$. Hence~$\mathbb P_{p_n}[\mathcal A_n]\ge1-\Phi(u+o(1))=\Phi(-u)+o(1)$, and~$2p_n\delta_n\sqrt n\to\frac u2$ yields
$$
\liminf_{n\to\infty}\sqrt n\bigl(V_n(p_n)-W_n(p_n)\bigr)
  \geq  \tfrac12 h(u)
.
$$
The two cases differ only in the direction of the misclassification, the coefficient~$K_\pm(p_n)=2p_n\to1$, the variance~$\sigma_{p_n}^2\to\frac14$ and the effective sample size~$n-2$ being the same; the barrier is thus two-sided.

Applying the above to~$p_n=\frac12-\frac{u}{2\sqrt n}$ and taking the supremum over~$u>0$ gives
$$
\liminf_{n\to\infty}\sqrt n\sup_{p\in[p_0,1)}\bigl(V_n(p)-W_n(p)\bigr)
  \geq  \tfrac12\sup_{u>0}h(u)
=
C_\star
,
$$
which, together with Step~4, proves~(\ref{eq:plugin_sharp}). Taking~$u=u_\star$ then shows that any sequence with~$\sqrt n\,|p_n-\frac12|\to\frac{u_\star}{2}$ satisfies~$\liminf_n\sqrt n(V_n(p_n)-W_n(p_n))\ge C_\star$; since~(\ref{eq:plugin_sharp}) bounds the corresponding~$\limsup$ by~$C_\star$, such a sequence is least favourable.
\vspace{2mm}

\emph{(b) Necessity.} Conversely, let~$(p_n)$ be an arbitrary~$[p_0,1)$-valued sequence, let~$\frac{1}{k_n}$ be a point of~$\mathcal B_{p_0}$ nearest to~$p_n$, and set~$u_n:=\sqrt n\,\Delta_{p_n}/\sigma_{p_n}$. Fix~$\varepsilon>0$ and let~$\Lambda=\Lambda(\varepsilon)$ be as in Step~4. If~$\Delta_{p_n}\ge\Lambda/\sqrt n$, then Step~4(i)--(ii) gives~$\sqrt n(V_n(p_n)-W_n(p_n))\le2\varepsilon+O(n^{-1/2})$; otherwise, the first inequality in the display of Step~4(iii), combined with~$K_\pm(p_n)\sigma_{p_n}=\gamma_{k_n}+O(n^{-1/2})$ and with the boundedness of~$h$, gives~$\sqrt n(V_n(p_n)-W_n(p_n))\le\gamma_{k_n}h(u_n)+\varepsilon+O(n^{-1/2})$. Since~$h\ge0$, both cases are covered by
$$
\sqrt n\bigl(V_n(p_n)-W_n(p_n)\bigr)
\ \le\
\gamma_{k_n}h(u_n)+2\varepsilon+O(n^{-1/2})
.
$$
Assume now that the left-hand side converges to~$C_\star$. Letting~$n\to\infty$ and then~$\varepsilon\searrow0$ yields~$\limsup_n\gamma_{k_n}h(u_n)\ge C_\star$, whereas~$\gamma_{k_n}h(u_n)\le\gamma_2h(u_\star)=C_\star$ for every~$n$ by Lemma~\ref{lem:gammamax}; therefore~$\gamma_{k_n}h(u_n)\to C_\star$. As~$k_n$ ranges over the finite set~$\{2,\ldots,M_0+1\}$ and~$\gamma_k<\gamma_2=\frac12$ for~$k\ge3$, this forces~$k_n=2$ for~$n$ large, hence~$h(u_n)\to h(u_\star)$ and, $h$ having a unique maximizer and vanishing at both ends of~$[0,\infty)$, $u_n\to u_\star$. (Both failure modes are thereby excluded: a sequence with~$u_n\to0$ leads the plug-in rule to misclassify~$m(p_n)$ with non-vanishing probability, but at a negligible cost, while one with~$u_n\to\infty$ entails a substantial cost that is incurred too rarely.) Finally, $k_n=2$ means that~$\Delta_{p_n}=|p_n-\frac12|$, which tends to~$0$, so that~$\sigma_{p_n}\to\frac12$ and~$\sqrt n\,|p_n-\frac12|=\sigma_{p_n}u_n\to\frac{u_\star}{2}$. This completes the proof of Theorem~\ref{thm:plugin_sharp}.
\end{proof}

We turn to Proposition~\ref{prop:local_at_1k}, which describes both the deficit of the plug-in rule and the local minimax risk at an arbitrary transition point~$\frac1k$. Its proof requires two lemmas. The first identifies the limiting local experiment there and evaluates the affinity that Le~Cam's two-point bound requires; recall that~$\mathrm{TV}$ denotes the total variation distance.

\begin{lem}
\label{lem:LAN}
Fix~$h>0$ and an integer~$k\ge2$, let
$$
p_n^\pm:=\frac1k\pm\frac{h\sigma_k}{\sqrt n},
\quad
\textrm{with } 
\
\sigma_k:=
\sqrt{\frac1k\Big(1-\frac1k\Big)}
,
$$
write~$N:=n-k$, and let~$\mathbb P^\pm_N$ denote the law of~$(X_1,\ldots,X_N)$ under~$\mathbb P_{p_n^\pm}$, with likelihood ratio~$L_N:=d\mathbb P^+_N/d\mathbb P^-_N$. Then, (i) one has
$$
\log L_N
\ \stackrel{d}{\longrightarrow}\
\mathcal N(-2h^2,4h^2)
\qquad
\textrm{under }\mathbb P^-_N
,
$$
so that, as~$n$ diverges to infinity, the local experiments~$(\mathbb P^-_N,\mathbb P^+_N)$ converge to the Gaussian shift experiment~$(\mathcal N(-h,1),\mathcal N(h,1))$; (ii) if~$Q_n^\pm$ denotes the law of~$(X_1,\ldots,X_N,U)$ under~$\mathbb P_{p_n^\pm}$, where~$U$ realizes the possible internal randomization of a rule (independent of the~$X_t$'s, with a \mbox{$p$-free} distribution), then
$$
1-\mathrm{TV}(Q_n^+,Q_n^-)
\to
2\Phi(-h)
,
$$
as~$n$ diverges to infinity.
\end{lem}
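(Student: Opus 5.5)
The plan is to compute the log-likelihood ratio explicitly and then apply a CLT. Since the $X_t$ are i.i.d., $L_N$ depends on the data only through $S_N$:
$$
\log L_N=S_N\log\frac{p_n^+}{p_n^-}+(N-S_N)\log\frac{1-p_n^+}{1-p_n^-}.
$$
Write $\delta_n:=h\sigma_k/\sqrt n$, so that $p_n^\pm=\frac1k\pm\delta_n$. Taylor-expanding the logarithms gives
$$
\log\frac{p_n^+}{p_n^-}=2\delta_n k+O(n^{-3/2})
\quad\text{and}\quad
\log\frac{1-p_n^+}{1-p_n^-}=-\frac{2\delta_n k}{k-1}+O(n^{-3/2}).
$$
Hence, up to centering,
$$
\log L_N=\frac{2\delta_n}{\sigma_k^2}\,(S_N-Np_n^-)+c_N,
$$
with a deterministic constant $c_N$. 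Here we use that $1/(p(1-p))$ at $p=1/k$ equals $1/\sigma_k^2$, and the second-order terms are controlled uniformly because $p_n^\pm$ stay in a compact subset of $(0,1)$.

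Under $\mathbb P^-_N$, $S_N\sim\mathrm{Bin}(N,p_n^-)$ with $N=n-k$. By the Lindeberg (or Berry--Esseen) CLT for triangular arrays,
$$
\frac{S_N-Np_n^-}{\sqrt{N p_n^-(1-p_n^-)}}\to\mathcal N(0,1),
\qquad N/n\to1,\quad p_n^-(1-p_n^-)\to\sigma_k^2.
$$
The random part of $\log L_N$ therefore has asymptotic variance
$$
\Bigl(\frac{2\delta_n}{\sigma_k^2}\Bigr)^2 N\sigma_k^2\to 4h^2.
$$
Rather than track $c_N$ through the Taylor expansion, I would evaluate the mean directly: $\mathbb E^-[\log L_N]=-N\,\mathrm{kl}(p_n^-\|p_n^+)$. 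Using the second-order expansion $\mathrm{kl}(u\|v)=\frac{(u-v)^2}{2v(1-v)}+O(|u-v|^3)$ (as in the proof of Theorem~\ref{thmLowerBound}), this gives
$$
-N\frac{4\delta_n^2}{2\sigma_k^2}+o(1)\to-2h^2.
$$
Together these establish $\log L_N\to\mathcal N(-2h^2,4h^2)$. That the mean equals minus half the variance is exactly Le~Cam's first lemma condition. Contiguity follows, and the limit experiment is the Gaussian shift $(\mathcal N(-h,1),\mathcal N(h,1))$, whose log-likelihood ratio $2hx$ has this same law under $\mathcal N(-h,1)$.

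For (ii), I would first observe, as in the proof of Theorem~\ref{thmLowerBound}, that $U$ is independent of the data with a $p$-free law. Therefore $\mathrm{TV}(Q_n^+,Q_n^-)=\mathrm{TV}(\mathbb P_N^+,\mathbb P_N^-)$. Then I would use
$$
1-\mathrm{TV}=\mathbb E^-[\min(1,L_N)].
$$
The function $x\mapsto\min(1,e^x)$ is bounded and continuous, so part (i) gives
$$
\mathbb E^-[\min(1,L_N)]\to\mathbb E\bigl[\min(1,e^{Z})\bigr],\qquad Z\sim\mathcal N(-2h^2,4h^2).
$$
This limit equals $1-\mathrm{TV}(\mathcal N(-h,1),\mathcal N(h,1))=1-(2\Phi(h)-1)=2\Phi(-h)$. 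Alternatively, it can be computed directly by splitting at $Z=0$ and using the change-of-measure identity $\mathbb E[e^Z\mathbb I(Z<0)]=\mathbb P[Z'<0]$ for $Z'\sim\mathcal N(2h^2,4h^2)$. Each of the two halves contributes $\Phi(-h)$.

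The main obstacle is purely bookkeeping: making the Taylor remainders uniform and confirming that replacing $n$ by $N=n-k$ and $p_n^-(1-p_n^-)$ by $\sigma_k^2$ costs only $o(1)$. Both are routine, since $\delta_n=O(n^{-1/2})$ and $k$ is fixed.
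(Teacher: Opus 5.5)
Your proposal is correct and follows essentially the same route as the paper: you write $\log L_N$ as an affine function of $S_N$, Taylor-expand to get slope $2\varepsilon/\sigma_k^2$ with negligible remainder, apply a (triangular-array) CLT under $\mathbb P^-_N$, and for part~(ii) use $1-\mathrm{TV}=\mathbb E^-[\min(1,L_N)]$, the bounded-continuous-function argument and the Gaussian computation giving $2\Phi(-h)$. The only difference is cosmetic: you center at $Np_n^-$ and obtain the limiting mean $-2h^2$ from $-N\,\mathrm{kl}(p_n^-\|p_n^+)$, whereas the paper centers at $N/k$ and reads the mean off directly from $\mathbb E[S_N-N/k]=-N\varepsilon$.
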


\begin{proof}
(i) Write~$S_N:=\sum_{t=1}^N X_t$, and abbreviate~$a:=\frac1k$, $b:=1-\frac1k$, and~$\varepsilon:=\frac{h\sigma_k}{\sqrt n}$. Then,
\begin{eqnarray*}
\log L_N
\!\!&\!\! = \!\! &\!\!
S_N\log\frac{a+\varepsilon}{a-\varepsilon}
+
(N-S_N)\log\frac{b-\varepsilon}{b+\varepsilon}
\\[2mm]
\!\!&\!\! = \!\! &\!\!
2\varepsilon
\Bigl( \frac{S_N}{a}-\frac{N-S_N}{b} \Bigr)
+
O(N\varepsilon^3)
=
\frac{2\varepsilon}{\sigma_k^2}\bigl(S_N-Na\bigr)
+
O\Big(\frac{1}{\sqrt{n}}\Big)
,
\end{eqnarray*}
where we used
$$
\frac1a+\frac1b=\frac{a+b}{ab}=\frac{1}{\sigma_k^2}
\quad
\textrm{ and }
\quad
N\varepsilon^3
=
O\Big(\frac{1}{\sqrt{n}}\Big)
.
$$
Under~$\mathbb P^-_N$, $S_N\sim{\rm Bin}(N,a-\varepsilon)$, so that~$\mathbb E[S_N-Na]=-N\varepsilon$ and~${\rm Var}[S_N-Na]=N\sigma_k^2(1+o(1))$. Since
$$
\frac{2\varepsilon}{\sigma_k^2}(-N\varepsilon)
=-\frac{2h^2N}{n}
\to
-2h^2 \quad
\textrm{ and }
\quad
\Bigl(\frac{2\varepsilon}{\sigma_k^2}\Bigr)^2N\sigma_k^2
=
\frac{4h^2N}{n}
\to
4h^2, 
$$
the Lindeberg--Lévy central limit theorem yields
$$
\log L_N
\ \stackrel{d}{\longrightarrow}\
G\sim\mathcal N(-2h^2,4h^2)
\qquad\textrm{under }\mathbb P^-_N
.
$$
The Gaussian shift experiment~$(\mathcal N(-h,1),\mathcal N(h,1))$ has log-likelihood ratio~$2hZ$, which is~$\mathcal N(-2h^2,4h^2)$-distributed when~$Z\sim\mathcal N(-h,1)$; this establishes~(i).
\vspace{2mm}

(ii) Without loss of generality, we assume that~$n$ is large enough to have~$p_n^\pm\in(0,1)$. Since~$U$ is independent of the~$X_t$'s and its law does not depend on~$p$, we have
$$
\mathrm{TV}(Q_n^+,Q_n^-)
=
\mathrm{TV}(\mathbb P^+_N,\mathbb P^-_N)
.
$$
Now, $\mathbb P^\pm_N$ are supported on the finite set~$\{0,1\}^N$ and, since~$p_n^\pm\in(0,1)$, give a positive mass to each of its points, so that~$L_N(\mathbf x)=\mathbb P^+_N[\{\mathbf x\}]/\mathbb P^-_N[\{\mathbf x\}]$ for any~$\mathbf x\in\{0,1\}^N$. Combining~$\mathrm{TV}(\mathbb P^+_N,\mathbb P^-_N)=\frac12\sum_{\mathbf x}|\mathbb P^+_N[\{\mathbf x\}]-\mathbb P^-_N[\{\mathbf x\}]|$ with $\min(r,s)=\frac12(r+s-|r-s|)$, and then factoring out~$\mathbb P^-_N[\{\mathbf x\}]$, we obtain
$$
1-\mathrm{TV}(\mathbb P^+_N,\mathbb P^-_N)
=
\sum_{\mathbf x\in\{0,1\}^N}\min\bigl(\mathbb P^+_N[\{\mathbf x\}],\mathbb P^-_N[\{\mathbf x\}]\bigr)
=
\mathbb E_{\mathbb P^-_N}[\min(1,L_N)]
.
$$
Since~$x\mapsto\min(1,e^x)$ is bounded and continuous, the weak convergence above yields
$$
1-\mathrm{TV}(\mathbb P^+_N,\mathbb P^-_N) 
\to
 \mathbb E[\min(1,e^{G})]
.
$$
Finally, with~$\mu:=-2h^2$ and~$\sigma:=2h$, we have~$\mu+\frac{\sigma^2}{2}=0$, so that
$$
\mathbb E[\min(1,e^{G})]
=
\mathbb P[G>0]+\mathbb E[e^{G}\mathbb I[G\le0]]
=
\Phi\Bigl(\frac{\mu}{\sigma}\Bigr)+e^{\mu+\frac{\sigma^2}{2}}\Phi\Bigl(\frac{-\mu-\sigma^2}{\sigma}\Bigr)
=
2\Phi(-h)
,
$$
which is the announced limit.
\end{proof}

The second lemma is the analogue, at the boundary~$\frac1k$, of Lemma~\ref{LemmaLowerBound}. Throughout, $t_k:=n-k+1$ denotes the first time of the oracle window~$\{n-k+1,\ldots,n\}$ associated with~$m(p)=k$.

\begin{lem}
\label{lem:switch_k}
Let~$\pi$ be a (possibly randomized) \mbox{$p$-blind} rule with stopping time~$\tau_n^\pi$, and let~$k\ge2$. Then, (i) for any~$p\in(\frac{1}{k+1},\frac1k)$,
$$
V_n(p)-W_n^\pi(p)
  \geq  k(1-p)^{k-2}\Bigl(\frac1k-p\Bigr)\,\mathbb P_p\bigl[X_{t_k}=1,\ \tau_n^\pi\ge t_k+1\bigr]
;
$$
(ii) for any~$p\in(\frac1k,\frac{1}{k-1})$,
$$
V_n(p)-W_n^\pi(p)
  \geq  k(1-p)^{k-2}\Bigl(p-\frac1k\Bigr)\,\mathbb P_p\bigl[X_{t_k}=1,\ \tau_n^\pi=t_k\bigr]
.
$$
\end{lem}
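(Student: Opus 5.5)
The plan is to mimic the proof of Lemma~\ref{LemmaLowerBound}: modify $\pi$ on a single $\mathcal F_{t_k}$-measurable event, where $\mathcal F_t:=\sigma(X_1,\ldots,X_t,U)$, so as to obtain a $p$-blind rule $\tilde\pi$ that does at least as well as $\pi$ and gains the displayed amount. Then invoke the optimality of the $p$-oracle, $V_n(p)\ge W_n^{\tilde\pi}(p)$. Throughout we assume $n\ge k$, so that $t_k\ge1$, and we use the fact that $X_{t_k+1},\ldots,X_n$ are i.i.d.\ $\mathrm{Bernoulli}(p)$ and independent of $\mathcal F_{t_k}$.

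\emph{Part (i).} Here $p\in(\frac1{k+1},\frac1k)$, so that $m(p)=k$ and the oracle stops on a success at $t_k$.
\begin{itemize}
\item[1.] Let $B:=\{X_{t_k}=1,\ \tau_n^\pi\ge t_k+1\}\in\mathcal F_{t_k}$, and define $\tilde\tau_n:=t_k$ on $B$ and $\tilde\tau_n:=\tau_n^\pi$ otherwise. This is a stopping time, exactly as in Lemma~\ref{LemmaLowerBound}.
\item[2.] On $B$, $\tilde\pi$ wins if and only if $X_{t_k+1}=\cdots=X_n=0$, so its conditional win probability given $\mathcal F_{t_k}$ is $(1-p)^{k-1}$.
\item[3.] On $B$, $\pi$ has not stopped by time $t_k$. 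Given $\mathcal F_{t_k}$, its continuation is a (randomized) stopping rule acting on the remaining $k-1$ i.i.d.\ trials, and it wins only by stopping on the last success among them. Its conditional win probability is therefore at most the oracle value for horizon $k-1$. Since $m(p)=k>k-1$, that oracle stops on the first success, so the value is $g(k-1)=(k-1)p(1-p)^{k-2}$, by Lemma~\ref{lem:gfacts} and the monotonicity of $g$ up to $m$.
\item[4.] The conditional gain on $B$ is then at least
\[
(1-p)^{k-1}-(k-1)p(1-p)^{k-2}=(1-p)^{k-2}(1-kp)=k(1-p)^{k-2}\bigl(\tfrac1k-p\bigr),
\]
and the gain is zero off $B$. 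Taking expectations gives (i).
\end{itemize}

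\emph{Part (ii).} Here $p\in(\frac1k,\frac1{k-1})$, so that $m(p)=k-1$ and the oracle does not stop at $t_k$.
\begin{itemize}
\item[1.] Let $A:=\{X_{t_k}=1,\ \tau_n^\pi=t_k\}\in\mathcal F_{t_k}$. Define $\tilde\pi$ to agree with $\pi$ off $A$; on $A$ it does not stop at $t_k$ and instead stops at the first success (if any) in $\{t_k+1,\ldots,n\}$. Again, $\{\tilde\tau_n\le t\}$ is $\mathcal F_t$-measurable for every $t$.
\item[2.] On $A$, $\pi$ wins with conditional probability $(1-p)^{k-1}$, while $\tilde\pi$ wins with conditional probability $g(k-1)=(k-1)p(1-p)^{k-2}$.
\item[3.] The difference is
\[
(1-p)^{k-2}(kp-1)=k(1-p)^{k-2}\bigl(p-\tfrac1k\bigr)\ \ge 0,
\]
and combining with $V_n(p)\ge W_n^{\tilde\pi}(p)$ gives (ii).
\end{itemize}

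The only step that needs care is step~3 of part (i): bounding the continuation value of an arbitrary randomized rule given $\mathcal F_{t_k}$. The point is that, conditionally on $\mathcal F_{t_k}$ (which includes $U$), the residual rule is a deterministic stopping rule on the fresh i.i.d.\ block $X_{t_k+1},\ldots,X_n$. The sum-the-odds theorem for horizon $k-1$ then caps its win probability at $V_{k-1}(p)=g(k-1)$. This is the same reasoning used in the proof of Proposition~\ref{prop:no_p_equality_n_ge_6}(iii).
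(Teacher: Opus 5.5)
Your proposal is correct and is essentially the paper's own proof. The paper modifies $\pi$ on the same $\mathcal F_{t_k}$-measurable events $\{X_{t_k}=1,\ \tau_n^\pi\ge t_k+1\}$ and $\{X_{t_k}=1,\ \tau_n^\pi=t_k\}$, compares the conditional win probabilities $(1-p)^{k-1}$ and $g(k-1)$, and concludes via $V_n(p)\ge W_n^{\tilde\pi}(p)$; in part (i) it bounds the continuation value of $\pi$ by $\max_{1\le\ell\le k-1}g(\ell)=g(k-1)$, which is the same bound as your $V_{k-1}(p)$.
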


\begin{proof}
The argument is that of Lemma~\ref{LemmaLowerBound}, with~$n-1$ replaced by~$t_k$; we only record the two conditional comparisons, the switching construction and the measurability checks being identical.

(i) Here~$m(p)=k$, so the oracle stops at the first success in~$\{t_k,\ldots,n\}$. On~$B:=\{X_{t_k}=1,\ \tau_n^\pi\ge t_k+1\}\in\mathcal F_{t_k}$, the rule~$\tilde\pi$ that stops at~$t_k$ instead wins if and only if~$X_{t_k+1}=\cdots=X_n=0$, an event of probability~$(1-p)^{k-1}$; whereas~$\pi$, which can then only stop within~$\{t_k+1,\ldots,n\}$, wins with conditional probability at most~$\max_{1\le\ell\le k-1}g(\ell)=g(k-1)$, the maximum being at~$\ell=k-1$ because~$g$ increases up to~$\ell=m(p)=k$ by Lemma~\ref{lem:gfacts}. Hence, on~$B$, the conditional gain of~$\tilde\pi$ over~$\pi$ is at least
$$
(1-p)^{k-1}-(k-1)p(1-p)^{k-2}
=
(1-p)^{k-2}(1-kp)
=
k(1-p)^{k-2}\Bigl(\frac1k-p\Bigr)
\ (\ge0)
.
$$

(ii) Here~$m(p)=k-1$, so the oracle does \emph{not} stop at~$t_k$. On~$B:=\{X_{t_k}=1,\ \tau_n^\pi=t_k\}$, the rule that continues and applies the oracle prescription on~$\{t_k+1,\ldots,n\}$ wins with conditional probability~$g(k-1)$, against~$(1-p)^{k-1}$ for~$\pi$, a gain of
$$
(k-1)p(1-p)^{k-2}-(1-p)^{k-1}
=
(1-p)^{k-2}(kp-1)
=
k(1-p)^{k-2}\Bigl(p-\frac1k\Bigr)
\ (\ge0)
.
$$
In both cases the conditional gain vanishes off~$B$, and taking expectations gives the announced bounds.
\end{proof}

\begin{proof}[Proof of Proposition~\ref{prop:local_at_1k}]
Write~$p_n:=p_{n,k}(u)$ and~$\sigma_p:=\sqrt{p(1-p)}$.
\vspace{2mm}

\emph{(i) The plug-in rule.}
For~$u=0$ we have~$p_n=\frac1k$, so that~$\Delta_+=0$ while~$\Delta_-=\frac1k-\frac{1}{k+1}$ is bounded away from~$0$; by Step~4(i) in the proof of Theorem~\ref{thm:plugin_sharp}, $V_n(p_n)-W_n(p_n)=O(e^{-cn})$ and both sides of~(\ref{eq:local_plugin_risk}) vanish. Let then~$u\ne0$. For~$n$ large, $p_n$ lies in the cell~$[\frac{1}{k+1},\frac1k)$ if~$u<0$ and in~$[\frac1k,\frac{1}{k-1})$ if~$u>0$, so that
\begin{eqnarray*}
m(p_n)=k
&\ \textrm{ and }\ &
\Delta_+=\tfrac1k-p_n=\tfrac{|u|\sigma_k}{\sqrt n}
\qquad\ \ (u<0)
,
\\[2mm]
m(p_n)=k-1
&\ \textrm{ and }\ &
\Delta_-=p_n-\tfrac1k=\tfrac{|u|\sigma_k}{\sqrt n}
\qquad\ \ (u>0)
,
\end{eqnarray*}
while the distance to the other endpoint of the cell stays bounded away from~$0$. By Step~4(i) in that same proof, the corresponding term~$D_\mp(p_n)$ is~$O(e^{-cn})$, so that~(\ref{eq:sharp_two_terms}) gives
$$
V_n(p_n)-W_n(p_n)
=
K(p_n)\,\frac{|u|\sigma_k}{\sqrt n}\,\Pi_n
+
O(e^{-cn})
,
$$
where~$K:=K_+$ and~$\Pi_n:=\mathbb P_{p_n}[S_{t_+-1}\ge b_+]$ if~$u<0$, and~$K:=K_-$ and~$\Pi_n:=\mathbb P_{p_n}[S_{t_--1}\le b_--1]$ if~$u>0$. In either case~$K(p_n)\to(1-\frac1k)^{k-2}$: indeed~$K_+(\frac1k)=k\cdot\frac1k(1-\frac1k)^{k-2}$ with~$m=k$, and~$K_-(\frac1k)=k\cdot\frac1k(1-\frac1k)^{k-2}$ with~$m=k-1$, as already noted after~(\ref{eq:gammak}). Moreover the Berry--Esseen argument of Step~4(iii) there applies, since~$\sqrt n\Delta_\pm=|u|\sigma_k$ is bounded, and yields
$$
\Pi_n
=
\Phi\Bigl(-\frac{\sqrt n\,\Delta_\pm}{\sigma_{p_n}}\Bigr)+O(n^{-1/2})
=
\Phi(-|u|)+o(1)
,
$$
because~$\sqrt n\Delta_\pm/\sigma_{p_n}=|u|\sigma_k/\sigma_{p_n}\to|u|$. Multiplying by~$\sqrt n$ gives
$$
\sqrt n\bigl(V_n(p_n)-W_n(p_n)\bigr)
\to
\Bigl(1-\frac1k\Bigr)^{k-2}\sigma_k |u| \Phi(-|u|)
=
\gamma_k |u| \Phi(-|u|)
,
$$
by the definition~(\ref{eq:gammak}) of~$\gamma_k$. This is~(\ref{eq:local_plugin_risk}).
\vspace{2mm}

\emph{(ii) The local minimax bound.}
Fix~$h>0$, let~$c\ge h$, put~$p_n^\pm:=p_{n,k}(\pm h)$ and
$$
R_n:=\sqrt n\sup_{|u|\le c}\bigl(V_n(p_{n,k}(u))-W_n^{\pi_n}(p_{n,k}(u))\bigr)
.
$$
As in the proof of Theorem~\ref{thm:minimax_sharp}, we may assume that~$\sup_nR_n=:K_0<\infty$.
\vspace{2mm}

\emph{Step 1: the rule cannot stop early.} Let~$q_n:=\mathbb P_{p_n^-}[\tau_n\le n-k]$ and write~$p:=p_n^-$. By~(\ref{eq:Wpi_def}), a win requires~$X_{\tau_n}=1$, so that stopping at a time~$t$ with~$X_t=0$ can only decrease the win probability; since~$\{\tau_n=t,X_t=1\}$ is independent of~$(X_{t+1},\ldots,X_n)$,
$$
\mathbb P_p[\pi_n\textrm{ wins},\, \tau_n\le n-k]
=
\sum_{t=1}^{n-k}\mathbb P_p[\tau_n=t,X_t=1]
(1-p)^{n-t}
\ \le\
(1-p)^{k}q_n
,
$$
because~$(1-p)^{n-t}\le(1-p)^{k}$ for~$t\le n-k$. On~$\{\tau_n\ge t_k\}$, in turn, the rule can only stop within the~$k$ times~$\{t_k,\ldots,n\}$, so that its conditional win probability is at most the oracle value of the last-success problem on~$k$ i.i.d.\ trials, namely~$\max_{1\le\ell\le k}g(\ell)=g(k)=V_n(p)$, since~$m(p)=k$. Hence,
$$
W_n^{\pi_n}(p) \leq q_n(1-p)^{k}+(1-q_n)V_n(p)
,
$$
so that~$V_n(p)-W_n^{\pi_n}(p)\ge q_n
\{
V_n(p)-(1-p)^{k}
\}$. As
$$
V_n(p)-(1-p)^{k}
=
(1-p)^{k-1}\bigl(kp-(1-p)\bigr)
\to
\tfrac1k\bigl(1-\tfrac1k\bigr)^{k-1}>0
$$
as~$p\to\frac1k$, there are~$C(k)$ and~$n_0(k,h)$ such that~$q_n\le C(k)R_n/\sqrt n$ for all~$n\ge n_0(k,h)$; since~$\sup_nR_n<\infty$, this gives~$q_n\to0$.
\vspace{2mm}

\emph{Step 2: reduction to a test at~$t_k$.} Let~$\psi_p:=\mathbb P_p[\tau_n=t_k | X_{t_k}=1]$ denote the conditional probability that the rule stops at the critical time when a success occurs there. Since~$t_k=n-k+1$, the events~$\{\tau_n\le n-k\}$, $\{\tau_n=t_k\}$ and~$\{\tau_n\ge t_k+1\}$ partition the sample space, so that
$$
\mathbb P_p[X_{t_k}=1,\tau_n\ge t_k+1]
=
\mathbb P_p[X_{t_k}=1]
-
\mathbb P_p[X_{t_k}=1,\tau_n\le n-k]
-
\mathbb P_p[X_{t_k}=1,\tau_n=t_k]
.
$$
Here, $\mathbb P_p[X_{t_k}=1]=p$, while~$\{\tau_n\le n-k\}\in\sigma(X_1,\ldots,X_{n-k},U)$ is independent of~$X_{t_k}$, so that~$\mathbb P_p[X_{t_k}=1,\tau_n\le n-k]=p\mathbb P_p[\tau_n\le n-k]$, and~$\mathbb P_p[X_{t_k}=1,\tau_n=t_k]=p\psi_p$ by definition of~$\psi_p$. Therefore,
$$
\mathbb P_p[X_{t_k}=1,\tau_n\ge t_k+1]=p\bigl(1-\psi_p-\mathbb P_p[\tau_n\le n-k]\bigr)
,
\quad
\mathbb P_p[X_{t_k}=1,\tau_n=t_k]
=
p\psi_p
.
$$
Lemma~\ref{lem:switch_k}(i)--(ii), applied at~$p_n^-$ and~$p_n^+$ respectively, therefore gives
$$
V_n(p_n^-)-W_n^{\pi_n}(p_n^-)\ \ge\ A_n^-\bigl(1-\psi_{p_n^-}-q_n\bigr)
,
\qquad
V_n(p_n^+)-W_n^{\pi_n}(p_n^+)\ \ge\ A_n^+\,\psi_{p_n^+}
,
$$
with~$A_n^\pm:=k(1-p_n^\pm)^{k-2}|p_n^\pm-\frac1k|\,p_n^\pm$. Since~$|p_n^\pm-\frac1k|=h\sigma_k/\sqrt n$ and~$p_n^\pm\to\frac1k$,
$$
A_n^\pm
=
k\Bigl(1-\frac1k\Bigr)^{k-2}\frac{h\sigma_k}{\sqrt n}\cdot\frac1k\,(1+o(1))
=
\frac{\gamma_k h}{\sqrt n}\,(1+o(1))
,
$$
again by~(\ref{eq:gammak}). As each of the two deficits is at most~$R_n/\sqrt n$, adding them yields
$$
\frac{2R_n}{\sqrt n}
  \geq  \frac{\gamma_k h}{\sqrt n}\bigl(1+o(1)\bigr)\bigl\{
\psi_{p_n^+}+1-\psi_{p_n^-}-q_n
\bigr\}
.
$$

\emph{Step 3: Le~Cam's bound.} Exactly as in Step~3 of the proof of Theorem~\ref{thm:minimax_sharp}, conditionally on~$\{X_{t_k}=1\}$, the event~$\{\tau_n=t_k\}$ is measurable with respect to~$\sigma(X_1,\ldots,X_{n-k},U)$, so that~$\psi_p$ is the power of a test based on~$(X_1,\ldots,X_{n-k},U)$ and
$$
\psi_{p_n^+}+1-\psi_{p_n^-}\ \ge\ 1-\mathrm{TV}(Q_n^+,Q_n^-) \to 2\Phi(-h)
$$
by Lemma~\ref{lem:LAN}(ii), applied at the boundary~$\frac1k$ with~$N=n-k$. With~$q_n\to0$ from Step~1, this gives~$\liminf_nR_n\ge\frac{\gamma_kh}{2}\cdot2\Phi(-h)=\gamma_kh\Phi(-h)$.
\vspace{2mm}

\emph{Step 4: optimization.} Letting~$c\to\infty$ and taking the supremum over~$h>0$ yields~(\ref{eq:local_minimax_1k}), since~$\sup_{h>0}h\Phi(-h)=2C_\star$ by Lemma~\ref{lem:gammamax}. The bound is attained: for~$c\ge u_\star$, part~(i) gives
$$
\sup_{|u|\le c}\ \lim_{n\to\infty}\sqrt n\bigl(V_n(p_{n,k}(u))-W_n(p_{n,k}(u))\bigr)
=
\sup_{|u|\le c}\gamma_k|u|\Phi(-|u|)
=
2\gamma_kC_\star
$$
for the plug-in rule.
\end{proof}

Theorem~\ref{thm:minimax_sharp} is now a consequence of Theorem~\ref{thm:plugin_sharp} and of Proposition~\ref{prop:local_at_1k} at the boundary~$\frac12$.

\begin{proof}[Proof of Theorem~\ref{thm:minimax_sharp}]
Since~$\hat\pi$ is \mbox{$p$-blind}, Theorem~\ref{thm:plugin_sharp} entails that
$$
\limsup_{n\to\infty}
\ \sqrt n\,
\inf_\pi\sup_{p\in[p_0,1)}\bigl(V_n(p)-W_n^\pi(p)\bigr)
\leq
C_\star
,
$$
and it is therefore sufficient to establish the corresponding lower bound. To do so, let~$(\pi_n)$ be an arbitrary sequence of (possibly randomized) \mbox{$p$-blind} rules and fix~$c>0$. Since~$p_0<\frac12$, we have~$p_{n,2}(u)\in[p_0,1)$ for all~$|u|\le c$ and all~$n$ large, so that
$$
\sqrt n\sup_{p\in[p_0,1)}\bigl(V_n(p)-W_n^{\pi_n}(p)\bigr)
  \geq  \sqrt n\sup_{|u|\le c}\bigl(V_n(p_{n,2}(u))-W_n^{\pi_n}(p_{n,2}(u))\bigr)
.
$$
Taking~$\liminf_n$, then letting~$c\to\infty$, and applying Proposition~\ref{prop:local_at_1k}(ii) with~$k=2$, for which~$\gamma_2=\frac12$ by~(\ref{eq:gammak}), we obtain
$$
\liminf_{n\to\infty}\ \sqrt n\sup_{p\in[p_0,1)}\bigl(V_n(p)-W_n^{\pi_n}(p)\bigr)
  \geq  2\gamma_2C_\star
=
C_\star
,
$$
which, $(\pi_n)$ being arbitrary, provides the announced lower bound.
\end{proof}





\section{Proofs for Section~4}
\label{secProofsSampleSplit}
We first establish the closed-form win probability of~$\pi_a$ announced in Section~\ref{sec:splitting}, which is the explicit form of the compact representation~(\ref{eq:Wa_compact}).

\begin{thm}
\label{thm:exact_Wa}
Let~$a\in(0,1)$ and~$n\ge2$ be such that~$1\le t_a\le n-1$, and let the~$L_j$ be as in~(\ref{eq:La_def}). Then, for any~$p\in(0,1)$,
\begin{equation}
\label{eq:Wa_exact}
W_n^{\pi_a}(p)
=
\sum_{j=0}^{t_a}
\binom{t_a}{j}p^j(1-p)^{t_a-j}
L_jp(1-p)^{L_j-1}
.
\end{equation}
In particular, $p\mapsto W_n^{\pi_a}(p)$ is a polynomial and can be evaluated in~$O(n)$ arithmetic operations.
\end{thm}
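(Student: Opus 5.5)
The plan is to condition on the frozen estimate and reduce, on each event $\{S_{t_a}=j\}$, to a deterministic threshold rule acting on i.i.d.\ data independent of the past; this is the compact representation~(\ref{eq:Wa_compact}), which we now justify carefully and then expand.

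First, fix $j\in\{0,\ldots,t_a\}$ and work on $\{S_{t_a}=j\}$, so that $\hat p^{(a)}=j/t_a$. For $t\in\{t_a+1,\ldots,n-1\}$, the condition $\hat p^{(a)}<1/(n-t+1)$ reads $j(n-t+1)<t_a$. For $j=0$ this always holds. For $j\ge1$ it is equivalent to $n-t+1<t_a/j$, that is, $n-t+1\le\lceil t_a/j\rceil-1$. Since $t\mapsto n-t+1$ is decreasing, the set of admissible $t$ is an upward-closed final segment of $\{t_a+1,\ldots,n-1\}$. Adding the terminal clause at $t=n$, the set of times at which $\pi_a$ stops on a success is exactly $\{n-L_j+1,\ldots,n\}$, where $L_j$ is given by~(\ref{eq:La_def}):
\begin{itemize}
\item the cap at $n-t_a$ reflects the constraint $t\ge t_a+1$;
\item the floor at $1$ reflects the terminal clause.
\end{itemize}
This case analysis, including the convention $L_0=n-t_a$ and the degenerate case where the window has length one, is the only step needing care. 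It is a routine check, not a real obstacle.

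Second, on $\{S_{t_a}=j\}$ the rule stops at the first success in the last $L_j$ trials. These trials are among $X_{t_a+1},\ldots,X_n$, which are independent of $\sigma(X_1,\ldots,X_{t_a})$. The rule therefore wins if and only if exactly one success occurs in that block. By~(\ref{eq:gdef}), this gives conditional win probability $g(L_j)=L_jp(1-p)^{L_j-1}$. The total probability formula then yields $W_n^{\pi_a}(p)=\mathbb E_p[g(L_{S_{t_a}})]$, and expanding against the $\mathrm{Bin}(t_a,p)$ mass function gives~(\ref{eq:Wa_exact}).

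Finally, the complexity and polynomial claims follow directly from~(\ref{eq:Wa_exact}). Each $L_j$ is an integer independent of $p$, so every summand is a polynomial in $p$. There are $t_a+1\le n$ summands. The binomial weights and the powers can be updated recursively in $j$, or precomputed for exponents up to $n$, so the whole sum costs $O(n)$ arithmetic operations.
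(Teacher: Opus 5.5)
Your proof is correct and follows the paper's argument essentially step for step. You condition on $\{S_{t_a}=j\}$, rewrite the stopping condition as $n-t+1\le\lceil t_a/j\rceil-1$ to identify the terminal block of length $L_j$, use the independence of $X_{t_a+1},\ldots,X_n$ from $S_{t_a}$ to obtain the conditional win probability $g(L_j)$, and sum against the $\mathrm{Bin}(t_a,p)$ weights. Your justification of the $O(n)$ cost, via recursively updated binomial weights and precomputed powers, also fills in a detail the paper only asserts.
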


\begin{proof}
Condition on~$S_{t_a}=j$, so that~$\hat p^{(a)}=j/t_a$ and~$m(\hat p^{(a)})=\lceil t_a/j\rceil-1=:\hat m_j$ for~$j\ge1$, with~$\hat m_0:=+\infty$. Since~$n-t+1$ is an integer and~$m<r$ is equivalent to~$m\le\lceil r\rceil-1$ for integer~$m$, the condition~$\hat p^{(a)}<1/(n-t+1)$ is equivalent to~$n-t+1\le \hat m_j$, that is, to~$t\ge n-\hat m_j+1$. Hence, on~$\{S_{t_a}=j\}$, the rule~$\pi_a$ stops at the first success (if any) in the terminal block
$$
A_j
=
\bigl\{\max\{t_a+1,\,n-\hat m_j+1\},\ldots,n\bigr\}
\quad
(\textrm{and } A_j=\{n\} \textrm{ if } \hat m_j=0)
,
$$
of cardinality~$|A_j|=L_j$, and it wins if and only if that success is the last one of the whole sequence, that is, if and only if~$A_j$ contains exactly one success. Since~$A_j$ is determined by~$S_{t_a}$ and since~$A_j\subseteq\{t_a+1,\ldots,n\}$, with $(X_t)_{t>t_a}$ independent of~$S_{t_a}$, we obtain
$$
\mathbb P_p[\pi_a \textrm{ wins}|S_{t_a}=j]
=
L_j p (1-p)^{L_j-1}
.
$$
The result follows by summing over~$j$ against the~${\rm Bin}(t_a,p)$ probability mass function of~$S_{t_a}$.
\end{proof}

Throughout this appendix, $m=m(p)=\lceil 1/p\rceil-1$, $M_0:=\lceil 1/p_0\rceil-1$, and~$g$, $K_\pm$, $\Delta_\pm$ are as in~(\ref{eq:gdef})--(\ref{eq:g_gap_plus}); we freely use Lemmas~\ref{lem:gfacts}--\ref{lem:gammamax} of Appendix~\ref{appProofsConstants}.

\begin{proof}[Proof of Theorem~\ref{thm:oracle_split}]
(i) Write~$t_a=\lfloor an\rfloor$ and~$\hat p=\hat p^{(a)}$, and let~$n_0=n_0(p_0,a)$ be large enough that~$n-t_a\ge M_0+1$ and~$t_a\ge1$ for~$n\ge n_0$; for~$n<n_0$, the bound~(\ref{eq:split_pointwise}) holds trivially by enlarging~$C_2$, since the left-hand side is at most~$1$. Let~$n\ge n_0$ and~$p\in[p_0,1)$, and set~$m=m(p)\le M_0$.

By~(\ref{eq:deficit_as_expectation}),
\begin{equation}
\label{eq:split_decomp}
V_n(p)-W_n^{\pi_a}(p)
=
\mathbb E_p[g(m)-g(L_{S_{t_a}})]
=
\sum_{\ell\ne m}
(g(m)-g(\ell))\,\mathbb P_p[L_{S_{t_a}}=\ell]
,
\end{equation}
where every summand is nonnegative, since~$g(m)=\max_{\ell\ge1}g(\ell)$ by Lemma~\ref{lem:gfacts}. It is therefore enough to bound the summands separately.
Recall from~(\ref{eq:La_def}) that~$L_{S_{t_a}}=\max\{1,\min\{n-t_a,m(\hat p)\}\}$, with the convention~$m(0)=+\infty$. Since~$m\le M_0\le n-t_a-1$ and~$m\ge1$, we have~$L_{S_{t_a}}=m$ on the event~$\{m(\hat p)=m\}$, so that only the following three groups of terms contribute to~(\ref{eq:split_decomp}).

\emph{(a) The term~$\ell=m+1$.} It requires~$m(\hat p)=m+1$, hence~$\hat p<\frac{1}{m+1}$, that is, $\hat p-p<-\Delta_-$. By Lemma~\ref{lem:gfacts} and Hoeffding's inequality applied to the~$t_a$ i.i.d.\ Bernoulli variables~$X_1,\ldots,X_{t_a}$,
\begin{eqnarray*}
\bigl(g(m)-g(m+1)\bigr)\mathbb P_p[L_{S_{t_a}}=m+1]
\!\!&\!\! \leq \!\! &\!\!
(m+1)p(1-p)^{m-1}\Delta_-\,e^{-2t_a\Delta_-^2}
\\[2mm]
\!\!&\!\! \leq \!\! &\!\!
C(p_0)\,\Delta_-\,e^{-2t_a\Delta_-^2}
,
\end{eqnarray*}
where we used~$(m+1)p\le(M_0+1)$.

\emph{(b) The term~$\ell=m-1$ (only if~$m\ge2$).} It requires~$\hat p\ge\frac1m$, that is, $\hat p-p\ge\Delta_+$, and Lemma~\ref{lem:gfacts} together with Hoeffding's inequality gives, in the same way,
$$
\bigl(g(m)-g(m-1)\bigr)\mathbb P_p[L_{S_{t_a}}=m-1]
\le
C(p_0)\,\Delta_+\,e^{-2t_a\Delta_+^2}
.
$$

\emph{(c) The remaining terms.} These require either~$|m(\hat p)-m|\ge2$, or~$m(\hat p)>n-t_a$ (so that the clipping at~$n-t_a$ is active), or~$m(\hat p)=0$ (i.e.\ $\hat p=1$). By Lemma~\ref{lem:cellsep} the first case forces~$|\hat p-p|\ge c_0(p_0)$; the second forces~$\hat p<\frac{1}{n-t_a}\le\frac{p_0}{2}$ for~$n$ large, hence~$|\hat p-p|\ge\frac{p_0}{2}$; as for the third, it gives~$L_{S_{t_a}}=1$, which is a term of~(c) only when~$1\notin\{m-1,m,m+1\}$, that is, only when~$m\ge3$, hence only when~$p<\frac13$; there, $\mathbb P_p[\hat p=1]=p^{t_a}<3^{-t_a}$, which is again exponentially small. Since~$g(m)-g(\ell)\le1$ throughout, Hoeffding's inequality bounds the total contribution of these terms by~$C_2(p_0,a)e^{-c(p_0,a)n}$, using~$t_a\ge an-1$.

It remains to combine (a)--(c). By Lemma~\ref{lem:cellsep}, $\max\{\Delta_-,\Delta_+\}\ge c_1(p_0)$; the corresponding term among (a)--(b) is therefore bounded by~$e^{-2t_ac_1(p_0)^2}\le C_2 e^{-cn}$ and can be absorbed into~(c). The other term involves~$\min\{\Delta_-,\Delta_+\}$. Finally, since~$\mathcal B_{p_0}=\{\frac1k:k=2,\ldots,M_0+1\}$ contains~$\frac{1}{m+1}$ for~$m\le M_0$, and contains~$\frac1m$ whenever~$m\ge2$, we have~$\Delta_p=\min\{\Delta_-,\Delta_+\}$ when~$m\ge2$, whereas~$\Delta_p=\Delta_-=p-\frac12$ when~$m=1$ (the value~$\frac{1}{m}=1$ not belonging to~$\mathcal B_{p_0}$); in the latter case the term~(b) is absent. In all cases,
$$
V_n(p)-W_n^{\pi_a}(p)
\le
C_1(p_0)\,\Delta_p\,e^{-2t_a\Delta_p^2}
+
C_2(p_0,a)\,e^{-c(p_0,a)n}
,
$$
which is~(\ref{eq:split_pointwise}).

(ii) Since~$\sup_{x\ge0}xe^{-2t_ax^2}=1/(2\sqrt{e t_a})$, taking the supremum over~$p\in[p_0,1)$ in~(\ref{eq:split_pointwise}) yields
$$
\sup_{p\in[p_0,1)}\bigl(V_n(p)-W_n^{\pi_a}(p)\bigr)
\le
\frac{C_1(p_0)}{2\sqrt{e\,t_a}}+C_2(p_0,a)e^{-c(p_0,a)n}
.
$$
For~$n$ large enough, $t_a=\lfloor an\rfloor\ge an/2$, and the exponential term is absorbed into the first one by enlarging the constant; this establishes~(\ref{eq:split_uniform}).
\end{proof}

\begin{proof}[Proof of Theorem~\ref{thm:split_sharp}]
Throughout, we write $t:=\lfloor an\rfloor$, $\hat p:=\hat p^{(a)}=S_t/t$, $m=m(p)$, $\sigma_p:=\sqrt{p(1-p)}$, and
$$
K_-(p):=(m+1)p(1-p)^{m-1}
,
\qquad
K_+(p):=mp(1-p)^{m-2}
\ \ (m\ge2)
,
$$
so that, by Lemma~\ref{lem:gfacts}, $g(m)-g(m+1)=K_-(p)\Delta_-$ and~$g(m)-g(m-1)=K_+(p)\Delta_+$, with~$\Delta_-=p-\frac{1}{m+1}$ and~$\Delta_+=\frac1m-p$. Since~$\sqrt{t}=\sqrt{an}(1+o(1))$, it is equivalent to prove~(\ref{eq:split_sharp}), as well as the characterization of the least favourable sequences, with~$\sqrt{an}$ replaced by~$\sqrt t$.
\vspace{2mm}

\emph{Part 1: the lower bound.}
Throughout Parts~1 and~3, we write~$h(u):=u\Phi(-u)$, which, by Lemma~\ref{lem:gammamax}, is continuous on~$[0,\infty)$, vanishes at~$0$ and at infinity, and attains its maximum~$h(u_\star)=2C_\star$ at the single point~$u_\star$.

Fix~$u>0$ and let~$(p_n)$ be any~$[p_0,1)$-valued sequence with~$\sqrt t\,|p_n-\frac12|\to\frac u2$; write~$\delta_n:=|p_n-\frac12|$, so that~$2\sqrt t\,\delta_n\to u$ and, for~$n$ large, $p_n\in(\max\{p_0,\frac13\},1)\setminus\{\frac12\}$. Both possible positions of~$p_n$ relative to~$\frac12$ involve the same coefficient~$2p_n$ and the same threshold~$b_n:=\lceil t/2\rceil$. Indeed, by Lemma~\ref{lem:gfacts}: if~$p_n<\frac12$, then~$m(p_n)=2$ and~$g(2)-g(1)=p_n(1-2p_n)=2p_n\delta_n$, while~$\pi_a$ uses~$L_{S_t}=1$ on~$\mathcal A_n:=\{S_t\ge b_n\}$, since~$L_j=1$ if and only if~$j/t\ge\frac12$; if~$p_n>\frac12$, then~$m(p_n)=1$ and~$g(1)-g(2)=p_n(2p_n-1)=2p_n\delta_n$, while~$\pi_a$ uses~$L_{S_t}=2$ on~$\mathcal A_n:=\{\lceil t/3\rceil\le S_t\le b_n-1\}$, since~$L_j=2$ if and only if~$\frac13\le j/t<\frac12$ (recall that~$n-t\ge M_0+1\ge2$ for~$n$ large). Since all terms in~(\ref{eq:split_decomp}) are nonnegative, retaining the relevant one gives, in both cases,
$$
V_n(p_n)-W_n^{\pi_a}(p_n)
  \geq  2p_n\delta_n\,\mathbb P_{p_n}[\mathcal A_n]
.
$$
Moreover, $\mathbb P_{p_n}[S_t<\lceil t/3\rceil]\le e^{-ct}$ by Hoeffding's inequality, so that~$\mathbb P_{p_n}[\mathcal A_n]=\mathbb P_{p_n}[S_t\le b_n-1]-O(e^{-ct})$ in the second case. Slud's inequality \citep[Theorem~2.1]{Slud1977} applies to~$S_t\sim{\rm Bin}(t,p_n)$ in the first case and, upon rewriting~$\{S_t\le b_n-1\}=\{t-S_t\ge t-b_n+1\}$, to~$t-S_t\sim{\rm Bin}(t,1-p_n)$ in the second; in either case the success parameter is~$\le\frac12$, the relevant threshold lies between the mean and~$t$ minus the mean for~$n$ large, and it exceeds that mean by~$t\delta_n+O(1)=\frac{u\sqrt t}{2}+O(1)$, while the corresponding standard deviation is~$\sqrt{tp_n(1-p_n)}=\frac{\sqrt t}{2}(1+o(1))$. Hence~$\mathbb P_{p_n}[\mathcal A_n]\ge1-\Phi(u+o(1))=\Phi(-u)+o(1)$ and, since~$2p_n\delta_n\sqrt t\to\frac u2$,
$$
\liminf_{n\to\infty}\sqrt t\,\bigl(V_n(p_n)-W_n^{\pi_a}(p_n)\bigr)
  \geq  \tfrac12 h(u)
.
$$
The two cases differ only in the direction of the misclassification of~$m(p_n)$ by the frozen estimate; the barrier is thus two-sided. Applying the above to~$p_n=\frac12-\frac{u}{2\sqrt t}$ and taking the supremum over~$u>0$ gives
$$
\liminf_n\sqrt t\sup_{p\in[p_0,1)}(V_n(p)-W_n^{\pi_a}(p))\ge\frac12\sup_{u>0}h(u)=C_\star.
$$
\vspace{0mm}

\emph{Part 2: the upper bound.}
Fix~$\varepsilon>0$. Exactly as in steps~(a)--(c) of the proof of Theorem~\ref{thm:oracle_split}, for all~$n$ large and all~$p\in[p_0,1)$,
\begin{equation}
\label{eq:sharp_reduction}
V_n(p)-W_n^{\pi_a}(p)
\ \le\
T_-(p)+T_+(p)+C_2e^{-cn}
,
\end{equation}
where~$T_-(p):=K_-(p)\Delta_-\,\mathbb P_p[\hat p-p<-\Delta_-]$ and~$T_+(p):=K_+(p)\Delta_+\,\mathbb P_p[\hat p-p\ge\Delta_+]$, with~$T_+:=0$ when~$m=1$. Let~$C_3(p_0):=\sup\{\max(K_-(p),K_+(p)):p\in[p_0,1)\}<\infty$, which is finite since~$m\le M_0$.

\emph{(i) Contributions from distances~$\ge \Lambda/\sqrt t$.} Let~$\Lambda\ge1$. If~$\Delta_-\ge \Lambda/\sqrt t$, Hoeffding's inequality gives $T_-(p)\le C_3(p_0)\Delta_-e^{-2t\Delta_-^2}$; as~$x\mapsto xe^{-2tx^2}$ is nonincreasing on~$[\frac{1}{2\sqrt t},\infty)\ni \Lambda/\sqrt t$, we get
$$
T_-(p) \leq \frac{C_3(p_0)\,\Lambda\,e^{-2\Lambda^2}}{\sqrt t}
,
$$
and similarly for~$T_+$. Choose once and for all~$\Lambda=\Lambda(\varepsilon)$ so large that~$C_3(p_0)Me^{-2\Lambda^2}
\linebreak
\le\varepsilon$; the constants implied by the~$O(\cdot)$'s below may depend on~$\Lambda$.

\emph{(ii) At most one small distance.} By Lemma~\ref{lem:cellsep}, $\Delta_-+\Delta_+\ge\frac{1}{M_0(M_0+1)}$, which exceeds~$2\Lambda/\sqrt t$ for~$n$ large; hence, at most one of~$\Delta_-,\Delta_+$ is~$<\Lambda/\sqrt t$. If none is, then~(\ref{eq:sharp_reduction}) and~(i) give $V_n(p)-W_n^{\pi_a}(p)\le2\varepsilon/\sqrt t+C_2e^{-cn}$.

\emph{(iii) One small distance.} Assume~$\Delta_-<\Lambda/\sqrt t$, the case~$\Delta_+<\Lambda/\sqrt t$ being identical upon exchanging the roles of~$K_-,\Delta_-$ and~$K_+,\Delta_+$. By~(i), $T_+(p)\le\varepsilon/\sqrt t$. Moreover, $p$ lies within~$\Lambda/\sqrt t$ of~$q:=\frac{1}{m+1}\in[\frac{1}{M_0+1},\frac12]$, so that, for~$n$ large, $p$ belongs to the compact interval~$J:=[\frac{1}{2(M_0+1)},\frac34]\subset(0,1)$, on which~$\sigma_p\ge\sigma_0>0$. Since the~$X_i$ are i.i.d.\ Bernoulli$(p)$ with~$\mathbb E|X_1-p|^3\le p(1-p)$, the Berry--Esseen theorem provides~$A=A(p_0)$ with
$$
\sup_{x\in\mathbb R}
\biggl|\mathbb P_p\biggl[\frac{S_t-tp}{\sigma_p\sqrt t}\le x\biggr]-\Phi(x)\biggr|
\ \le\
\frac{A}{\sqrt t}
\qquad\text{for all }p\in J
.
$$
Consequently, $\mathbb P_p[\hat p-p<-\Delta_-]\le\Phi(-\Delta_-\sqrt t/\sigma_p)+A/\sqrt t$, whence, using $\sup_{u\ge0}u\Phi(-u)=2C_\star$ with~$u=\Delta_-\sqrt t/\sigma_p$ and~$\Delta_-\le \Lambda/\sqrt t$,
$$
T_-(p)
\ \le\
\frac{K_-(p)\sigma_p}{\sqrt t}
\cdot
\frac{\Delta_-\sqrt t}{\sigma_p}\Phi\Bigl(-\frac{\Delta_-\sqrt t}{\sigma_p}\Bigr)
+
K_-(p)\Delta_-\frac{A}{\sqrt t}
\ \le\
\frac{2C_\star K_-(p)\sigma_p}{\sqrt t}
+
\frac{C_3(p_0)MA}{t}
.
$$
Finally, $K_-$ and~$\sigma_{\cdot}$ are Lipschitz on~$J$ and~$|p-q|<\Lambda/\sqrt t$, so that
$$
K_-(p)\sigma_p
=
K_-(q)\sigma_q+O(1/\sqrt t)
=
\gamma_{m+1}+O(1/\sqrt t)
\le
\frac12+O(1/\sqrt t)
,
$$
where we used~$K_-(q)\sigma_q=(m+1)q(1-q)^{m-1}\sqrt{q(1-q)}=\gamma_{m+1}$ for~$q=\frac{1}{m+1}$, together with Lemma~\ref{lem:gammamax}. Therefore,
$$
T_-(p) \leq \frac{C_\star}{\sqrt t}+O\Bigl(\frac{1}{t}\Bigr)
,
$$
the~$O(\cdot)$ being uniform in~$p$.

Collecting~(ii) and~(iii), we obtain, for all~$n$ large,
$$
\sup_{p\in[p_0,1)}\bigl(V_n(p)-W_n^{\pi_a}(p)\bigr)
\ \le\
\frac{C_\star+2\varepsilon}{\sqrt t}+O\Bigl(\frac Mt\Bigr)+C_2e^{-cn}
,
$$
so that~$\limsup_n\sqrt t\sup_{p\in[p_0,1)}(V_n(p)-W_n^{\pi_a}(p))\le C_\star+2\varepsilon$. Letting~$\varepsilon\searrow0$ and combining with Part~1 establishes~(\ref{eq:split_sharp}). Together with Part~1 applied with~$u=u_\star$, this also shows that any sequence with~$\sqrt t\,|p_n-\frac12|\to\frac{u_\star}{2}$ is least favourable.
\vspace{2mm}

\emph{Part 3: the least favourable sequences.}
Conversely, let~$(p_n)$ be an arbitrary~$[p_0,1)$-valued sequence, let~$\frac{1}{k_n}$ be a point of~$\mathcal B_{p_0}$ nearest to~$p_n$, and set~$u_n:=\sqrt t\,\Delta_{p_n}/\sigma_{p_n}$. Fix~$\varepsilon>0$ and let~$\Lambda=\Lambda(\varepsilon)$ be as in Part~2. If~$\Delta_{p_n}\ge\Lambda/\sqrt t$, then Part~2(i)--(ii) gives~$\sqrt t(V_n(p_n)-W_n^{\pi_a}(p_n))\le2\varepsilon+O(t^{-1/2})$; otherwise, the first inequality in the display of Part~2(iii), combined with~$K_\pm(p_n)\sigma_{p_n}=\gamma_{k_n}+O(t^{-1/2})$ and with the boundedness of~$h$, gives~$\sqrt t(V_n(p_n)-W_n^{\pi_a}(p_n))\le\gamma_{k_n}h(u_n)+\varepsilon+O(t^{-1/2})$. Since~$h\ge0$, both cases are covered by
$$
\sqrt t\bigl(V_n(p_n)-W_n^{\pi_a}(p_n)\bigr)
\ \le\
\gamma_{k_n}h(u_n)+2\varepsilon+O(t^{-1/2})
.
$$
Assume now that the left-hand side converges to~$C_\star$. Exactly as in Step~5(b) of the proof of Theorem~\ref{thm:plugin_sharp}, letting~$n\to\infty$ and then~$\varepsilon\searrow0$ forces~$\gamma_{k_n}h(u_n)\to C_\star$, hence~$k_n=2$ for~$n$ large and~$u_n\to u_\star$, so that~$\Delta_{p_n}=|p_n-\frac12|\to0$, $\sigma_{p_n}\to\frac12$ and~$\sqrt t\,|p_n-\frac12|=\sigma_{p_n}u_n\to\frac{u_\star}{2}$. This completes the proof of Theorem~\ref{thm:split_sharp}.
\end{proof}


\section{Proofs for Section~5}
\label{secProofsSection6}

Throughout this appendix, we write~$\mathbb P$ and~$\mathbb E$ rather than~$\mathbb P_{p_n}$ and~$\mathbb E_{p_n}$ to keep the notation light.
We start with the proof of the result in~(\ref{eq:Vn-rate}).

\begin{prop}
\label{prop:sparseVn}
Let~$(p_n)$ be a sequence in~$(0,1)$ such that~$p_n\to 0$ and~$np_n\to\infty$. Then, there exists a positive constant~$C$ such that, for all~$n$ with~$n\geq m_n:=\lceil \frac{1}{p_n}\rceil-1$ and $p_n\leq \frac12$, we have
$$
\Big|V_n(p_n)-\frac{1}{e}\Big|
\le
Cp_n
,
$$
where~$V_n(p_n)=m_np_n(1-p_n)^{m_n-1}$.
\end{prop}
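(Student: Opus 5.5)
The bound is a deterministic estimate on the scalar function $f(p):=m(p)\,p\,(1-p)^{m(p)-1}$ for $p\in(0,\frac12]$, so the sequence structure plays no role. It suffices to show that $|f(p)-e^{-1}|\le Cp$ for all $p\in(0,\frac12]$, with $C$ absolute. The standing assumption $n\ge m_n$ is used only to identify $V_n(p_n)$ with $f(p_n)$ through the sum-the-odds formula.

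The first step is to locate $mp$. Writing $m=m(p)=\lceil 1/p\rceil-1$, one has $\frac1{m+1}\le p<\frac1m$, so $x:=mp\in[1-p,1)$ and $|1-mp|\le p$. Hence $|mp-1|\le p$, which already handles the prefactor $mp$ of $f(p)=mp\,(1-p)^{m-1}$.

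The second step is to control $(1-p)^{m-1}=\exp\{(m-1)\log(1-p)\}$. For $p\le\frac12$, the expansion $\log(1-p)=-p-\rho(p)$ holds with $0\le\rho(p)\le p^2$; indeed $\rho(p)=\sum_{j\ge2}p^j/j\le p^2/(2(1-p))\le p^2$. Therefore
$$(m-1)\log(1-p)=-(m-1)p-(m-1)\rho(p)=-1+\bigl[(1-mp)+p\bigr]-(m-1)\rho(p).$$
The bracket lies in $[p,2p]$, and since $(m-1)p\le 1$ the last term lies in $[0,p]$. The exponent therefore equals $-1+\eta$ with $|\eta|\le 2p\le 1$. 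By the mean value theorem, $|e^{-1+\eta}-e^{-1}|\le e^{0}|\eta|\le 2p$.

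The final step combines the two estimates:
$$|f(p)-e^{-1}|\le |mp-1|\,(1-p)^{m-1}+|(1-p)^{m-1}-e^{-1}|\le p+2p=3p.$$
This gives the claim with $C=3$.

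No step is a genuine obstacle. The only point requiring care is to keep the constants uniform over $p\in(0,\frac12]$, in particular when $m=1$ (that is, $p=\frac12$). In that case $(1-p)^{m-1}=1$, and the estimates above still hold because $|\eta|\le 1$ there. The hypotheses $p_n\to0$ and $np_n\to\infty$ are not needed for the inequality itself. They only ensure that $p_n\le\frac12$ and $n\ge m_n$ for all $n$ large enough, which yields the consequence $V_n(p_n)\to1/e$.
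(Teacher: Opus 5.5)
Your proof is correct and takes the same route as the paper. Both write $mp(1-p)^{m-1}-e^{-1}=(mp-1)(1-p)^{m-1}+\bigl((1-p)^{m-1}-e^{-1}\bigr)$, use $0\le 1-mp\le p$, and control the exponent $(m-1)\log(1-p)$ through the series for $\log(1-p)$ and the mean value theorem. The only difference is that you track the sign of $\eta\in[0,2p]$, which keeps the mean-value factor at most $1$ and gives $C=3$ instead of the paper's $C=1+6e^{2}$.
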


\begin{proof}
Fix $n$ such that $n\ge m_n$ and $p_n\le \frac12$. Write $p:=p_n\in(0,\tfrac12]$ and
$m:=m_n=\lceil \tfrac1p\rceil-1$. Since $n\ge m$, the win probability of the \mbox{$p$-oracle} rule is
$V_n(p)=mp(1-p)^{m-1}$.
Using $m=\lceil \tfrac1p\rceil-1$, we obtain
\[
m<\frac1p\le m+1,
\qquad\text{hence}\ \
mp<1\le (m+1)p
.
\]
This yields
\begin{equation}\label{eq:mp-close-1}
0\le 1-mp\le p,
\qquad\text{so that}\ \
|mp-1|\le p.
\end{equation}

\smallskip
We first control $(1-p)^{m-1}$ around $e^{-1}$. For $p\in(0,\tfrac12]$, define
\[
g(p):=\frac{\log(1-p)}{p}.
\]
Using $\log(1-p)=-\sum_{k=1}^\infty\frac{p^k}{k}$, we obtain
\[
0\le -(\log(1-p)+p)
=
\sum_{k=2}^\infty
\frac{p^k}{k}
\le 
\sum_{k=2}^\infty
p^k=\frac{p^2}{1-p}\le 2p^2,
\]
hence
\begin{equation}\label{eq:g-close-1}
|g(p)+1|=\frac{|\log(1-p)+p|}{p}\le 2p,\qquad p\in(0,\tfrac12].
\end{equation}
Also, since $-p/(1-p)\le \log(1-p)\le -p$ for $p\in(0,1)$, we have $-2\le g(p)\le -1$ for $p\in(0,\tfrac12]$.

Now set $\alpha:=p(m-1)=mp-p$. By \eqref{eq:mp-close-1}, $mp\in[1-p,1)$, hence
$\alpha\in[1-2p,1-p)$ and therefore
\begin{equation}\label{eq:alpha-close-1}
|\alpha-1|\le 2p.
\end{equation}
Since $\log(1-p)=pg(p)$, we can write
\[
(1-p)^{m-1}=\exp\!\big((m-1)\log(1-p)\big)=\exp(\alpha g(p)).
\]
Combining \eqref{eq:g-close-1}--\eqref{eq:alpha-close-1} with $|g(p)|\le 2$ for $p\le \tfrac12$, we get
\begin{equation}\label{eq:exp-arg}
|\alpha g(p)+1|
\le |\alpha-1|\,|g(p)|+|g(p)+1|
\le 4p+2p
\le 6p.
\end{equation}
Therefore, using $|e^x-1|\le e^{|x|}|x|$ (this follows from the mean value theorem),
\begin{align}
|(1-p)^{m-1}-e^{-1}|
&=e^{-1}|e^{\alpha g(p)+1}-1|
\le e^{-1}e^{|\alpha g(p)+1|}\,|\alpha g(p)+1|
\nonumber\\
&\le e^{-1}e^{6p}\,6p
\le 6e^{2}p,
\label{eq:pow-rate}
\end{align}
where we used $p\le \tfrac12$ so that $e^{6p}\le e^3$.

Finally,
\[
|V_n(p)-e^{-1}|
\le |mp-1|\,(1-p)^{m-1}+|(1-p)^{m-1}-e^{-1}|
\le p+6e^{2}p
\le Cp
\]
for $p\in(0,\tfrac12]$, with $C:=1+6e^{2}$. Returning to $p=p_n$ yields the claim.
\end{proof}


We first state and prove the uniform law of large numbers announced in Section~\ref{sec:sparse}.

\begin{lem}
\label{lem:uniform-lln-horizon}
Let~$(p_n)$ be a sequence in~$(0,1)$ such that~$p_n\to 0$ and~$np_n\to\infty$.
Consider
\begin{equation}
\label{eq:In-def}
I_n:=\Bigl\{t\in\{1,\dots,n\}:\ t\ge t_n \Bigr\},
\end{equation}
where~$t_n\in\{1,\ldots,n\}$ for any~$n$ and~$n/t_n=O(1)$. 
Then, there exist positive constants~$C,c$ such that
\begin{equation}
\label{eq:uniform-lln}
\mathbb P
\bigg[
\sup_{t\in I_n}\Bigl|\frac{\hat p_t}{p_n}-1\Bigr| > \varepsilon
\bigg]
\le
C
e^{-c\varepsilon^2 np_n}
\end{equation}
for all~$\varepsilon\in(0,1)$ and all~$n$ large enough.
\end{lem}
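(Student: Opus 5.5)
We need to bound $\sup_{t\ge t_n}|S_t - tp_n|/(tp_n)>\varepsilon$. Since $t\ge t_n\ge n/K$ for some constant $K$ (as $n/t_n=O(1)$), the event is contained in $\{\max_{t\le n}|S_t-tp_n|>\varepsilon t_np_n\}$. The centred sums $M_t:=S_t-tp_n$ form a martingale with increments bounded by $1$ in absolute value and with predictable quadratic variation $\sum_{i\le t}\mathrm{Var}(X_i)=tp_n(1-p_n)\le np_n$. The plan is to apply Freedman's inequality, as announced in Section~\ref{sec:sparse}, to $M$ and to $-M$, and then to use the maximal form of that inequality, which already controls $\max_{t\le n}$.

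Freedman's inequality states that for a martingale with increments bounded by $1$,
\[
\mathbb P\bigl[\exists t\le n:\ |M_t|\ge x,\ \langle M\rangle_t\le\sigma^2\bigr]\le 2\exp\Bigl(-\frac{x^2}{2(\sigma^2+x/3)}\Bigr).
\]
Here $\langle M\rangle_t\le np_n=:\sigma^2$ holds deterministically, so the side condition is automatic. We take $x=\varepsilon t_np_n\ge \varepsilon np_n/K$. Then $x\le np_n$ because $\varepsilon<1$, so the denominator is at most $2(1+1/3)np_n$. The exponent is therefore at least $\varepsilon^2(np_n)^2/K^2\,/\,(\tfrac83 np_n)=c\,\varepsilon^2 np_n$ with $c=3/(8K^2)$. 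This yields the bound with $C=2$, valid for all $n$ such that $t_n\ge n/K$, that is, for all $n$ large enough.

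The only delicate point is the reduction from relative to absolute deviations uniformly in $t$. For each $t\in I_n$ we have $|\hat p_t/p_n-1|>\varepsilon$ if and only if $|M_t|>\varepsilon tp_n\ge\varepsilon t_np_n$, which is why the single level $x=\varepsilon t_np_n$ suffices. The hypotheses $p_n\to0$ and $np_n\to\infty$ are not actually needed for the inequality itself; they only make the bound nontrivial. One could alternatively use Bernstein's inequality with a union bound over $t$, but that would produce a spurious factor $n$. Using the maximal Freedman (or Doob plus exponential supermartingale) form avoids this factor, and I expect this to be the main step to state carefully.
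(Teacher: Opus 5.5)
Your proposal is correct and matches the paper's proof essentially step for step. Both reduce the event to $\{\max_{t\le n}|M_t|>\varepsilon t_np_n\}$ and apply the maximal Freedman inequality to $\pm M$ with $Q_n\le np_n$. Both then conclude from $n/t_n=O(1)$; your explicit constant $c=3/(8K^2)$, obtained from $x\le np_n$, is just a concrete version of the paper's final simplification.
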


We turn to its proof, which is needed to establish Theorem~\ref{thm:plugin-eff-sparse}.

\begin{proof}[Proof of Lemma~\ref{lem:uniform-lln-horizon}]
Fix $\varepsilon\in(0,1)$. Define the martingale
$$
M_t:=S_t-tp_n=\sum_{i=1}^t (X_i-p_n),\qquad t=0,1,\dots,n,
$$
with respect to $\mathcal F_t=\sigma(X_1,\dots,X_t)$.
Its increments satisfy $|M_t-M_{t-1}|=|X_t-p_n|\le 1$ a.s., and its predictable quadratic variation is
\begin{equation}
\label{Vn}
Q_t:=\sum_{i=1}^t \mathbb E\bigl[(X_i-p_n)^2| \mathcal F_{i-1}\bigr]
= tp_n(1-p_n)
.
\end{equation}
Freedman's inequality\footnote{We use the convenient maximal form stated as Theorem~1.1 in \cite{Tropp2011FreedmanMatrix}.} yields, for any $a>0$,
$$
\mathbb P\bigg[\max_{1\le t\le n} M_t \ge a\bigg]
\le
\exp\Bigl(-\frac{a^2}{2(Q_n+a/3)}\Bigr),
$$
Since the same argument shows that, for any $a>0$, 
$$
\mathbb P\bigg[\max_{1\le t\le n} (-M_t) \ge a\bigg]
\le
\exp\Bigl(-\frac{a^2}{2(Q_n+a/3)}\Bigr),
$$
we obtain that, still for any $a>0$
\begin{equation}
\label{maxbound}
\mathbb P\bigg[\max_{1\le t\le n} |M_t| \ge a\bigg]
\le
2\exp\Bigl(-\frac{a^2}{2(Q_n+a/3)}\Bigr)
.
\end{equation}

On the event $\{\sup_{t\in I_n}|\hat p_t-p_n|>\varepsilon p_n\}$, there exists $t\in I_n$ such that
$$
|S_t-tp_n|
=
t|\hat p_t-p_n|
>
\varepsilon t p_n
\ge
\varepsilon t_n p_n
.
$$
In other words,
$$
\bigg\{
\sup_{t\in I_n}|\hat p_t-p_n|>\varepsilon p_n
\bigg\}
\subseteq
\bigg\{
\max_{1\le t\le n}|M_t|>\varepsilon t_n p_n
\bigg\}
.
$$
Combining this inclusion with the maximal deviation bound in~(\ref{maxbound}) and using the fact that~$Q_n\leq np_n$ (see~(\ref{Vn})) yields
$$
\mathbb P
\bigg[
\sup_{t\in I_n}\Bigl|\frac{\hat p_t}{p_n}-1\Bigr| > \varepsilon
\bigg]
\le
2\exp
\bigg(
\!
-\frac{\varepsilon^2 t_n^2 p_n^2}{2\bigl(n p_n+\varepsilon t_n p_n/3\bigr)}
\bigg)
\le
2\exp
\bigg(
\!
-\frac{\varepsilon^2 np_n}{2\bigl(n^2/t_n^2 +n/(3t_n)\bigr)}
\bigg)
.
$$
Since~$n/t_n=O(1)$, the result follows.
\end{proof}


We can now prove Theorem~\ref{thm:plugin-eff-sparse}.

\begin{proof}[Proof of Theorem~\ref{thm:plugin-eff-sparse}]
Fix a sequence~$(p_n)$ as in the statement of the theorem and let
\begin{equation}
\label{choiceeps}
\varepsilon_n
:=
\min\bigg\{\frac12,\ \sqrt{\frac{K\log(np_n)}{np_n}}\bigg\}
,
\end{equation}
where the positive constant~$K$ will be chosen later. Note that~$\varepsilon_n\in(0,\tfrac 12]$ 
and $\varepsilon_n\to 0$.
Define the second-half and near-horizon windows
$$
I_n^-:=\biggl\{t\in\{1,\dots,n\}:\ t\ge h_n
:=
\Bigl\lceil \frac{n}{2}\Bigr\rceil+1
\biggr\}
$$
and
$$
I_n^+:=\biggl\{t\in\{1,\dots,n\}:\ t\ge
t_n:= n-\biggl\lfloor \frac{2}{(1-\varepsilon_n)p_n}\biggr\rfloor\biggr\},
$$
along with the corresponding events
$$
E_n^-
:=
\bigg\{
\sup_{t\in I_n^-}\Bigl|\frac{\hat p_t}{p_n}-1\Bigr|\le \varepsilon_n
\bigg\}
\quad
\textrm{ and }
\quad
E_n^+
:=
\bigg\{
\sup_{t\in I_n^+}\Bigl|\frac{\hat p_t}{p_n}-1\Bigr|\le \varepsilon_n
\bigg\}.
$$
Further consider the deterministic times
$$
t^-_n
:=
n+1-\bigg\lceil \frac{1}{(1-\varepsilon_n)p_n}\bigg\rceil
\quad
\textrm{ and }
\quad
t^+_n
:=
n+2-\bigg\lfloor \frac{1}{(1+\varepsilon_n)p_n}\bigg\rfloor
.
$$
Note that~$h_n<t_n<t_n^-<t_n^+<n$ for all~$n$ large enough. 
\vspace{2mm}

We first show that 
\begin{equation}
\label{eq:tau-lower}
E_n^-
\subseteq 
\{\hat\tau_n\ge t_n^-\}
\quad
\textrm{for all }n \textrm{ large enough}
\end{equation}
and that
\begin{equation}
\label{eq:tau-upper}
E_n^+\cap A_n
\subseteq 
\Bigl\{
\hat{\tau}_n
=
\inf\bigl\{t\in\{t^+_n,\dots,n\}: X_t=1\bigr\}
\Bigr\}
,
\end{equation}
where we let~$A_n:=\{\hat\tau_n \ge t_n^+\}$.
In other words, on~$E_n^-$, the plug-in rule stops no earlier than~$t_n^-$ for all~$n$ large enough, whereas, on~$E_n^+$, if it has not stopped yet at~$t^+_n$, then it will stop at the first success in
$\{t^+_n,\dots,n\}$ (if any). 
\vspace{3mm}

\emph{Proof of~(\ref{eq:tau-lower}).}
It follows from~(\ref{noearlystop}) that  
\begin{equation}
\label{eq:no-stop-before-half}
\hat\tau_n\in I_n^- \cup \{+\infty\}
\quad
\textrm{  almost surely.} 
  \end{equation}
Fix then~$t\in I_n^-$ with $t<t_n^-$. Then,
$$
n-t+1>n-t_n^-+1
=
\biggl\lceil\frac{1}{(1-\varepsilon_n)p_n}\biggr\rceil
\ge \frac{1}{(1-\varepsilon_n)p_n},
$$
so that, on $E_n^-$,
$$
\frac{1}{n-t+1}<(1-\varepsilon_n)p_n\le \hat p_t.
$$
This shows that, on~$E_n^-$, the stopping condition cannot hold at any $t\in I_n^-$ with $t<t_n^-$.
Together with \eqref{eq:no-stop-before-half}, this establishes~(\ref{eq:tau-lower}).
\vspace{3mm}


\emph{Proof of~(\ref{eq:tau-upper}).}
For any~$t\geq t^+_n$, we have 
$$
n-t+1
\leq
n-t_n^+ +1
=
\bigg\lfloor \frac{1}{(1+\varepsilon_n)p_n} \bigg\rfloor -1
<
\frac{1}{(1+\varepsilon_n)p_n}
.
$$
Therefore, on~$E_n^+$, such~$t$ provide
$$
\hat p_{t}
\leq
(1+\varepsilon_n)p_n 
< \frac{1}{n-t+1}
.
$$
This shows that, on~$E_n^+\cap A_n$, the rule will stop at the first success in $\{t^+_n,\dots,n\}$ (if any), which establishes~(\ref{eq:tau-upper}).
\vspace{3mm}


We can now proceed with the proof (in the rest of the proof, $C$ and~$c$ are absolute constants that may change from line to line). 
With~$D_n:=\{\text{plug-in wins}\}$, write 
\begin{eqnarray*}
W_n(p_n)
&=&
\mathbb P[D_n|E_n^+\cap A_n ]
\mathbb P[ E_n^+\cap A_n ]
+
\mathbb P[D_n|(E_n^+\cap A_n)^c ]
\mathbb P[ (E_n^+\cap A_n)^c ]
\nonumber 
\\[2mm]
&=&
\mathbb P[D_n|E_n^+\cap A_n ]
+
(
\mathbb P[D_n|(E_n^+\cap A_n)^c ]
-
\mathbb P[D_n|E_n^+\cap A_n ]
)
\mathbb P[ (E_n^+\cap A_n)^c ]
.
\end{eqnarray*}
Since conditional win probabilities are in $[0,1]$, this yields the deterministic bound
\begin{equation}
\label{eq:Wn-decomp-bound}
|W_n(p_n)-\mathbb P[D_n|E_n^+\cap A_n]|
\le
\mathbb P[(E_n^+\cap A_n)^c]
\le
\mathbb P[(E_n^+)^c]+\mathbb P[A_n^c]
.
\end{equation}

From \eqref{eq:tau-upper}, on $E_n^+\cap A_n$ the plug-in rule wins if and only if there is exactly one success in the
residual block $\{t_n^+,\dots,n\}$. Denoting as $m_n^+:=n-t_n^++1$ the length of this block and letting
$N_n\sim \mathrm{Bin}(m_n^+,p_n)$, we thus have
\begin{equation}
\label{eq:success-given-An}
\mathbb P[D_n|E_n^+\cap A_n ]
=
\mathbb P[ N_n=1 ]
=
m_n^+p_n(1-p_n)^{m_n^+-1}
.
\end{equation}

We now compare the right-hand side of \eqref{eq:success-given-An} to $1/e$.
Set $\alpha_n:=m_n^+p_n$. Since
$$
m_n^+=n-t_n^+ + 1 
=
\biggl\lfloor \frac{1}{(1+\varepsilon_n)p_n}\bigg\rfloor-1,
$$
we have, for all $n$ large enough,
$$
\frac{1}{1+\varepsilon_n}-2p_n
\le
\alpha_n
\le
\frac{1}{1+\varepsilon_n}-p_n
,
$$
and therefore
\begin{equation}
\label{eq:alpha-close}
\bigg|\alpha_n-\frac{1}{1+\varepsilon_n}\bigg|
\le
2p_n
.
\end{equation}
In particular, $\alpha_n\to 1$. Using the bound $-p_n/(1-p_n)\le \log(1-p_n)\le -p_n$ for $p_n\in(0,1)$, we obtain
$$
\exp\bigg(\!\!-\frac{(m_n^+-1)p_n}{1-p_n}\bigg)
\le
(1-p_n)^{m_n^+-1}
\le
\exp(-(m_n^+-1)p_n)
.
$$
%
%
Since~$m_n^+p_n^2=\alpha_np_n\to 0$ and~$e^x-1\leq 2x$ for~$x\in[0,1]$, we thus have
$$
|(1-p_n)^{m_n^+-1}-e^{-(m_n^+-1)p_n}|
\le
e^{-(m_n^+-1)p_n}\bigg(\exp\bigg(\frac{(m_n^+-1)p_n^2}{1-p_n}\bigg)-1\bigg)
\le
2 p_n
$$
for all~$n$ large enough. Moreover, since $\alpha_n=(m_n^+-1)p_n+p_n$, we similarly have
$$
|e^{-(m_n^+-1)p_n}-e^{-\alpha_n}|
=
e^{-\alpha_n}\big|e^{p_n}-1\big|
\le
2 p_n
.
$$
Thus, for all~$n$ large enough, we have
\begin{equation}
\label{eq:pow-approx}
|(1-p_n)^{m_n^+-1}-e^{-\alpha_n}|
\le
4 p_n
.
\end{equation}

Next, the map $x\mapsto xe^{-x}$ is Lipschitz on $[\frac{1}{4},2]$, and for all~$n$ large enough we have
$\alpha_n\in[\frac{1}{4},2]$. Thus, letting
$$
f(\varepsilon):=\frac{1}{1+\varepsilon}\exp\Big(\!-\frac{1}{1+\varepsilon}\Big),
$$
we have for all~$n$ large enough
$$
|\alpha_n e^{-\alpha_n}-f(\varepsilon_n)|
\le
C\bigg|\alpha_n-\frac{1}{1+\varepsilon_n}\bigg|
\le
C p_n
,
$$
where we used~\eqref{eq:alpha-close}. Since the fact that~$f$ is~$C^1$ on~$[0,\tfrac 12]$ yields
$
|f(\varepsilon_n)-e^{-1}|
\le
C\varepsilon_n
$,
we thus have
$$
|\alpha_n e^{-\alpha_n}-e^{-1}|
\le
|\alpha_n e^{-\alpha_n}-f(\varepsilon_n)|
+
|f(\varepsilon_n)-e^{-1}|
\leq
Cp_n+C\varepsilon_n
$$
for all~$n$ large enough. Using \eqref{eq:pow-approx} and $\alpha_n\le 2$, it follows that, for all~$n$ large enough,
\begin{eqnarray}
\bigg|\mathbb P[D_n|E_n^+\cap A_n]-\frac{1}{e}\bigg|
\!\!&\!\! \leq \!\! &\!\!
\Big|
m_n^+p_n(1-p_n)^{m_n^+-1}-\frac{1}{e}
\Big|
\nonumber
\\[2mm]
\!\!&\!\! \leq \!\! &\!\!
|\alpha_n((1-p_n)^{m_n^+-1}-e^{-\alpha_n})|
+
|(\alpha_n e^{-\alpha_n}-e^{-1})|
\nonumber
\\[2mm]
\!\!&\!\! \leq \!\! &\!\!
Cp_n+C\varepsilon_n
.
\label{eq:mainterm-rate}
\end{eqnarray}

It remains to control $\mathbb P[(E_n^+)^c]+\mathbb P[A_n^c]$ in \eqref{eq:Wn-decomp-bound}.
By Lemma~\ref{lem:uniform-lln-horizon} applied to $I_n^+$ (note that $n/t_n=O(1)$),
we have for all $n$ large enough
$$
\mathbb P[(E_n^+)^c]
\le
C
e^{-c\varepsilon_n^2 np_n}
.
$$

It remains to control~$\mathbb P[A_n^c]$. From~(\ref{eq:tau-lower}), we have, for all $n$ large enough, that
\begin{eqnarray*}
\lefteqn{
\hspace{-10mm}
E_n^-\cap A_n^c
=
E_n^-\cap \{\hat\tau_n < t_n^+\}
\subseteq
E_n^-\cap \{t_n^-\leq  \hat\tau_n \leq t_n^+-1\}
}
\\[2mm]
& & 
\hspace{1mm}
\subseteq
\big\{
\exists\,t\in\{t_n^-,\dots,t_n^+-1\}\text{ with }X_t=1
\big\}
\subseteq
\Big\{
{\textstyle{\sum_{t=t_n^-}^{t_n^+-1} X_t \ge 1}}
\Big\}.
\end{eqnarray*}
By Markov's inequality,
$$
\mathbb P[E_n^-\cap A_n^c]
\le
\mathbb P\big[
{\textstyle{\sum_{t=t_n^-}^{t_n^+-1} X_t \ge 1}}
\big]
\le
\mathbb E\big[
{\textstyle{\sum_{t=t_n^-}^{t_n^+-1} X_t}}
\big]
=
(t_n^+ - t_n^-)\,p_n.
$$
Consequently,
$$
\mathbb P[A_n^c]
\le
(t_n^+ - t_n^-)p_n
+
\mathbb P[(E_n^-)^c]
.
$$
Since
$$
t_n^+ - t_n^-
=
\bigg\lceil \frac{1}{(1-\varepsilon_n)p_n}\bigg\rceil
-\bigg\lfloor \frac{1}{(1+\varepsilon_n)p_n}\bigg\rfloor + 1
=
\frac{2\varepsilon_n}{(1-\varepsilon_n^2)p_n} + O(1)
,
$$
we have
$$
(t_n^+ - t_n^-)p_n
\le
C \varepsilon_n
+
C p_n
.
$$
Lemma~\ref{lem:uniform-lln-horizon} applied to $I_n^-$ yields
$$
\mathbb P[(E_n^-)^c]
\le
C e^{-c\varepsilon_n^2 np_n}
.
$$
Therefore,
$$
\mathbb P[A_n^c]
\le
C \varepsilon_n
+
C p_n
+
C e^{-c\varepsilon_n^2 np_n}
.
$$
Plugging these bounds into \eqref{eq:Wn-decomp-bound} and combining with \eqref{eq:mainterm-rate} yields, for all $n$
large enough,
\begin{equation}
\label{eq:Wn-rate-final}
\bigg|W_n(p_n)-\frac{1}{e}\bigg|
\le
C\varepsilon_n + Cp_n
+
C e^{-c\varepsilon_n^2 np_n}
.
\end{equation}

Finally, if one picks~$K$ in~(\ref{choiceeps}) so large that~$cK\geq 1$, we have for all~$n$ large enough
$$
e^{-c\varepsilon_n^2 np_n}
\le
\frac{1}{np_n}
\le
C\,\sqrt{\frac{\log(np_n)}{np_n}}
.
$$
Hence, \eqref{eq:Wn-rate-final} gives
$$
\bigg|W_n(p_n)-\frac{1}{e}\bigg|
\le
C_1\sqrt{\frac{\log(np_n)}{np_n}}+C_2p_n
,
$$
for all $n$ large enough.

Moreover, since $n\ge m_n$ and~$p_n\leq 1/2$ for $n$ large enough in this regime, we have (see~(\ref{eq:Vn-rate}))
$$
\bigg|V_n(p_n)-\frac{1}{e}\bigg|
\le
C p_n
,
$$
which finally yields
$$
0
\leq
V_n(p_n)-W_n(p_n)
\le
\bigg|V_n(p_n)-\frac{1}{e}\bigg|+\bigg|W_n(p_n)-\frac{1}{e}\bigg|
\le
C_1\sqrt{\frac{\log(np_n)}{np_n}}+C_2p_n
,
$$
after renaming constants. This completes the proof.
\end{proof}


\begin{proof}[Proof of Theorem~\ref{thm:uniform-etan}]
A necessary condition for a rule to win is that there is at least one success in~$\{1,\dots,n\}$. Thus, 
$
\max(
V_n(p),W_n(p)
)
\le \mathbb P_p[
\,
{\textstyle{\sum_{i=1}^n X_i\ge 1}}
]
\le 
\mathbb E_p
[
\,
{\textstyle{\sum_{i=1}^n X_i}}
]
=
np
$, by Markov's inequality.
It follows directly that
$$
\sup_{p\in(0,\tilde{p}_n]} 
\big(V_n(p)-W_n(p)\big) 
\le
\sup_{p\in(0,\tilde{p}_n]} W_n(p)+\sup_{p\in(0,\tilde{p}_n]} V_n(p)
\le 2n\tilde{p}_n\to 0
.
$$
Therefore, it is sufficient to show that
\begin{equation}
\label{eq:uniform-etan-11B}
\lim_{n\to\infty}
\sup_{p\in[p_n,1)} 
\big(V_n(p)-W_n(p)\big) 
=
 0.
\end{equation}

Assume \emph{ad absurdum} that~\eqref{eq:uniform-etan-11B} fails. Then, there exist
$\varepsilon>0$, a subsequence $(n_k)$ and numbers $r_k\in[p_{n_k},1)$ such that
\begin{equation}
\label{subs}
V_{n_k}(r_k)-W_{n_k}(r_k)
\ge \varepsilon
\qquad\text{for all }k.
\end{equation}
By compactness of $[0,1]$, up to extracting a further subsequence we may assume that $(r_k)$ converges in~$[0,1]$. Denote the limit as~$p_\star$. We consider two cases.
\vspace{3mm}

\emph{Case (a): $p_\star\in(0,1]$.}
Let $p_0:=p_\star/2\in(0,1)$. Then, $r_k\in[p_0,1)$ for all large $k$, so that Theorem~\ref{thm:finite_oracle_ineq_p0} entails that $$
V_{n_k}(r_k)-W_{n_k}(r_k)
\leq
\sup_{p\in[p_0,1)} 
\big(V_{n_k}(p)-W_{n_k}(p)\big)
\to 0
$$
as~$k$ diverges to infinity. This contradicts~(\ref{subs}).
\vspace{3mm}

\emph{Case (b): $p_\star=0$.}
Then $r_k\to 0$ and, since $r_k\ge p_{n_k}$ for any~$k$, we have~$n_k r_k \ge n_k p_{n_k}\to\infty$. Therefore, applying Theorem~\ref{thm:plugin-eff-sparse} along the subsequence~$n=n_k$ with success probability~$r_k$, yields
$
V_{n_k}(r_k)-W_{n_k}(r_k)
\to 0,
$
which again contradicts~(\ref{subs}).
\vspace{3mm}

Since both cases lead to a contradiction, \eqref{eq:uniform-etan-11B} holds, and the result is proved.
\end{proof}


\begin{proof}[Proof of Theorem~\ref{thm:no_uniform_abs}]
Let $(\pi_n)$ be an arbitrary sequence of  (possibly randomized) rules. We realize the possible internal randomization by an auxiliary variable $U$ independent of the $X_t$'s,
and we write $\tau_n=\tau_n(X_1,\dots,X_n;U)$ for the corresponding (possibly randomized) stopping time. \emph{Ad absurdum}, assume that~(\ref{eq:assumption_gap}) holds.

For $t\in\{1,\dots,n\}$, define
$$
\beta_{n,t}
:=
\mathbb P[\tau_n=t|B_{n,t}]
,
\quad
\textrm{ with }
B_{n,t}
:=
\{X_1=\cdots=X_{t-1}=0,\ X_t=1\}
,
$$
where the probability is over the internal randomization~$U$.
Fix~$c\in(0,1)$ and let~$p_n:=c/n$.
Since $p_n<1/n$, the oracle threshold index equals $s_n(p_n)=1$, so that the oracle stops at the first success (if any) and wins if and only if~$S_n=1$. Hence,
\begin{equation}
\label{eq:V_pcn}
V_n(p_n)
=
\mathbb P_{p_n}[S_n=1]=n p_n(1-p_n)^{n-1}
.
\end{equation}
Now, for all $t\in\{1,\dots,n\}$,
$$
A_{n,t}:=\{X_1=\cdots=X_{t-1}=0,\ X_t=1,\ X_{t+1}=\cdots=X_n=0\}
$$
satisfies $\mathbb P_{p_n}[A_{n,t}]=p_n(1-p_n)^{n-1}$, and on $A_{n,t}$ the rule~$\pi_n$ wins if and only if it stops at time~$t$. Because $\{\tau_n=t\}$ is measurable with respect to~$\sigma(X_1,\dots,X_t,U)$ and $\{X_{t+1}=\cdots=X_n=0\}$ is independent
of $\sigma(X_1,\dots,X_t,U)$ under $\mathbb P_{p_n}$, we have
$$
\mathbb P_{p_n}[\tau_n=t, A_{n,t}]
=
\mathbb P_{p_n}[\tau_n=t|B_{n,t}]\,\mathbb P_{p_n}[A_{n,t}]
=\beta_{n,t}\,p_n(1-p_n)^{n-1}.
$$
Therefore,
$$
\mathbb P_{p_n}[\text{$\pi_n$ wins and }S_n=1]
=\sum_{t=1}^n \mathbb P_{p_n}[\tau_n=t, A_{n,t}]
=p_n(1-p_n)^{n-1}\sum_{t=1}^n\beta_{n,t}
.
$$
Also, $\{\pi_n\text{ wins and }S_n\ge2\}\subseteq\{S_n\ge2\}$, so
\begin{equation}\label{eq:W_upper_pcn}
W_n^{\pi_n}(p_n)
\le
p_n(1-p_n)^{n-1}\sum_{t=1}^n\beta_{n,t}+\mathbb P_{p_n}[S_n\ge2].
\end{equation}
Combining \eqref{eq:V_pcn}--\eqref{eq:W_upper_pcn} gives
\begin{eqnarray}
V_n(p_n)-W_n^{\pi_n}(p_n)
\!\!&\!\! \geq \!\! &\!\!
p_n(1-p_n)^{n-1}\gamma_n-\mathbb P_{p_n}[S_n\ge2]
\nonumber
\\[2mm]
\!\!&\!\! = \!\! &\!\!
p_n(1-p_n)^{n-1}\gamma_n
-
\{1-(1-p_n)^n-np_n(1-p_n)^{n-1}\}
,
\label{eq:gap_lower_pcn}
\end{eqnarray}
where we let
$$
\gamma_n
:=
\sum_{t=1}^n(1-\beta_{n,t})\in[0,n]
.
$$
Note that since~$p_n=c/n$, 
\begin{equation}\label{eq:binom_limits_updated}
n p_n(1-p_n)^{n-1}\to c e^{-c}
\quad
\textrm{ and }
\quad
\mathbb P_{p_n}[S_n\ge2]\to 1-(1+c)e^{-c}.
\end{equation}

Assume for a moment that 
$$
\eta
:=
\limsup_{n\to\infty} \frac{\gamma_n}{n}
>
0
.
$$
Then, there exists a subsequence $(n_k)$
 such that $\gamma_{n_k}/n_k\to\eta>0$. Along this subsequence, \eqref{eq:gap_lower_pcn}--\eqref{eq:binom_limits_updated} yield
\begin{eqnarray}
\liminf_{k\to\infty}
\big(
V_{n_k}(p_{n_k})-W_{n_k}^{\pi_{n_k}}(p_{n_k})
\big)
\!\!&\!\! \geq \!\! &\!\!
\eta c e^{-c}-\bigl(1-(1+c)e^{-c}\bigr)
\nonumber
\\[2mm]
\!\!&\!\! = \!\! &\!\!
(1+(1+\eta)c)e^{-c}-1
.
\label{eq:liminf_gap_pcn}
\end{eqnarray}
Fix~$c\in(0,1)$ such that the right-hand side is strictly positive
(since~$\eta>0$, such a~$c$ exists).
%
Then \eqref{eq:liminf_gap_pcn} implies
$$
\liminf_{k\to\infty}
\sup_{p\in(0,1)}
\big(V_n(p)-W_n^{\pi_n}(p)\big)
 \ge
\liminf_{k\to\infty}
\big(V_{n_k}(p_{n_k})-W_{n_k}^{\pi_{n_k}}(p_{n_k})
\big)
 > 0
 .
$$
Since this contradicts~\eqref{eq:assumption_gap}, we must have
\begin{equation}\label{eq:dn_over_n_to_0_updated}
\frac{\gamma_n}{n}\to 0
.
\end{equation}

Let now $p_n:=d/n$ with~$d>1$, and define the event
$$
F_n:=\{\text{$\pi_n$ does not stop at the first success (if any)}\}.
$$
On $B_{n,t}$, the rule~$\pi_n$ stops at time $t$ with conditional probability $\beta_{n,t}$, so that
$$
\mathbb P_{p_n}[F_n\cap B_{n,t}]
=(1-\beta_{n,t})\,\mathbb P_{p_n}[B_{n,t}].
$$
Summing over $t$ and using $\mathbb P_{p_n}[B_{n,t}]=p_n(1-p_n)^{t-1}\le p_n$ gives
\begin{equation}
\label{eq:prob_Fn}
\mathbb P_{p_n}[F_n]\le p_n\sum_{t=1}^n(1-\beta_{n,t})=p_n \gamma_n=\frac{d}{n}\gamma_n.
\end{equation}
If $S_n\ge2$ and the rule~$\pi_n$ wins, then it must have skipped the first success, so that
$\{\pi_n\text{ wins and }S_n\ge2\}\subseteq F_n$. Therefore, \eqref{eq:prob_Fn} yields
\begin{equation}\label{eq:W_le_Sn1_plus_F_updated}
W_n^{\pi_n}(p_n)
\le \mathbb P_{p_n}[\pi_n\text{ wins and }S_n=1]
+
\mathbb P_{p_n}[F_n]
\leq
\mathbb P_{p_n}[S_n=1]
+
\frac{d}{n}\gamma_n
.
\end{equation}
Since~$p_n=d/n$, we have
$$
\mathbb P_{p_n}[S_n=1]
=
np_n(1-p_n)^{n-1}\to d e^{-d}
,
$$
so that \eqref{eq:dn_over_n_to_0_updated} yields
\begin{equation}
\label{eq:limsup_W_d_over_n}
\limsup_{n\to\infty} 
\,
W_n^{\pi_n}(d/n)\le d e^{-d}.
\end{equation}

Now, for $n\ge m(p)=\lceil 1/p\rceil-1$, the win probability of the \mbox{$p$-oracle} rule is
$V_n(p)=m(p)p(1-p)^{m(p)-1}$.
For~$p_n=d/n$, letting $m_n:=m(p_n)=\lceil n/d\rceil-1$, we have
\begin{equation}\label{eq:V_d_over_n_limit_updated}
V_n(p_n)
=
m_np_n(1-p_n)^{m_n-1}\to e^{-1}.
\end{equation}
Combining \eqref{eq:limsup_W_d_over_n}--\eqref{eq:V_d_over_n_limit_updated} gives
$$
\liminf_{n\to\infty}
\big(V_n(p_n)-W_n^{\pi_n}(p_n)\big)
 \ge
  e^{-1}-d e^{-d}
,
$$
 hence
$$
\liminf_{n\to\infty}
\sup_{p\in(0,1)}
\big(V_n(p)-W_n^{\pi_n}(p)\big)
 \ge
\liminf_{n\to\infty}
\big(V_n(d/n)-W_n^{\pi_n}(d/n)\big)
 \ge
  e^{-1}-d e^{-d}
  .
$$
Since~$d>1$ implies that~$e^{-1}-d e^{-d}>0$, this contradicts~\eqref{eq:assumption_gap}. \end{proof}


\end{appendix}


\subsection*{Acknowledgments}
		Davy Paindaveine is also affiliated at the Toulouse School of Economics, Université Toulouse 1 Capitole.
%

\subsection*{Funding}

Davy Paindaveine was supported by the ``Projet de Recherche'' T.0230.24 from the FNRS (Fonds National pour la Recherche Scientifique), Communauté Fran\-çaise de Belgique.
\bibliographystyle{chicago} 
\bibliography{Paper}       

\end{document}